\renewcommand{\eqref}[1]{(\ref{#1})}   
\theoremstyle{plain}
\newtheorem{theorem}{Theorem}[section]
\newtheorem{theorema}{Theorem}
\newtheorem{corollary}{Corollary}[section]
\newtheorem{lemma}{Lemma}[section]
\newtheorem{proposition}{Proposition}[section]
\newtheorem{algorithm}{Algorithm}
\newtheorem{definition}{Definition}
\theoremstyle{remark}
\newcommand {\C}{{\mathbb{C}}}
\newcommand {\R}{{\mathbb{R}}}
\newcommand {\Z}{{\mathbb{Z}}}
\newcommand {\N}{{\mathbb{N}}}
\newcommand {\LL}{{\mathcal{L}}}
\newcommand {\eps}{{\varepsilon}}
\newcommand{\diff}{\textnormal{ d}}
\begin{document}

\title[Strong orthogonality and cusp forms]{Strong orthogonality between the M\" obius function, additive characters, and Fourier coefficients of cusp forms}
\date{\today}
\author{\'Etienne Fouvry}
\address{ Universit\' e Paris Sud, Laboratoire  de  Math\' ematique, UMR 8628, Orsay, F--91405 France, CNRS, Orsay, F--91405, France}
\email{Etienne.Fouvry@math.u-psud.fr}
\author{Satadal Ganguly}
\address{Theoretical Statistics and Mathematics Unit, Indian Statistical Institute, 203 Barrackpore Trunk Road, 
Kolkata 700108, India}
\email{sgisical@gmail.com}
 
\subjclass[2010]{Primary 11F30; Secondary 11N75}

\begin{abstract}  Let  $\nu_{f}(n)$ be  the $n$-th nomalized Fourier coefficient of a Hecke--Maass cusp form  $f$ for 
${\rm SL }(2,\Z)$ and let $\alpha$  be a real number. 
We prove strong oscillations of the argument of $\nu_{f}(n)\mu (n)
\exp (2\pi i n \alpha )$ as $n$ takes consecutive integral values.  \end{abstract}

\maketitle

\renewcommand{\theenumi}{(\roman{enumi})}


\section{Introduction}

Fourier coefficients of cusp forms  are mysterious objects and an interesting question is how,
for a fixed form, its Fourier coefficients are distributed. There are many results from which the distribution appears to be highly random. 
For example, consider
 the following uniform bound on linear 
forms involving normalized Fourier coefficients $\nu_f (n)$ of a Maass cusp form $f$ (see \S 2 for the normalization) 
twisted by an additive character $e(\alpha):=\exp(2\pi i \alpha)$ (see \cite[Theorem 8.1]{Iw2}):
\begin{equation}\label{sum}
\sum_{|n|\leq N} \nu_f(n)e(n\alpha) \ll_f N^{1/2}\log 2N.
\end{equation}
We emphasize that the implied constant here depends only on $f$ and not on the real number $\alpha$. 
The estimate \eqref{sum} signifies an enormous amount (square-root of the length of summation) of cancellations. 
This means  that the Fourier coefficients are quite far from being aligned with the values of any fixed additive character and
therefore, the bound \eqref{sum} can be interpreted as manifestation of \textit{non-correlation} or a kind of ``\textit{orthogonality}'' between 
the Fourier coefficients of $(\nu_f(n))$ and the sequence $(e(n\alpha))$. Following \cite{Sarnak-Lecture} and \cite{GT}, we say
 two sequences  
$(x_n)$ and $(y_n)$ of complex numbers are \textit{asymptotically orthogonal} (in short, ``orthogonal") if 
\begin{equation}\label{ortho}
\sum_{1 \leq n \leq N} x_n y_n = o\Bigl( \bigl(\sum_{n \leq N} |x_n|^2\bigr)^{\frac{1}{2}} 
\bigl(\sum_{n \leq N} |y_n|^2\bigr)^{\frac{1}{2}}\Bigr)
\end{equation}
as $N \longrightarrow \infty$; and \textit{strongly asymptotically orthogonal} (in short, ``strongly orthogonal") if 
\begin{equation}\label{sortho}
\sum_{1 \leq n \leq N} x_n y_n = O_A\Bigl( (\log N)^{-A} \sum_{n \leq N} |x_n y_n|\Bigr)
\end{equation}
for every $A \geq 0$, uniformly for $N \geq 2$. 
The bound \eqref{sum} shows that the two sequences $(\nu_f(n))$ and $(e(n\alpha))$ are strongly orthogonal.
The question we seek to answer is whether strong orthogonality is manifested if, instead of the sum in \eqref{sum}, we  consider the corresponding sum 
over primes: 
\begin{equation}\label{primesum}
 \mathcal{P}_f(X, \alpha):=\sum \limits_{\substack{p \leq X\\ p \textnormal{ prime}}} \nu_f(p)e(p\alpha).
\end{equation}
 Another interesting question is to ask whether  the sequences $(\nu_f(n)e(n\alpha))$ and $(\mu(n))$ are strongly orthogonal.
The {\it M\"obius Randomness Law} (see \cite[\S 13.1]{IK})
asserts that the sequence $(\mu(n))$ should be orthogonal to any ``{\it reasonable}\rq\rq{} sequence.  Sarnak has recently posed a more precise
conjecture in this direction and we refer  the reader to \cite{Sarnak-Lecture}, \cite{BSZ}, \cite{CS} \& \cite{SU} for 
 recent developments on this theme.\\ This question leads us to investigate cancellations in the sum \textit{dual} to \eqref{sum} (in the sense of Dirichlet convolution):
\begin{equation}\label{musum}
 \mathcal{M}_f(X, \alpha):=\sum \limits_{1 \leq n \leq X}\mu(n) \nu_f(n)e(n\alpha).
\end{equation}
Using classical techniques from analytic number theory and a recent impressive result due to Miller \cite{Mi}, we establish bounds for both \eqref{primesum} and \eqref{musum} that go beyond strong orthogonality, at least when
 $f$ is a Maass cusp form for the full modular group ${\rm SL}(2, \mathbb{Z})$ (of arbitrary weight 
and Laplace eigenvalue).   
  Here our definition of Maass form is general enough to include 
holomorphic modular forms.
Our main theorem is:
\begin{theorem}\label{mainthm1}  There exists an effective absolute $c_{0}>0$ such that, for any 
Maass cusp form  $f$ for the group ${\rm SL}(2, \mathbb{Z})$,  of arbitrary weight and Laplace eigenvalue, 
there exists an effective constant $C_0(f)>0$
   such that one has the inequalities
\begin{equation}\label{139}
 \Bigl| \, \mathcal{P}_f(X, \alpha))\, \Bigr| \leq C_0(f)  X\exp \bigl (-c_0 \sqrt {\log X}\bigr),
 \end{equation}
 and 
 \begin{equation}\label{144}
 \Bigl\vert \, \mathcal{M}_f(X, \alpha)\, \Bigr\vert \leq C_0(f)  X\exp \bigl (-c_0 \sqrt {\log X}\bigr),
\end{equation}
 for every  $\alpha \in \R$ and $X \geq 2$.
\end{theorem}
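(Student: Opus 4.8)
The plan is to deduce both bounds from the linear estimate \eqref{sum} by a Vaughan-type identity combined with a type-II (bilinear) estimate, mimicking the classical treatment of $\sum_{p\le X} e(p\alpha)$ but using Miller's result \cite{Mi} to handle the diagonal/type-I contributions. Since \eqref{144} is the sum dual (under Dirichlet convolution) to \eqref{sum}, and \eqref{139} follows from \eqref{144} by partial summation together with the identity $\sum_{d\mid n}\mu(d)\nu_f(n/d)\log(n/d)$-type manipulations, I would concentrate on $\mathcal{M}_f(X,\alpha)$ and recover $\mathcal{P}_f(X,\alpha)$ at the end. First I would fix a parameter $U=U(X)$ to be chosen as a small power of $\exp(\sqrt{\log X})$ and apply Vaughan's identity to $\mu(n)$, splitting $\mathcal{M}_f(X,\alpha)$ into a ``type-I'' sum $\sum_{d\le U}\mu(d)\sum_{m\le X/d}\nu_f(dm)e(dm\alpha)$ (and its variant with $\mu\ast 1$), and a ``type-II'' sum $\sum_{U< d\le X/U}\ a_d\sum_{b}\ \nu_f(db)e(db\alpha)$ with $a_d,b$ suitably bounded coefficients supported on dyadic ranges.

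**Type-I estimate.**
For the type-I sums the inner sum $\sum_{m\le Y}\nu_f(dm)e(dm\alpha)$ has a fixed ``stretch'' $d$; using the Hecke relations $\nu_f(dm)=\sum_{e\mid(d,m)}\mu(e)\nu_f(d/e)\nu_f(m/e)$ I would reduce to sums of the shape $\nu_f(d/e)\sum_{m'\le Y/e}\nu_f(m')e(d e m'\alpha)$, each of which is $\ll_f (Y/e)^{1/2}\log(2Y)$ by \eqref{sum} applied with the modified additive frequency $de\alpha$ — crucially the implied constant there is independent of the frequency. Summing over $e\mid d$ and $d\le U$, and using the divisor bound together with a crude bound $\sum_{d\le U}|\nu_f(d/e)| \ll_f U(\log U)^{O(1)}$ (which is where a consequence of Miller's sharp second-moment/average bound on $\nu_f$ enters to keep the loss under control), the whole type-I contribution is $\ll_f X^{1/2}U(\log X)^{O(1)}$, which is $\ll_f X\exp(-c_0\sqrt{\log X})$ for an appropriate choice of $U$.

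**Type-II estimate and the main obstacle.**
The genuinely hard part is the type-II sum $T:=\sum_{M<m\le 2M}\sum_{N<n\le 2N}a_m\,\nu_f(mn)\,e(mn\alpha)$ with $MN\asymp X$ and both $M,N$ in the middle range $[U, X/U]$. Here I would apply Cauchy--Schwarz in the $m$-variable to remove $a_m$, arriving at $|T|^2 \ll M\sum_{m}\bigl|\sum_{n}\nu_f(mn)e(mn\alpha)\bigr|^2$, expand the square, and swap the order of summation to get $M\sum_{n_1,n_2}\sum_{m}\nu_f(mn_1)\overline{\nu_f(mn_2)}\,e(m(n_1-n_2)\alpha)$. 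Opening the Hecke multiplicativity of $\nu_f(mn_1)\overline{\nu_f(mn_2)}$ into a sum of $\nu_f$ at various arguments, the inner sum over $m$ becomes a linear sum of the type \eqref{sum} with frequency $(n_1-n_2)\alpha$ times a fixed rational, so it is $\ll_f (X/N)^{1/2}\log X$ unless $n_1=n_2$; the off-diagonal terms then contribute $\ll_f MN\cdot(X/N)^{1/2}(\log X)^{O(1)}$ after summing over the $\ll N^2$ pairs and controlling $\sum|\nu_f|$ moments via Miller's bound, while the diagonal $n_1=n_2$ contributes $\ll_f M N \sum_{m\le 2M}|\nu_f(mn)|^2 \ll_f X M (\log X)^{O(1)}$ again by Miller. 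Balancing, $|T|\ll_f X (\log X)^{O(1)}(M^{-1/2}+ \cdots)$, which beats $X\exp(-c_0\sqrt{\log X})$ precisely because $M,N\ge U$ are not too small; the delicate point — and the main obstacle — is arranging the Hecke-multiplicativity expansion so that \emph{every} resulting linear sum is amenable to \eqref{sum} with a frequency-uniform constant, and simultaneously keeping the divisor-type losses from the Hecke coefficients below the $\exp(c_0\sqrt{\log X})$ budget, which is exactly what Miller's averaged bound on $|\nu_f(n)|$ is used for. Finally, collecting the type-I and type-II bounds and optimizing $U$ gives \eqref{144}, and a standard partial-summation/hyperbola argument deduces \eqref{139} from \eqref{144}.
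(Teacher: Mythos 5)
Your proposal has two genuine gaps, and they are the two places where the paper's real work lives. First, you never separate the cases according to the Diophantine nature of $\alpha$, and your argument cannot succeed without doing so. Every saving you extract comes from oscillation of the additive character $e(m(n_1-n_2)\alpha)$ (or from the frequency-uniform linear bound \eqref{sum}), but when $\alpha$ is close to a rational $a/q$ with $q$ small --- the extreme case being $\alpha=a/q$ with $q$ bounded, or $\alpha=0$ --- the additive character is essentially periodic and the bilinear machinery produces a term of the shape $Xq^{-1/4}$ (this is visible in \eqref{final}), which is useless. On these major arcs one must exploit the multiplicative structure instead: detect the residue class of $n$ modulo $q$ by Dirichlet characters and prove a prime number theorem for $\sum_{n\le X}\mu(n)\lambda_f(n)\chi(n)$ with error $X\exp(-c\sqrt{\log X})$, which requires the zero-free region of Hoffstein--Ramakrishnan for $L(s,f\otimes\chi)$ (absence of Siegel zeros on ${\rm GL}(2)$) and a separate analysis of the Dirichlet series $\sum\mu(n)\lambda_f(n)\chi(n)n^{-s}$, since the $\mu$-twisted sum does not follow combinatorially from the $\Lambda$-twisted one. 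This entire half of the proof is absent from your sketch. (Relatedly, your claim that \eqref{139} follows from \eqref{144} by partial summation and a convolution identity does not work: $\lambda_f$ is not completely multiplicative, and in any case summing the error $\left(X/m\right)\exp\bigl(-c\sqrt{\log(X/m)}\bigr)$ over $m\le X$ destroys the saving; the paper proves the two bounds by parallel, independent arguments.)

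Second, your type-II analysis misidentifies both the shape of the off-diagonal sums and the role of Miller's theorem. After Cauchy--Schwarz and opening the square, the Hecke relation gives
\begin{equation*}
\lambda_f(mn_1)\,\lambda_f(mn_2)=\sum_{d\mid(mn_1,mn_2)}\lambda_f\Bigl(\frac{m^2n_1n_2}{d^2}\Bigr),
\end{equation*}
so the inner sum over $m$ runs over $\lambda_f$ evaluated at (essentially) perfect squares times a fixed integer. These are the coefficients of the symmetric square lift, a ${\rm GL}(3)$ object; the ${\rm GL}(2)$ linear bound \eqref{sum} simply does not apply to $\sum_{m\le T}\lambda_f(km^2)e(m\beta)$, and no rearrangement of Hecke multiplicativity turns it into a sum to which \eqref{sum} applies. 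This is precisely where Miller's theorem on additively twisted sums of ${\rm GL}(3)$ Fourier coefficients is needed (giving $T^{3/4+\eps}$), together with a careful induction to handle the non-squarefree stretches and, in the ranges where Miller's bound is trivial, the classical estimate for $\sum_m\min(N,\Vert\alpha m\Vert^{-1})$ --- which is exactly where the minor-arc condition on $q$ enters. In your write-up Miller's result is demoted to an ``averaged bound on $|\nu_f(n)|$'' for controlling divisor-type losses; those losses are in fact controlled by Rankin--Selberg and the higher-moment estimates of Lau and L\"u, while Miller's theorem is the irreplaceable analytic input for the off-diagonal.
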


The strong orthogonality we mentioned above now follows from the lower bound given in Proposition \ref{strongortho}.  
In particular, \eqref{144} says that the M\" obius Randomness Law is true in the
 case of the function $n\mapsto \nu_{f}(n)e (n\alpha)$ in a strong sense. 
Theorem \ref{mainthm1} can also be interpreted of as the Prime Number Theorem (denoted henceforth by PNT) for Fourier coefficients of
cusp forms with additive twists. In fact, \eqref{musum} is the ${\rm GL}(2)$ analogue of a result of Davenport (see \cite{Dav} or  \cite[\S 13.5]{IK}) 
which says that for any real number $\alpha$, $X\geq 2$ and $A >0$, we have
the bound
\begin{equation}\label{Dav}
\sum_{n\leq X}\mu(n)e(n\alpha) \ll_A X(\log X)^{-A}.
\end{equation} 
The weaker bound here is a reflection of the exceptional zero (see \cite[Chap. 5]{IK}) which is not yet ruled out in the ${\rm GL}(1)$ situation. 
By contrast,  Hoffstein and 
Ramakrishnan \cite{HR} have shown that there are no exceptional zero for $L$-functions on ${\rm GL} (2)$  
that are not associated to grossencharacters of quadratic fields.

As soon as $\alpha$ has a sufficiently good approximation by rationals, for example, if we have suitable 
control over the  infinite continued fraction expansion of $\alpha$, then the upper bound (6) is highly improved and we obtain a power saving. 
The most
typical case is  the \textit{golden ratio}
$
\alpha= \rho = (1+\sqrt 5)/2.
$ In that particular case, we know that for every
$X>2$, there is a fraction $a/q$, $(a, q)=1$, satisfying \eqref{Dir} and the
inequality
$\sqrt X < q < 2 \sqrt X$. The formula \eqref{final} then directly leads to the following corollary

\begin{corollary}\label{Golden}
We have the bound
$$
\mathcal{M}_f (X,\rho)\ll X^{\frac{59}{60}+\eps}.
$$
\end{corollary}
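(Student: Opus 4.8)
The plan is to specialize the main circle-method estimate for $\mathcal{M}_f(X,\alpha)$ — presumably the "formula \eqref{final}" alluded to in the text — to the case $\alpha=\rho$, exploiting the exceptionally regular continued fraction expansion $\rho=[1;1,1,1,\dots]$. The key arithmetic input is that the convergents $p_k/q_k$ of $\rho$ have denominators $q_k=F_{k+1}$ (Fibonacci numbers), which grow like $\rho^k/\sqrt5$; consequently the gaps between consecutive $q_k$ are only a bounded multiplicative factor apart, so for every $X>2$ one can select a convergent $a/q$ with $(a,q)=1$, $|\rho-a/q|<1/q^2$ (Dirichlet's inequality \eqref{Dir}), and with $q$ trapped in a dyadic-type window $\sqrt X<q<2\sqrt X$. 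This is the "most typical case" the paper highlights.

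First I would recall the general bound that the text promises: a Vinogradov-type / circle-method decomposition of $\mathcal{M}_f(X,\alpha)$ into a "major-arc" contribution governed by how well $\alpha$ is approximated by $a/q$, and a "minor-arc" contribution controlled by $q$ being neither too small nor too large. Schematically, for a rational approximation $a/q$ with $|\alpha-a/q|\le 1/q^2$, one expects a bound of the shape
\[
\mathcal{M}_f(X,\alpha)\ll X^{\eps}\Bigl(\frac{X}{\sqrt q}+X^{3/4}q^{1/4}+\sqrt{Xq}\Bigr)
\]
or something structurally similar, where the three terms reflect, respectively, the gain from averaging a smooth weight against $e(n\alpha)$, the Weyl/van der Corput treatment of the exponential sum, and the error from truncating the approximation. (The precise exponents will come from combining the Miller bound \cite{Mi} on the shifted convolution / additively twisted sum with a Vaughan-type identity for $\mu\nu_f$; I would take the exact inequality as given by \eqref{final}.) Substituting the balanced choice $q\asymp X^{1/2}$ forces each term to be roughly $X^{3/4+\eps}$ in the naïve balancing, but the sharper exponent $59/60$ indicates that the actual \eqref{final} has a more refined shape — likely $X^{1+\eps}/q^{c}+X^{a}q^{b}$ with constants for which the optimum over $q\in(\sqrt X,2\sqrt X)$ lands at $X^{59/60}$. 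So the second step is simply to insert $q\asymp X^{1/2}$ into \eqref{final} and read off the exponent.

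The third step is to verify that the continued fraction of $\rho$ genuinely delivers a usable $q$ for \emph{every} $X$, not just infinitely many: since consecutive Fibonacci denominators satisfy $q_{k+1}/q_k\to\rho<2$, the intervals $[\sqrt X,2\sqrt X]$ always catch at least one $q_k$ once $X$ is large, and small $X$ are absorbed into the implied constant; this is exactly why $\rho$ is singled out as the cleanest example. The main obstacle — and the only genuinely non-routine point — is confirming that the exponent arithmetic in \eqref{final} really does bottom out at $\tfrac{59}{60}$ under the constraint $\sqrt X<q<2\sqrt X$; that is a matter of tracking the precise constants in the Vaughan decomposition and in Miller's bound through the optimization, rather than any new idea. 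Everything else (Dirichlet approximation, Fibonacci growth, dyadic bookkeeping, absorbing $X^{\eps}$ losses) is standard.
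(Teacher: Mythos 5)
Your proposal is exactly the paper's (one-line) proof: for the golden ratio the convergents $a/q$ satisfy \eqref{Dir}, their denominators are Fibonacci numbers with consecutive ratios tending to $\rho<2$, so every interval $(\sqrt X,2\sqrt X)$ contains one for $X$ large, and one then reads the exponent off \eqref{final}. The one point worth correcting is where $\tfrac{59}{60}$ actually comes from: it is \emph{not} produced by balancing the $q$-dependent terms over $q\asymp\sqrt X$ (your schematic shape would give only $X^{7/8+\eps}$ there). With the paper's fixed choices $y=z=X^{1/5}$, formula \eqref{final} reads
$$
T_2(X,\rho)\ \ll_{\eps}\ y^{-\frac{1}{12}}X^{1+\eps}+\bigl(z^{-\frac14}X+q^{-\frac14}X+q^{\frac14}X^{\frac34}\bigr)(\log X)^{K},
$$
and at $q\asymp X^{1/2}$ the bracketed terms contribute only $X^{19/20}+X^{7/8}+X^{7/8}$; the dominant term is the $q$-independent $y^{-1/12}X^{1+\eps}=X^{59/60+\eps}$, which traces back to the $C^2L^{5/6}(CL)^{\eps}$ term of Theorem \ref{centrall} (the small-$d'$ off-diagonal piece estimated via Miller's theorem) and is governed solely by the Vaughan cutoff $y$, not by the quality of the rational approximation. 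Combining this with the type I bound \eqref{type1}, $T_1\ll(Xyz)^{1/2+\eps}=X^{7/10+\eps}$, and the $O(y+z)=O(X^{1/5})$ error in \eqref{types} yields the corollary; the rest of your argument (Dirichlet approximation by convergents, Fibonacci growth, absorbing small $X$ into the constant) matches the paper.
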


Theorem \ref{mainthm1} is suitable for invoking the circle method. For instance, reserving the
letter $p$ to denote primes, we have the following corollary. The proof follows directly  from the basic identity of the circle method and the Parseval formula.

\begin{corollary}\label{CIRC} There exists an effective  absolute $c_{0}>0$, such that  for any Maass cusp form $f$ 
for the group ${\rm SL}(2, \mathbb{Z})$ there exists an effective constant $C_{0}(f)$ such
 that
    one has the inequality
\begin{equation}\label{circ1}
\Bigl\vert \,\underset{N=p+a+b}{\sum \ \sum\  \sum }\nu_f (p)\,\alpha_a \,\beta_b\, \Bigr\vert \leq C_0(f) N \exp\bigl( -c_0 \sqrt{ \log N}\bigr)
||\alpha_{N}||\, ||\beta_{N}||,
\end{equation}
for every $N\geq 4$, for every sequence of complex numbers $(\alpha_{a})_{a\geq 1}$ and $(\beta_{b})_{b\geq 1}$
where we denote $||\alpha_{N}||^2=\sum\limits_{1\leq a \leq N} \vert \alpha_a \vert^2$ and
$||\beta_{N}||^2=\sum\limits_{1\leq b \leq N} \vert \beta_b \vert^2$. In particular, for the Ramanujan $\tau$--function and for $N\geq 6$,  one has the inequality
\begin{equation}\label{circ2}
\Bigl\vert \,\underset{N=p_1+p_2+p_3}{\sum \ \sum\  \sum }\tau (p_1) \Bigr\vert 
\leq C_0 N^{15/2} \exp\bigl( -c_0 \sqrt{ \log N}\bigr),
\end{equation}
where $C_0$ and $c_0$ are some positive constants, both effectively computable.
 \end{corollary}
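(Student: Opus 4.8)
The plan is to feed Theorem \ref{mainthm1} into the basic identity of the circle method; as the remark preceding the statement indicates, no new arithmetic input is needed. For \eqref{circ1}, I would introduce the exponential sums $A(\theta)=\sum_{1\le a\le N}\alpha_a\, e(a\theta)$ and $B(\theta)=\sum_{1\le b\le N}\beta_b\, e(b\theta)$ for $\theta\in[0,1]$, and note that $\sum_{p\le N}\nu_f(p)e(p\theta)$ is exactly $\mathcal{P}_f(N,\theta)$. Multiplying the three series together against $e(-N\theta)$, integrating over $[0,1]$, and using the orthogonality relation $\int_0^1 e(m\theta)\,d\theta=\delta_{m=0}$ isolates precisely the triples $(p,a,b)$ with $p+a+b=N$; since a prime $p$ and positive integers $a,b$ with $p+a+b=N$ automatically satisfy $p,a,b\le N$, truncating each series at $N$ costs nothing, so
\[
\underset{N=p+a+b}{\sum\ \sum\ \sum}\nu_f(p)\,\alpha_a\,\beta_b=\int_0^1 \mathcal{P}_f(N,\theta)\,A(\theta)\,B(\theta)\,e(-N\theta)\,d\theta .
\]
Next I would invoke \eqref{139} with $X=N$ (the estimate being uniform in the real argument) to get $|\mathcal{P}_f(N,\theta)|\le C_0(f)\,N\exp(-c_0\sqrt{\log N})$, pull it out of the integral, and conclude by Cauchy--Schwarz followed by Parseval:
\[
\int_0^1 |A(\theta)|\,|B(\theta)|\,d\theta\le\Bigl(\int_0^1 |A(\theta)|^2\,d\theta\Bigr)^{1/2}\Bigl(\int_0^1 |B(\theta)|^2\,d\theta\Bigr)^{1/2}=||\alpha_N||\,||\beta_N|| .
\]
(For $N<4$ the triple sum is empty.) This yields \eqref{circ1}.

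For \eqref{circ2} I would take $f$ to be the normalized cusp form attached to the Ramanujan $\Delta$-function, which has weight $12$, so that $\nu_f(n)=\tau(n)\,n^{-11/2}$ and $f$ falls within the scope of Theorem \ref{mainthm1}. The extra factor $p^{11/2}$ must be removed by partial summation: since $t\mapsto t\exp(-c_0\sqrt{\log t})$ is non-decreasing for $t\ge 2$ (here $c_0$ is a small absolute constant), \eqref{139} gives $|\mathcal{P}_f(t,\theta)|\le C_0(f)\,N\exp(-c_0\sqrt{\log N})$ for all $2\le t\le N$, and Abel summation then yields, uniformly in $\theta$,
\[
\sum_{p\le N}\tau(p)\,e(p\theta)=N^{11/2}\mathcal{P}_f(N,\theta)-\frac{11}{2}\int_2^N t^{9/2}\,\mathcal{P}_f(t,\theta)\,dt\ \ll_f\ N^{13/2}\exp(-c_0\sqrt{\log N}) .
\]
Feeding this into the circle-method identity above, with the first series replaced by $\sum_{p_1\le N}\tau(p_1)e(p_1\theta)$ and with $\alpha_a,\beta_b$ both taken to be the indicator of being a prime, bounding that first factor by its supremum, and using $\int_0^1|\sum_{p\le N}e(p\theta)|^2\,d\theta=\pi(N)\le N$, I would arrive at
\[
\Bigl|\underset{N=p_1+p_2+p_3}{\sum\ \sum\ \sum}\tau(p_1)\Bigr|\ \ll_f\ N^{13/2}\exp(-c_0\sqrt{\log N})\cdot N\ \le\ C_0\,N^{15/2}\exp(-c_0\sqrt{\log N}) ,
\]
which is \eqref{circ2}; here $N\ge 6$ because $2+2+2$ is the smallest sum of three primes, and effectivity of $C_0,c_0$ is inherited from Theorem \ref{mainthm1}.

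Since all the arithmetic content sits in Theorem \ref{mainthm1}, I do not expect a genuine obstacle here. The only points that need care are: checking that the truncated generating series really do produce the basic identity; the monotonicity remark, which is what lets the saving $\exp(-c_0\sqrt{\log N})$ survive the Abel summation in the $\tau$-case instead of degenerating to the useless $\exp(-c_0\sqrt{\log 2})$; and the bookkeeping of exponents — $11/2$ from the weight of $\Delta$, a further $+1$ from the bound $|\mathcal{P}_f(N,\theta)|\ll N\exp(-c_0\sqrt{\log N})$, and another $+1$ from $\pi(N)\le N$ — which together give the exponent $15/2$ in \eqref{circ2}.
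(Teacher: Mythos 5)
Your proposal is correct and follows essentially the same route the paper intends: the paper offers no detailed proof beyond the remark that the corollary ``follows directly from the basic identity of the circle method and the Parseval formula,'' and your argument is precisely that, with \eqref{139} supplying the uniform bound on $\mathcal{P}_f(N,\theta)$ and Cauchy--Schwarz plus Parseval handling $A(\theta)B(\theta)$. Your treatment of \eqref{circ2} — Abel summation to pass from $\nu_f(p)=c\,\tau(p)p^{-11/2}$ to $\sum_{p\le N}\tau(p)e(p\theta)\ll N^{13/2}\exp(-c_0\sqrt{\log N})$ (with the harmless monotonicity proviso, since $c_0$ may be shrunk so that $c_0\le 2\sqrt{\log 2}$), followed by $\int_0^1|P(\theta)|^2\,d\theta=\pi(N)\le N$ — is the natural reading of the paper's sketch and reproduces the exponent $11/2+1+1=15/2$.
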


\noindent
To see the interest of \eqref{circ1}, suppose that the sequences $(\alpha_a)$ and $(\beta_b)$ are the 
characteristic functions of sequences of positive integers $\mathcal A$ and $\mathcal B$, with counting functions 
$A (N)$ and $B( N)$, up to $N$. If $f$ is holomorphic, Deligne's bound \eqref{Ramu} implies the trivial bound
  $$
\Bigl\vert \,\underset{N=p+a+b}{\sum \ \sum\  \sum }\nu_f (p)\,\alpha_a \,\beta_b\, \Bigr\vert �\ll A(N)   B(N).
$$
Hence, \eqref{circ1} is interesting as soon as the sequences $\mathcal A$ and $\mathcal B$ are dense enough, which means the condition
$
A(N) B(N) \gg N^2 \exp\bigl( -2c_0 \sqrt {\log N}\bigr),
$
is satisfied for sufficiently large $N$; for instance, when $\mathcal A$ and $\mathcal B$ are the sequence of primes or
certain sequences of smooth numbers:
$
\mathcal{A}=\mathcal{ B}=\bigl\{ n\ :\ p\mid n \Rightarrow p< \exp\bigl( \log^\theta n)\bigr\},
$
where $\theta$ is any fixed real number satisfying $\theta >1/2.$ 
Note that \eqref{circ2} is trivial if $N$ is even; but if $N\geq 7$ is odd, the famous Vinogradov's Theorem gives the lower bound
$$
\underset{N=p_1+p_2+p_3}{\sum \ \sum\  \sum }1
\gg N^2 (\log N)^{-3}.
$$
In other words, \eqref{circ2} shows a lot of oscillations of the coefficient $\tau (p_1)$ in the expression of $N$ of the form $N=p_1+p_2+p_3$. 
The same is true for the coefficient $\tau (p_1) \tau (p_2) \tau (p_3)$.

Our proof  is along the lines of Davenport's \cite{Dav} and it follows different paths depending on the 
diophantine nature of $\alpha$: whether or not it is near a rational number with denominator sufficiently small. 
In the first case; i.e., when $\alpha$ belongs to the so called {\it major arcs}, we can use a suitable PNT for automorphic 
$L$-functions.\\
The formulas (6) and (7), though apparently not equivalent, are recognized
to have the same depth. We only prove the bound (7) since the proof  of  (7) is more delicate than the proof of  (6). One reason for this
is that we need to prove the required PNT Theorem \ref{pnt2} from scratch.
 
For {\it  minor arcs}, i.e., when $\alpha$ cannot be approximated by rationals with small denominators, we apply Vinogradov\rq{}s method for 
exponential sum via Vaughan\rq{}s identity. 
Thus we are led to the so called sums of type I and type II. In estimating the type II sum, the more difficult one, we encounter a sum which is 
naturally related to the symmetric square lift of the Maass form $f$. 
A result of Miller (see \cite[Theorem 1.1]{Mi}) suitably adapted to our requirement (see Lemma \ref{Millgen}) is crucial here. 
Miller\rq{}s theorem, which is a consequence of
Voronoi summation formula for ${\rm GL}(3)$ (see \cite{MS1} and also \cite{GL1}), says the following:
For a cusp form on ${\rm GL}(3,\Z)\backslash {\rm GL}(3,\R)$ with Fourier coefficients $a_{r,n}$, one has 
\begin{equation}\label{Mill100}
\sum_{n\leq T} a_{r,n}  e(n\alpha) \ll T^{\frac{3}{4}+\eps},
\end{equation}
 where the implied constant depends only the form, the integer $r$ and $\varepsilon$. This is why we confine ourselves
to the level one situation as the analogous result in the case of a general level, though expected, is not yet available.

However, in certain ranges of the variables \eqref{Mill100}  gives  trivial bounds and we need to appeal 
to the oscillations of the additive character $n\mapsto e(\alpha n)$. Here the condition that $\alpha$ belongs to the 
minor arcs becomes important (see  the classical Lemma \ref{IwKowp.346} below).\\
This brings us to another difference between the proofs of (6) and (7).  This is due to the difference between 
 the combinatorial structures of $\Lambda$ and $\mu$.
It is more  difficult in this context to apply the Vaughan identity (89) for the M\"obius function
 than its classical analogue for the von Mangoldt function. The reason is that one needs to
control the greatest common divisors of the variables of summations in the case of the M\"obius function 
whereas this problem disappears completely in the case of  the von Mangoldt function (as two distinct primes are coprime). This problem is amplified by the fact that 
$n\mapsto \lambda_f (n)$ is not completly 
multiplicative (see Lemma \ref{Hec1}). To circumvent this, we introduce a function $\lambda^*$ (see \eqref{defto}) to average out the 
chaotic behavior of the function $\lambda_f$ (see \eqref{defto}). Then  the average behaviour of the function $\lambda^*$ is controlled thanks
to the recent result of Lau and L\"{u} \cite{Lau-Lu} 
on higher moments of Fourier coefficients of Maass cusp forms.
In the case where $f$ is holomorphic, the proof is highly shortened due to Deligne's bound.

\subsection{Some remarks}\label{somer}

\noindent
\textit{Remark 1.}
We expect both the sums \eqref{primesum} and \eqref{musum} to be quite small, at least on average. Indeed, it is relatively easy to see that \textit{square-root cancellations} take place in both the sums 
 in the mean-square sense. By the Parseval formula and the Rankin-Selberg estimate 
(see \eqref{L2}) it readily follows that
\[
\int_0^1 |\mathcal{M}_f(X, \alpha)|^2 \diff \alpha \leq \sum_{1 \leq n \leq X}|\nu_f(n)|^2 \ll_f X,
\]
and similarly for $\mathcal{P}_f(X, \alpha)$.
Using a simple observation of Oesterl\'e (see \cite[\S 1]{Murty-Sankar}) we can even get the pointwise bound
$$
\mathcal{M}_f(X, \alpha), \ \mathcal{P}_f(X, \alpha) \ll_{\alpha, \epsilon, f} X^{\frac{1}{2}+\eps}
$$
 for any $\eps>0$, for almost all $\alpha$ (in the sense of Lebesgue measure). Recall the famous theorem of Carleson \cite{Carl} which says that if $(c_n)$ is a sequence of complex
numbers satisfying
$
\sum_{n=1}^\infty |c_n|^2 < \infty,
$
then the Fourier series $\sum_{n=1}^\infty c_n e(n \alpha)$ converges for almost all real $\alpha$. Now the Rankin-Selberg
estimate \eqref{L2} and partial summation allows us to apply the theorem to the sequence
$
c_n =\frac{\nu_f(n)}{n^{1/2+\eps}},
$
where $\eps>0$ is arbitrary and draw the desired conclusion.
 Of course, this line of arguments does not give any non-trivial bound for any specific value of $\alpha$.

\noindent
\textit{Remark 2}. 
Regarding the sum appearing in \eqref{musum},
It turns out  that proving mere orthogonality between $(\mu(n))$ and the sequence $(\nu_f(n)e(n\alpha))$ is not very difficult. 
 Indeed, bounds of the type
$$
\sum_{1\leq n\leq X}\  \bigl\vert\,\lambda_f(n) \, \bigr\vert \ll_f  X (\log X)^{-\delta}
$$
for some $0<\delta \leq1$ for normalized Hecke eigenvalues $\lambda_f(n)$ of holomorphic forms $f$ have been known
for quite some time. See, for example,  \cite{EMS}, \cite{Ram1}, and \cite{Ran2}. For Maass forms also, one can easily
 conclude that 
\[
\sum_{1\leq n\leq X}\  \bigl\vert\,\lambda_f(n) \, \bigr\vert =o(X)
\]
as $X \longrightarrow \infty$ from  \cite[eqn. (66)]{Roman} and  \cite[Theorem 2]{Elliott}.
Orthogonality follows from this bound and \eqref{L2}. 
 However, as the lower bound \eqref{lowerbd} shows, it is not possible to save an arbitrary large power of logarithm
in the above sum. The situation is exactly similar for the sum over primes.

\subsection{Notation and convention} We follow the well known notations and conventions described below:

\noindent $\bullet$  $d(n)$ denotes the number of divisors of the integer  $n$, $d_{3}(n)$ is the number of ways of writing
$n=n_{1}n_{2}n_{3}$, where the $n_{i}$ are positive  integers. The number of prime divisors of $n$
is $\omega (n)$ and $\varphi(n)$ denotes the number of moduli coprime to $n$.

\noindent $\bullet$ $(m,n)$ and $[m, n]$ denote the $g.c.d$ and the $l.c.m.$ of integers $m$ and $n$.

\noindent $\bullet$ $\eps$ denotes a positive unspecified real number, different in different occurences.

\noindent $\bullet$ In asymptotic formulae of the form  $A(X) =B(X)+O_{\beta}(C(X)) \textnormal{ or }\\
 A(X) \ll_{\beta} B(X)$
the suffix $\beta$ signifies the dependence of the implied constant on some parameter $\beta$ which is fixed with respect to the variable $X$. However, dependence of various parameters will sometimes be suppressed when it is either not important for our purpose or is clear from the context.\\
\noindent $\bullet$ $w \sim W$ denotes $W<w \leq 2W$.

\vspace{3mm}


\textit{Acknowledgement:}  During the preparation of this work, E.F. benefited from the support
of Institut Universitaire de France.
S.G. would like to thank project ARCUS and the Laboratoire de Math\' ematique of the Universit\' e Paris Sud for arranging his visit
 during which this work was started. The authors thank F.~Brumley, D.~Bump, R.~Holowinsky, D.~Goswami, 
E.~Kowalski, Y.K.~Lau,  Ph.~Michel, S.D.~Miller, C.S.~Rajan, M.~Ram Murty, O.~Ramar\'e, E.~Royer,
P.~Sarnak, and J. Wu for many helpful remarks.


\section{Background on Maass forms}

\subsection{Maass forms}
This section contains a very brief account of the theory of Maass forms based primarily on   \cite[\S 4,\,5, and 6]{DFI}. See
also  \cite[\S 2.1]{Bump}. One of our aims  is to explain the embedding of the holomorphic modular forms in the space of Maass forms so that we can give a unified proof of our result. Although we shall work only with forms of 
level one, we consider a general level $q$ in this section. 

Let $k$ be an integer, $q$ a positive integer, and $\chi$, a Dirichlet character modulo $q$ that satisfies the consistency condition
$
  \chi ( - 1 ) = ( - 1 )^k.
$
Such a character gives rise to a character of the Hecke congruence group $\Gamma_0 ( q )$ by declaring $\chi (\gamma ) = \chi ( d )$ for \\
$\gamma = \left(\begin{array}{cc}
  a & b\\
  c & d
\end{array}\right) \in \Gamma_0(q).$
For $z\in \mathbb{H}$, the upper half plane, we set \[j_{\gamma} ( z ) := (cz + d)| cz + d|^{-1} = e^{i \textnormal{ arg} ( cz + d )}.\]
A function $f :\mathbb{H} \longrightarrow \mathbb{C}$ that satisfies the
  condition \[f ( \gamma z ) = \chi ( \gamma ) j_{\gamma} ( z )^k f ( z )\]
  for all $\gamma \in \Gamma_0(q)$ is called is called an {\em automorphic function} of weight $k$, level $q$,
  and character (also called {\em nebentypus}) $\chi$. The Laplace operator of weight $k$ is defined by
\[
   \Delta_k =  y^2 \left( \frac{\partial^2}{\partial x^2} +
  \frac{\partial^2}{\partial y^2} \right) - iky \frac{\partial}{\partial x},
\]
 and a smooth automorphic function $f$ as above that is also an eigenfunction of the Laplace operator; i.e.,
$\left(\Delta_k + \lambda \right)f = 0$
for some complex number $\lambda$, is called a {\em Maass form} of crresponding weight, level, character, and 
Laplace eigenvalue $\lambda$. One writes $\lambda(s)=s(1-s)$ and $s=1/2 +ir$, with $r, s \in \mathbb{C}$,  $r$ being known as
 the \textit{spectral parameter}. 
It is related to the Laplace eigenvalue $\lambda$ by the equation  
\begin{equation} \label{1/4+r2}
 \lambda =\frac{1}{4}+r^2.
 \end{equation}
Beware that some authors define `Maass forms' to be what are Maass forms of weight zero in our setting.
 One can show that $\lambda(|k|/2)$ is the lowest eigenvalue of $-\Delta_k$ and
if $k \geq 0$ (resp. $k \leq 0$) and $f$ is a Maass form with this lowest eigenvalue, then the Cauchy-Riemann equation shows that
$y^{-k/2}f(z)$ (resp. $y^{k/2} \overline{f(z)}$) is a holomorphic function.  These holomorphic functions are actually the classical \textit{modular forms} (see \cite[\S 4]{DFI}). 
 A fact that we require is that the Laplace eigenvalue $\lambda(s) =s(1-s)$ of a Maass cusp form which is not induced from a 
holomorphic form must satisfy 
(see \cite[cor.~4.4]{DFI})
\begin{equation}\label{Karzai}
\Re s = \frac{1}{2} \ \textnormal{ or } \ 0 <s <1.
\end{equation}
However, the \textit{Selberg eigenvalue conjecture} asserts that the latter case never occurs (see \S 3.2 also).

\subsection{Normalizations of Fourier coefficients}
Given a holomorphic cusp form $F$ with 
a Fourier expansion at the cusp at $\infty$ of the form
$$
F(z) = \sum_{n \geq 1} a_F(n) e(n z),
$$
we define the \textit{normalized Fourier coefficients} of a holomorphic cusp form $F$ to be 
\begin{equation}\label{psi}
\psi_F (n) = a_F(n)/n^{(k-1)/2},
\end{equation}
where $k$ is the weight of $F$. 

Now we come to Maass forms.
We consider Maass cusp forms only. See \cite[\S 4]{DFI} for the definition. We shall denote the space of Maass forms of level $q$, weight $k$, and
character $\chi(\textnormal{mod }q)$ by $\mathcal{C}_k(q, \chi)$. A form in this space admits Fourier expansion at the cusp at $\infty$ in 
terms of Whittaker functions 
$W_{\alpha, \beta}$ as follows (see \cite[eqn. (5.1)]{DFI}):
 \[
 f(z) = \sum_{n \neq 0}\rho_f(n)W_{\frac{k n}{2|n|}, ir} (4\pi |n|y)e(nx),
 \]
 where $r$ is the spectral parameter.  
When we speak of Maass cusp forms, we shall always assume that they have norm one; i.e., 
$ \left  \langle f, f \right\rangle =1$ (see \cite[eqn. (4.37)]{DFI}). We define the \textit{normalized Fourier coefficients} of a Maass cusp 
form $f$ (see \cite[Chap. 8]{Iw2}) by
 \begin{align}\label{definorm}
 \nu_f(n) := \left( \frac{4 \pi |n|}{\cosh \pi r }\right)^\frac{1}{2} \rho_f(n)
 \end{align}
provided $f$ is not induced from a holomorphic form; i.e., the Laplace eigenvalue of $f$ is not $\lambda(|k|/2)$. Note that if $f$ is such a Maass cusp form, 
then by \eqref{Karzai}, the spectral parameter $r$ satisfies $
r \in \mathbb{R} \ \textnormal{ or } \ 0 <\frac{1}{2} + i r <1,$
and therefore, $$
{\pi}^{-1} \cosh \pi r =\Gamma(1/2 +i r)^{-1} \Gamma(1/2 -i r)^{-1} \neq 0.
$$ Now we consider Maass cusps forms which are induced from the holomorphic modular forms.
Let $F$ be a holomorphic form of weight $k \geq 0$.
The Fourier coefficients of $F$ are related to the coefficents $\rho_f (n)$ where $f$ is the 
Maass cusp form associated to $F$ in the following way:
\[
f(z) = y^{k/2}F(z) \ \textnormal{ or } \ f(z)=y^{k/2}\overline{F}(z) .
\]
In the first case, the weight of the induced Maass form is $k$ and in the second, it is $-k$.
We know that in both cases the Laplace eigenvalue is $\lambda(k/2)$ and thus the spectral parameter is given by 
$
r= -i \frac{k-1}{2}. 
$
Now the Whittaker function has the property (see \cite[eqn. (4.21)]{DFI}) that 
$
W_{\alpha, \alpha -1/2} (y) = y^{\alpha} {\rm e}^{-y/2}.
$
Using this fact, we infer that (see \eqref{psi}) for $f(z) = y^{k/2}F(z)$,
$
\rho_f (n) = \frac{a_F (n)}{(4 \pi n)^{k/2}} = \frac{\psi_F(n)}{n^{1/2}(4 \pi)^{k/2}}
$
for $n \geq 1$, and $\rho_f (n) =0$ for $n \leq 0$. Similarly, when $f(z)=y^{k/2}\overline{F}(z)$, we have 
$
\rho_f (n) = \frac{\overline{a_F (n)}}{(4 \pi n)^{k/2}} = \frac{\overline{\psi_F(n)}}{n^{1/2}(4 \pi)^{k/2}}
$
for $n \geq 1$, and $\rho_f (n) =0$ for $n \leq 0$.
Accordingly,  for $f(z) =  y^{k/2}F(z)$ (resp. $f(z)=y^{k/2}\overline{F}(z)$) where $F$ is a holomorphic cusp form,
we define $
\nu_f(n)=\frac{\psi_F (n)}{(4 \pi)^{(k-1)/2}} \ \ \Bigl(\textnormal{resp.} \ \frac{\overline{\psi_F (n)}}{(4 \pi)^{(k-1)/2}}\Bigr)
$
for $n \geq 1$ and $\nu_f(n)=0$ otherwise. 

\subsection{Hecke operators }

The  definition of the $n$-th Hecke operator $T_{n, \chi} $, $n \geq 1$ acting  on the space of 
modular forms of level $q$, weight $k$, and character $\chi(\textnormal{mod }q)$ is given  by
\begin{equation*}
T_{n,\chi}\,: F(z)\mapsto   ( T_{n, \chi} F )( z) = \frac{1}{n} \sum_{ad = n} \chi(a)a^k \sum_{b ( \textnormal{mod } d )} F \left( \frac{az + b}{d} \right).
    \end{equation*}
For an eigenfunction $F$ of $T_n$, we shall denote the eigenvalue by $\lambda_F (n)$. 
If $F$ is a \textit{primitive form} (i.e., newform) then its Fourier coefficients $a_F (n)$ are related to the eigenvalues $\lambda_F (n)$ by 
\begin{equation}\label{basicr}
a_F (n)=a_F (1) \lambda_F (n),
\end{equation}
and, moreover,  $a_F (1) \neq 0$. 
Hence, the Fourier coefficients and the Hecke eigenvalues coincide up to a multiplicative factor that depends only on the form $F$. 
We define the action of the $n$-th Hecke operator  $T'_{n,\chi}$  on $\mathcal{C}_k(q, \chi)$ by (see \cite[Chap. 6]{DFI})
\begin{equation*}
T'_{n,\chi}\,: f(z)\mapsto   ( T'_{n, \chi} f )( z) =  \frac{1}{\sqrt{n}} \sum_{ad = n} \chi(a) \sum_{b ( \textnormal{mod } d )} f \left( \frac{az + b}{d} \right).
    \end{equation*}
Note that this definition is independent of the weight $k$. The Hecke theory for Maass forms is parallel to the theory for modular forms and an important fact is that there is an orthonormal basis (called Hecke basis) of Maass cusp forms consisting of forms that are common eigenfunctions of the 
Hecke operators $T'_{n, \chi}$ with $(n, q)=1$. 
 The forms in a Hecke basis will be called 
 \textit{Hecke-Maass cusp forms}.  A Hecke-Maass cusp form in the new subspace (consisting of forms that are not linear combination of forms induced from lower levels) is called a \textit{newform} or a \textit{primitive form}. 
Note that a Hecke-Maass cusp form of level one is trivially a
primitive form. 
The Hecke eigenvalue $\lambda_f(n)$ and the normalized Fourier coefficient $\nu_f(n)$ of a Hecke-Maass cusp form are related by 
\begin{equation}\label{basicrelation}
\nu_f(\pm n) = \nu_f(\pm 1) \lambda_f(n); n\geq 1.
\end{equation}
Moreover,  for a Maass cusp form $f$ which is not induced from a holomorphic form, we have the relation $\nu_f(-1) = \eps_f \nu_f(1),$
where $\eps_f= 1$ or $-1$ and the form $f$ is accordingly called \textit{even} or \textit{odd}. The following proposition is easy to check.
\begin{proposition}\label{relationhecke} 
Suppose $F$ is a cusp form of weight $k$, level $q$ and character $\chi (\textnormal{mod }q)$ and let $f(z) = y^{k/2}F(z)$ (resp. $f(z) = y^{k/2}\overline{F}(z)$) be 
the associated Maass cusp form in $\mathcal{C}_k (q, \chi)$ (resp.$\mathcal{C}_{-k} (q, \chi)$) with Laplace 
eigenvalue $\lambda(k/2)$. Then $F$ is an eigenfunction of the $n$-th Hecke operator if and only if $f$ is. Moreover, the $n$-th Hecke eigenvalues $\lambda_F (n)$ and $\lambda_f (n)$ of $F$ and $f$ respectively are related by
\[
\lambda_f (n) = \frac{ \lambda_F (n)}{n^{(k-1)/2}} \ \Bigl(\textnormal{resp. } \frac{ \overline{\lambda_F (n)}}{n^{(k-1)/2}}\Bigr).
\]
\end{proposition}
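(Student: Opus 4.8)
The plan is to verify the claimed statement by tracing through the explicit relationship between the holomorphic form $F$ and its associated Maass form $f$, using the two compatible definitions of Hecke operators $T_{n,\chi}$ and $T'_{n,\chi}$ given in the excerpt. The key observation is that the map $F(z) \mapsto f(z) = y^{k/2} F(z)$ (and likewise the conjugated version) intertwines the two Hecke actions up to the normalizing power of $n$.

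\medskip

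\emph{Step 1: The intertwining identity.} First I would compute directly how $T'_{n,\chi}$ acts on $f(z) = y^{k/2}F(z)$. Writing $z' = (az+b)/d$ with $ad = n$, we have $\Im z' = (a/d)\, y = (a^2/n)\,y$, so that
\[
\bigl(\Im z'\bigr)^{k/2} = \Bigl(\frac{a^2}{n}\Bigr)^{k/2} y^{k/2} = \frac{a^k}{n^{k/2}}\, y^{k/2}.
\]
Hence
\[
f\Bigl(\frac{az+b}{d}\Bigr) = \Bigl(\frac{a^2 y}{n}\Bigr)^{k/2} F\Bigl(\frac{az+b}{d}\Bigr)
= \frac{a^k}{n^{k/2}}\, y^{k/2}\, F\Bigl(\frac{az+b}{d}\Bigr).
\]
Plugging this into the definition of $T'_{n,\chi}$ and comparing with the definition of $T_{n,\chi}$ yields
\[
\bigl(T'_{n,\chi}f\bigr)(z) = \frac{1}{\sqrt n}\sum_{ad=n}\chi(a)\sum_{b(\bmod d)} \frac{a^k}{n^{k/2}}\, y^{k/2}\, F\Bigl(\frac{az+b}{d}\Bigr)
= \frac{1}{n^{(k-1)/2}}\, y^{k/2}\,\bigl(T_{n,\chi}F\bigr)(z).
\]
For the conjugated case $f(z) = y^{k/2}\overline{F}(z)$ the same computation applies after conjugation (note $a^k$ is real and $\chi(a)$ gets conjugated, but since $\chi\overline\chi$... more carefully: $\overline{F}$ transforms under $\overline\chi$, and one checks $\overline{(T_{n,\chi}F)(z)} = (T_{n,\overline\chi}\,\overline F)(z)$), giving $(T'_{n,\chi}f)(z) = n^{-(k-1)/2}\, y^{k/2}\,\overline{(T_{n,\chi}F)(z)}$.

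\medskip

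\emph{Step 2: Conclusion.} Since $y^{k/2}$ (resp.\ its role after conjugation) is a fixed nonvanishing factor, the identity in Step 1 shows that $T'_{n,\chi}f = c\, f$ for a scalar $c$ precisely when $T_{n,\chi}F = c\, n^{(k-1)/2}\, F$, i.e.\ $F$ is a $T_{n,\chi}$-eigenfunction if and only if $f$ is a $T'_{n,\chi}$-eigenfunction, and in that case the eigenvalues are related by $\lambda_f(n) = n^{-(k-1)/2}\lambda_F(n)$ (resp.\ $n^{-(k-1)/2}\overline{\lambda_F(n)}$), which is exactly the claim. One should also remark that this eigenvalue normalization is consistent with the Fourier-coefficient normalization recorded earlier: combining \eqref{basicr}, \eqref{psi}, and the formula for $\nu_f(n)$ in the holomorphic case reproduces \eqref{basicrelation}, so no separate check of compatibility is needed.

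\medskip

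\emph{Main obstacle.} There is no deep difficulty here; the proposition is genuinely ``easy to check'' as stated. The only place demanding minor care is the bookkeeping in the conjugated case $f = y^{k/2}\overline F$: one must be careful that $\overline F$ lives in $\mathcal{C}_{-k}(q,\overline\chi)$ rather than $\mathcal{C}_{-k}(q,\chi)$, that the Hecke operator $T_{n,\chi}$ applied to $F$ conjugates correctly, and that the weight sign flip $k \mapsto -k$ does not affect the computation since $T'_{n,\chi}$ is weight-independent by construction. Keeping the characters straight and confirming the reality of the relevant factors ($a^k \in \R$ for $a \geq 1$) is the entire content of the verification.
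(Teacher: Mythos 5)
Your verification is correct and is exactly the direct computation the paper has in mind when it calls the proposition ``easy to check'' (the paper itself supplies no proof): the intertwining identity $(T'_{n,\chi}f)(z) = n^{-(k-1)/2}\,y^{k/2}\,(T_{n,\chi}F)(z)$ coming from $\Im\bigl((az+b)/d\bigr) = (a^2/n)\,y$ is the whole content. Your side remark about the conjugated case is also well taken --- strictly $y^{k/2}\overline{F}$ has nebentypus $\overline{\chi}$, a point the paper glosses over but which is harmless since only level one (trivial $\chi$) is used later.
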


By the above proposition, \eqref{basicr} and \eqref{basicrelation}, for any primitive Maass cusp form $f$, whether or not it is induced from a holomorphic
form, we have that
$$
\nu_f(n) = \nu_f(1)\lambda_f(n)
$$
for $n \geq 1$ and 
$
\nu_f(1) \neq 0.
$
Hence, for any fixed primitive Maass cusp form $f$, the normalized Fourier coefficients $\nu_f(n)$ for $n \geq 1$ and the Hecke eigenvalues $\lambda_f(n)$ are the same up to multiplcation by a nonzero constant.
{\em From now on, whenever we talk of primitive forms we mean 
primitive Maass cusp forms with the understanding that holomorphic modular forms are included in them}.

\subsection{The Ramanujan Conjecture}
The general Ramanujan conjecture asserts that for a primitive Maass cusp form $f \in \mathcal{C}_k (q, \chi)$ and a prime $p$, $p \nmid q$,
 the Hecke eigenvalue $\lambda_f (p)$ satisfies the bound
\begin{equation}\label{Ramu}
|\lambda_f (p)| \leq 2. 
\end{equation}
Although this conjecture is wide open, we know from the works of Kim and Shahidi, Kim, and Kim and Sarnak \cite{KS, KS1, KS2} that
\begin{align}\label{7/64}
|\lambda_f (p)| \leq 2 p^{7/64}.
\end{align}
For forms induced from holomorphic forms, the Ramanujan conjecture is a famous
theorem due to Deligne. A related conjecture concerns the size of the Laplace eigenvalues $\lambda$. Indeed, the Selberg eigenvalue conjecture, which 
says that for Maass cusp forms of weight zero, the spectral parameter $r$ should always be real (see \eqref{1/4+r2}),
can be interpreted as the Ramanujan conjecture for the infinite prime.
If Selberg's conjecture is true, then we must have $\lambda\geq 1/4$. If this is not the case, then \eqref{1/4+r2} implies that 
$r$  is purely imaginary with $|r| <1/2$. Even though we do not know the truth of the Selberg conjecture, the
 work of Kim and Sarnak cited above also gives the bound
$
| r | \leq \frac{7}{64}
$
if such exceptional eigenvalues $\lambda <1/4$ do actually occur.

\section{Moments of Hecke eigenvalues} 
For a fixed Hecke-Maass cusp form $f$, we require bounds for sums of the type 
$
\sum_{1 \leq n \leq X} |\lambda_f(n)|^{2j}.
$
 Rankin \cite{ran} and Selberg \cite{sel} had independently treated similar sums in the case of 
 holomorphic form for $j=1$. We can use standard tools of analytic number theory coupled with knowledge of analytic properties
of higher degree $L$-functions to bound such moments. 
Works of Gelbart and Jacquet \cite{GJ}, and
of Kim and Shahidi \cite{KS1}, \cite{KS2} are sufficient 
to prove the following theorem. 

\begin{theorema}\label{LauLu}
Let $f$ be a Hecke-Maass cusp form for the group ${\rm SL}(2, \mathbb{Z})$.
We have, for any $X \geq 1$ the equality 
\begin{equation}\label{L2}
\sum_{1 \leq n \leq X} |{\lambda_f(n)}|^{2} = C_f X + O_{f}(X^{3/5}),
\end{equation} 
where $C_f >0$ is a constant that depends only on the form $f$ and the same is true for the implied constant.
For $j= 2, 3$, and $4$, we have,
\begin{equation}\label{moment}
\sum_{1 \leq n \leq X} |{\lambda_f(n)}|^{2j} = X P_{f,j}(\log X)+ O_f(X^{c_{j}+\eps})
\end{equation}
for any $\eps>0$. Here $c_{j}$\rq{}s are explicit constants strictly smaller than one and $P_{f,j}$\rq{}s are polynomials of degree $ 1, 4$, and $13$ respectively and their coefficients depend on $f$. 
\end{theorema}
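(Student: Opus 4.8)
The plan is to derive \eqref{L2} and \eqref{moment} from the analytic behaviour of the Dirichlet series
\[
D_{2j}(s) := \sum_{n \geq 1} |\lambda_f(n)|^{2j}\, n^{-s},
\]
studied through its Euler product. Since $f$ has level one it is self-dual, so $\lambda_f(n) \in \R$ and $|\lambda_f(n)|^{2j} = \lambda_f(n)^{2j}$ is multiplicative, while at each prime $p$ the Satake parameters are $\alpha_p, \beta_p$ with $\alpha_p\beta_p = 1$ and $\lambda_f(p^k) = \alpha_p^{k} + \alpha_p^{k-2} + \cdots + \alpha_p^{-k} = \lambda_{\mathrm{sym}^k f}(p)$, with $|\alpha_p| \leq p^{7/64}$ by \eqref{7/64}. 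The first step is to compute the Euler product of $D_{2j}(s)$: decomposing the $2j$-th tensor power of the standard $2$-dimensional representation of $\mathrm{SU}(2)$ gives the identity $(\alpha_p + \beta_p)^{2j} = \sum_{i=0}^{j} m_{j,i}\, \lambda_{\mathrm{sym}^{2i} f}(p)$ with $m_{j,i} = \binom{2j}{j-i} - \binom{2j}{j-i-1} \in \Z_{\geq 0}$, so that $m_{j,j} = 1$ and $m_{j,0} = \frac{1}{j+1}\binom{2j}{j} =: C_j$, a Catalan number. Matching local factors then produces a factorization
\[
D_{2j}(s) = \zeta(s)^{C_j} \prod_{i=1}^{j} L(s, \mathrm{sym}^{2i} f)^{m_{j,i}} \cdot H_j(s),
\]
where $H_j(s)$ is an Euler product that converges absolutely in some half-plane $\Re s > \sigma_j$ with $\sigma_j < 1$; it is here that the bound $|\alpha_p| \leq p^{7/64}$ is used, to keep the Dirichlet coefficients of $H_j$ under control. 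For $j = 1$ this is the classical identity $D_2(s) = \zeta(s) L(s, \mathrm{sym}^2 f)/\zeta(2s)$.

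The second step is to record the analytic properties of the symmetric power $L$-functions $L(s, \mathrm{sym}^{2i} f)$ for $1 \leq i \leq j \leq 4$. By the work of Gelbart--Jacquet, Kim--Shahidi and Kim, $\mathrm{sym}^m f$ is a cuspidal automorphic representation of $\mathrm{GL}(m+1)$ for every $m \leq 4$, whence $L(s, \mathrm{sym}^m f)$, $1 \leq m \leq 4$, is entire, of polynomial growth in vertical strips, and non-vanishing on $\Re s = 1$. For the higher powers that appear when $j = 3$ or $j = 4$ --- namely $\mathrm{sym}^6 f$ and $\mathrm{sym}^8 f$, which are not known to be automorphic --- I would exploit the branching identities that express a Rankin--Selberg convolution of two symmetric powers as a product of symmetric power $L$-functions (for instance $L(s, \mathrm{sym}^3 f \times \mathrm{sym}^3 f) = L(s, \mathrm{sym}^6 f) L(s, \mathrm{sym}^4 f) L(s, \mathrm{sym}^2 f)\, \zeta(s)$, and $\mathrm{sym}^4 f \times \mathrm{sym}^4 f$ reaches $\mathrm{sym}^8 f$): since $2j \leq 8$, every symmetric power $L$-function that occurs can be written as a ratio of Rankin--Selberg $L$-functions of the automorphic representations $f, \mathrm{sym}^2 f, \mathrm{sym}^3 f, \mathrm{sym}^4 f$, and the standard theory of Rankin--Selberg $L$-functions (Jacquet--Piatetski-Shapiro--Shalika, Shahidi, Moeglin--Waldspurger, Jacquet--Shalika) then shows that every factor occurring in $D_{2j}(s)$ is holomorphic and non-vanishing on $\Re s = 1$ and extends, with at most polynomial growth, to a region $\Re s \geq \sigma_j$, the only singularity of $D_{2j}(s)$ there being the pole of order $C_j$ at $s = 1$ contributed by $\zeta(s)^{C_j}$.

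Finally I would insert a truncated Perron formula for $D_{2j}(s) X^s/s$ and shift the contour to $\Re s = c_j + \eps$ for a suitable $c_j \in (\sigma_j, 1)$. The only singularity crossed is the pole of order $C_j$ at $s = 1$; its residue is $X$ times a polynomial in $\log X$ of degree $C_j - 1$, whose coefficients depend on $f$ and whose leading coefficient is positive (it is built from $\prod_i L(1, \mathrm{sym}^{2i} f)^{m_{j,i}}$ and the value $H_j(1)$, positivity following from $L(1, \mathrm{sym}^{2i} f) > 0$ and the absolute convergence of $H_j$ at $s = 1$). For $j = 1, 2, 3, 4$ one has $C_j = 1, 2, 5, 14$, hence polynomials of degrees $0, 1, 4, 13$, and for $j = 1$ a simple pole with residue $C_f X$, $C_f > 0$. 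The shifted integral is bounded by $O_f(X^{c_j + \eps})$ using the convexity bounds for the $L$-factors together with a mean-value estimate for $\int (1 + |t|)^{-1} |D_{2j}(c_j + \eps + it)|\, \diff t$; for $j = 1$ the sharper exponent $3/5$ is the classical Rankin--Selberg bound obtained from the explicit shape of $D_2(s)$. The crux of the whole argument --- and the reason one cannot go beyond $j = 4$ --- is the second step, that is, securing enough control near $\Re s = 1$ over the symmetric power $L$-functions $L(s, \mathrm{sym}^m f)$ for $m$ up to $2j$: for $m > 4$ this is available only because, as long as $2j \leq 8$, such an $L$-function can be realized inside a Rankin--Selberg convolution of the known automorphic representations $\mathrm{sym}^a f$, $a \leq 4$. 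The remaining points --- the branching combinatorics, the precise convergence range of $H_j$, the residue computation, and the mean-value estimate --- are routine.
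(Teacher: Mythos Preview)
Your outline is correct and is precisely the standard approach; note, however, that the paper does not itself prove Theorem~A but cites Lau--L\"{u} \cite{Lau-Lu} for \eqref{moment} (and the classical Rankin--Selberg estimate for \eqref{L2}), indicating only that the automorphy of $\mathrm{sym}^m f$ for $m\leq 4$ due to Gelbart--Jacquet and Kim--Shahidi is the key input. Your sketch --- factoring $D_{2j}(s)$ through symmetric-power $L$-functions, reaching $\mathrm{sym}^6 f$ and $\mathrm{sym}^8 f$ via Rankin--Selberg convolutions of the known automorphic lifts, and extracting the main term by a contour shift past the pole of order $C_j$ at $s=1$ --- is exactly the Lau--L\"{u} argument, and the resulting polynomial degrees $C_j-1=0,1,4,13$ match the statement.
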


The first one is the well-known Rankin-Selberg estimate and a detailed proof of \eqref{moment} with explicit numerical
constants appears in
\cite{Lau-Lu}. See, in particular, Remark 1.7 and its proof at the end of the paper.
 Note that they only consider what is defined 
as a weight zero Maass cusp form here but their proof
works for general Hecke-Maass forms on ${\rm SL}(2, \mathbb{Z})$ of any weight. This can be seen
by noting that the shape of the $L$--function and the Gamma factors remain the same (see \cite[eqn. (8.17)]{DFI}) if we take the more general 
definition of Maass form as 
considered here. 
 We note the following obvious corollary of \eqref{L2} which will be required later. It can be improved slightly (by a fractional 
exponent of $\log X$) as mentioned in Remark 2 in the introduction. 
\begin{corollary}
For any Hecke-Maass cusp form $f$ for the group ${\rm SL}(2, \mathbb{Z})$, and any $X \geq 1$, we have
\begin{equation}\label{L1}
\sum_{1 \leq n \leq X} |{\lambda_f(n)}| \ll_f X,
\end{equation}
where the implied constant depends only on $f$.
\end{corollary}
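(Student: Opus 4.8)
The plan is to deduce (\ref{L1}) directly from the Rankin--Selberg estimate (\ref{L2}) by a single application of the Cauchy--Schwarz inequality. Concretely, I would write
\[
\sum_{1 \leq n \leq X} |\lambda_f(n)|
\;\leq\; \Bigl(\sum_{1 \leq n \leq X} 1\Bigr)^{1/2}\Bigl(\sum_{1 \leq n \leq X} |\lambda_f(n)|^{2}\Bigr)^{1/2},
\]
bound the first factor trivially by $X^{1/2}$ (since $\lfloor X\rfloor \leq X$), and bound the second factor using (\ref{L2}), which gives $\sum_{1 \leq n \leq X} |\lambda_f(n)|^{2} = C_f X + O_f(X^{3/5}) \ll_f X$ for $X \geq 1$. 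Multiplying the two bounds yields $\sum_{1\le n\le X}|\lambda_f(n)| \ll_f X^{1/2}\cdot (C_f X)^{1/2} \ll_f X$, with the implied constant depending only on $f$ through $C_f$ and the implied constant in (\ref{L2}).

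There is essentially no obstacle here: the only point worth a word is that one must invoke the full strength of (\ref{L2}) rather than a pointwise bound on $\lambda_f(n)$, since under the current hypotheses we only know (\ref{7/64}) for individual eigenvalues, which by itself would give the weaker $\sum_{n\le X}|\lambda_f(n)| \ll_f X^{1+7/64}d_{?}$-type bounds. The second-moment input sidesteps this entirely, and the range $X\ge 1$ causes no difficulty because the error term $O_f(X^{3/5})$ is dominated by the main term for all $X \geq 1$ (one may absorb small $X$ into the implied constant if desired).

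Alternatively, and equivalently, I could bypass Cauchy--Schwarz and note that the Dirichlet series $\sum_n |\lambda_f(n)|^2 n^{-s}$ has non-negative coefficients and, by the theory of the Rankin--Selberg $L$-function $L(s, f\times \bar f)$ together with the factorization into $\zeta(s)$ and $L(s,\mathrm{sym}^2 f)$ (as used in proving Theorem~\ref{LauLu}), a simple pole at $s=1$ and holomorphic continuation slightly to the left; a standard Tauberian or contour argument then recovers $\sum_{n\le X}|\lambda_f(n)|^2 \ll_f X$ and hence, again by Cauchy--Schwarz, (\ref{L1}). But since (\ref{L2}) is already stated and available, the first route is the shortest and I would record only that one. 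The remark following the statement (that the bound can be sharpened by a power of $\log X$, cf. Remark~2) is not needed for (\ref{L1}) itself and would be left to the cited references.
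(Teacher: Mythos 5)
Your proof is correct and is exactly the intended argument: the paper states \eqref{L1} as an ``obvious corollary'' of the Rankin--Selberg estimate \eqref{L2}, and the implicit deduction is precisely the Cauchy--Schwarz step you write out. Nothing further is needed.
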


\subsection{Moments of Hecke eigenvalues at primes}
The following bound on the second moment of the Hecke eigenvalues at primes
is a consequence of PNT for the Rankin-Selberg $L$-function $L(s, f \otimes f)$. See, for example, \cite[Cor. 4.2]{LIU-YE}. Similar 
results were obtained by Rankin \cite{rankin} and Perelli
\cite{Perelli} in the context of holomorphic forms.
\begin{theorema}\label{liuye}
For a Hecke-Maass cusp form $f$ for the group ${\rm SL}(2, \mathbb{Z})$, we have the bound
\begin{equation*}
\sum_{1\leq n \leq X} \Lambda (n)\,|\lambda_f(n)|^2 \ll_{f} X,
\end{equation*}
for any $X \geq 2$.
\end{theorema}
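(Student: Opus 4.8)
The plan is to derive the bound from the Prime Number Theorem for the Rankin--Selberg $L$-function $L(s,f\otimes f)$, following the strategy of \cite[Cor.~4.2]{LIU-YE} (for holomorphic $f$ this goes back to Rankin \cite{rankin} and Perelli \cite{Perelli}). Since $f$ has level one it is self-dual with trivial central character and is not dihedral, so by Gelbart and Jacquet \cite{GJ} the symmetric square lift $\mathrm{sym}^2 f$ is a cuspidal automorphic representation of $\mathrm{GL}(3)$; hence $L(s,\mathrm{sym}^2 f)$ is entire, of finite order, satisfies a standard functional equation, and does not vanish on the line $\Re s = 1$ (by the Jacquet--Shalika non-vanishing theorem, or by the classical de la Vall\'ee Poussin argument applied to the product $\zeta(s)L(s,\mathrm{sym}^2 f)$). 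From the factorisation
\begin{equation*}
L(s,f\otimes f) = \zeta(s)\,L(s,\mathrm{sym}^2 f)
\end{equation*}
it follows that $L(s,f\otimes f)$ is holomorphic for $\Re s\ge 1$ except for a simple pole at $s=1$ with residue $1$, and is non-vanishing on $\Re s=1$.

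The second step is to match the sum in the statement with the Dirichlet coefficients of $-L'/L(s,f\otimes f)$. Let $\alpha_p,\beta_p$ be the Satake parameters of $f$ at $p$, so that $\alpha_p\beta_p=1$ and $\lambda_f(p)=\alpha_p+\beta_p$; expanding the Euler product one gets
\begin{equation*}
-\frac{L'}{L}(s,f\otimes f)=\sum_{n\ge1}\frac{\Lambda_{f\otimes f}(n)}{n^s},\qquad
\Lambda_{f\otimes f}(p^k)=\bigl(2+\alpha_p^{2k}+\beta_p^{2k}\bigr)\log p,
\end{equation*}
and $\Lambda_{f\otimes f}(p^k)\ge0$ for every prime power (one checks the two cases $|\lambda_f(p)|\le2$ and $|\lambda_f(p)|>2$ separately). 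At a prime one has $\Lambda_{f\otimes f}(p)=|\lambda_f(p)|^2\log p=\Lambda(p)|\lambda_f(p)|^2$, whereas for $k\ge2$ the bound \eqref{7/64} together with the Hecke relations giving $\lambda_f(p^k)$ in terms of $\alpha_p,\beta_p$ shows that both $\Lambda_{f\otimes f}(p^k)$ and $\Lambda(p^k)|\lambda_f(p^k)|^2$ are $O_k(p^{7k/32}\log p)$. Summing over prime powers this yields
\begin{equation*}
\sum_{n\le X}\Lambda(n)|\lambda_f(n)|^2=\sum_{n\le X}\Lambda_{f\otimes f}(n)+O\bigl(X^{23/32+\eps}\bigr)
\end{equation*}
(for holomorphic $f$ the correction term is trivially $O(\sqrt X\log X)$ by Deligne's bound).

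Finally, the Dirichlet series $-L'/L(s,f\otimes f)$ has non-negative coefficients, converges for $\Re s>1$, and by the first step extends continuously to $\Re s\ge1$ apart from a simple pole at $s=1$ of residue $1$; a Tauberian theorem of Wiener--Ikehara type then gives $\sum_{n\le X}\Lambda_{f\otimes f}(n)\sim X$, and in particular $\ll_f X$, which together with the last display proves the theorem (the effective PNT of \cite{LIU-YE}, combining a classical zero-free region for $\zeta(s)L(s,\mathrm{sym}^2 f)$ with convexity bounds, yields the same conclusion with a saving of $\exp(-c\sqrt{\log X})$). The only substantial ingredient, and the main obstacle, is this analytic input: the cuspidality of $\mathrm{sym}^2 f$ on $\mathrm{GL}(3)$ and the non-vanishing of $L(s,\mathrm{sym}^2 f)$ on $\Re s=1$; everything else is routine manipulation of Euler products and Satake parameters. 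Since these facts are already assembled in \cite{LIU-YE}, in the paper the cleanest route is simply to invoke \cite[Cor.~4.2]{LIU-YE} after the elementary reduction above.
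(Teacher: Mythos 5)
Your argument is correct and takes essentially the same route as the paper, which simply quotes \cite[Cor.~4.2]{LIU-YE} (with \cite{rankin} and \cite{Perelli} for the holomorphic case) exactly as you recommend at the end; your sketch is the standard argument behind that citation (factorization $\zeta(s)L(s,\mathrm{sym}^2 f)$, cuspidality of the Gelbart--Jacquet lift, non-vanishing on $\Re s=1$, nonnegativity of $\Lambda_{f\otimes f}$, and removal of the prime-power tail via the Kim--Sarnak bound). One small slip: the residue of $L(s,f\otimes f)$ at $s=1$ is $L(1,\mathrm{sym}^2 f)$, not $1$; but the Tauberian step only uses the residue of $-L'/L$, which is the pole order and hence indeed $1$, so the conclusion is unaffected.
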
 
 Note that if $f$ was a holomorphic 
form then the theorem would follow trivially from PNT and Deligne\rq{}s bound on Hecke eigenvalues. \\
From the above theorem, we deduce:
\begin{corollary} For a Hecke-Maass cusp form $f$ for the group ${\rm SL}(2, \mathbb{Z})$, we have the estimates
\begin{equation}\label{L21}
\sum_{1\leq p \leq X} |\lambda_f(p)|\log p \ll_{f} X,
\end{equation}
and
\begin{equation}\label{L22}
\sum_{1\leq p \leq X} |\lambda_f(p)| \ll_{f} X/ \log X,
\end{equation}
for any $X \geq 2$.
\end{corollary}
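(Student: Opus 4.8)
The plan is to deduce both bounds directly from Theorem \ref{liuye} by elementary manipulations. First I would prove \eqref{L21}. Writing $|\lambda_f(p)|\log p = |\lambda_f(p)|(\log p)^{1/2}\cdot(\log p)^{1/2}$ and applying the Cauchy--Schwarz inequality gives
\[
\sum_{1\leq p\leq X}|\lambda_f(p)|\log p \leq \Bigl(\sum_{p\leq X}\log p\Bigr)^{1/2}\Bigl(\sum_{p\leq X}|\lambda_f(p)|^2\log p\Bigr)^{1/2}.
\]
The first factor is $\ll X^{1/2}$ by Chebyshev's estimate (equivalently, by the prime number theorem), while the second factor is $\ll_f X^{1/2}$: since the prime-power terms in $\sum_{n\leq X}\Lambda(n)|\lambda_f(n)|^2$ are nonnegative, we have $\sum_{p\leq X}|\lambda_f(p)|^2\log p \leq \sum_{n\leq X}\Lambda(n)|\lambda_f(n)|^2 \ll_f X$ by Theorem \ref{liuye}. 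Multiplying the two bounds yields \eqref{L21}.

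For \eqref{L22} I would split the range of summation at $\sqrt X$. For the small primes, the crude bound \eqref{L1} gives $\sum_{p\leq \sqrt X}|\lambda_f(p)| \leq \sum_{n\leq \sqrt X}|\lambda_f(n)| \ll_f X^{1/2} \ll X/\log X$. For $\sqrt X < p \leq X$ one has $\log p > \frac{1}{2}\log X$, hence
\[
\sum_{\sqrt X < p\leq X}|\lambda_f(p)| < \frac{2}{\log X}\sum_{p\leq X}|\lambda_f(p)|\log p \ll_f \frac{X}{\log X}
\]
by \eqref{L21}. Adding the two contributions gives \eqref{L22}.

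Since each step is a one-line application of a standard inequality together with Theorem \ref{liuye} and \eqref{L1}, there is no real obstacle here; the only mild points to keep track of are that, in bounding $\sum_{p\leq X}|\lambda_f(p)|^2\log p$, the extra prime-power contributions carried by $\Lambda$ are harmless because they are nonnegative, and that the range $p\leq\sqrt X$ in \eqref{L22} must be treated separately by the trivial bound, since Cauchy--Schwarz by itself would only reproduce \eqref{L21}.
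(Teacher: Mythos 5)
Your proof is correct, and it is essentially the deduction the paper intends: the corollary is stated without proof as an immediate consequence of Theorem \ref{liuye}, and the standard route is exactly your Cauchy--Schwarz step (using Chebyshev and the nonnegativity of the prime-power terms) for \eqref{L21}, followed by splitting at $\sqrt{X}$ (with \eqref{L1} handling the small primes) for \eqref{L22}.
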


We also need a lower bound for the above sum and we follow the approach of 
 Holowinsky \cite[\S 4.1]{Roman} in proving the following proposition. See \cite{Ran2}, \cite{Wu}, and \cite{Wu-Xu} for 
 more precise results in this direction. 
\begin{proposition}\label{strongortho} For a Hecke-Maass cusp form $f$ of level one we have the bound
\begin{equation}\label{lowerbd}
 \sum_{1\leq p \leq X} |\lambda_f(p)| \gg_{f} X/ \log X,
\end{equation}
for all $X$ sufficiently large.
\end{proposition}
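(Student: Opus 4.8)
The plan is to follow Holowinsky's argument from \cite[\S 4.1]{Roman}: the lower bound will be extracted from two applications of the Cauchy--Schwarz inequality, fed by a second and a fourth moment estimate for $\lambda_f(p)$ averaged over the primes $p\le X$. Recall that the Hecke eigenvalues $\lambda_f(n)$ of a Hecke--Maass cusp form for ${\rm SL}(2,\Z)$ are real, the Hecke operators being self-adjoint; in particular $|\lambda_f(p)|^{2}=\lambda_f(p)^{2}$ and $|\lambda_f(p)|^{3}=|\lambda_f(p)|\,\lambda_f(p)^{2}$. Writing $\lambda_f(p)^{2}=|\lambda_f(p)|^{1/2}\cdot|\lambda_f(p)|^{3/2}$ and applying Cauchy--Schwarz gives
\begin{equation}\label{CSmain}
\sum_{p\le X}|\lambda_f(p)|^{2}\le\Bigl(\sum_{p\le X}|\lambda_f(p)|\Bigr)^{1/2}\Bigl(\sum_{p\le X}|\lambda_f(p)|^{3}\Bigr)^{1/2},
\end{equation}
so that Proposition \ref{strongortho} will follow once we prove the two bounds
\begin{equation}\label{twomoments}
\sum_{p\le X}|\lambda_f(p)|^{2}\gg_{f}\frac{X}{\log X},
\qquad
\sum_{p\le X}|\lambda_f(p)|^{3}\ll_{f}\frac{X}{\log X}.
\end{equation}

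For the moment estimates \eqref{twomoments} I would use the Hecke relations at a prime $p$, namely $\lambda_f(p)^{2}=\lambda_f(p^{2})+1$ and therefore $\lambda_f(p)^{4}=\lambda_f(p^{4})+3\lambda_f(p^{2})+2$, in which $\lambda_f(p^{2})$ and $\lambda_f(p^{4})$ are the Hecke eigenvalues at $p$ of the symmetric square lift ${\rm sym}^{2}f$ and the symmetric fourth power lift ${\rm sym}^{4}f$. Since $f$ has level one, ${\rm sym}^{2}f$ and ${\rm sym}^{4}f$ are cuspidal automorphic representations of ${\rm GL}(3)$ and ${\rm GL}(5)$ respectively, by Gelbart--Jacquet \cite{GJ} and Kim \cite{KS2}, so the $L$-functions $L(s,{\rm sym}^{2}f)$ and $L(s,{\rm sym}^{4}f)$ are entire. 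The prime number theorem for these two $L$-functions, after the routine removal of prime-power contributions, yields $\sum_{p\le X}\lambda_f(p^{2})=o(X/\log X)$ and $\sum_{p\le X}\lambda_f(p^{4})=o(X/\log X)$; combined with $\sum_{p\le X}1\sim X/\log X$ this gives $\sum_{p\le X}|\lambda_f(p)|^{2}\sim X/\log X$ and $\sum_{p\le X}|\lambda_f(p)|^{4}\sim 2X/\log X$. The first of these is the first bound in \eqref{twomoments}, while the second bound in \eqref{twomoments} follows from one further Cauchy--Schwarz step,
\[
\sum_{p\le X}|\lambda_f(p)|^{3}=\sum_{p\le X}|\lambda_f(p)|\,\lambda_f(p)^{2}\le\Bigl(\sum_{p\le X}|\lambda_f(p)|^{2}\Bigr)^{1/2}\Bigl(\sum_{p\le X}|\lambda_f(p)|^{4}\Bigr)^{1/2}\ll_{f}\frac{X}{\log X}.
\]

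Substituting both bounds of \eqref{twomoments} into \eqref{CSmain} and rearranging then gives $\sum_{p\le X}|\lambda_f(p)|\gg_{f}X/\log X$ for all sufficiently large $X$, which is the assertion of the proposition. The one ingredient here that is not elementary is the fourth moment estimate, and with it the cuspidality of ${\rm sym}^{4}f$ (Kim) and the prime number theorem on ${\rm GL}(5)$; this is also precisely where the level-one hypothesis enters essentially, since it is what guarantees that ${\rm sym}^{2}f$ and ${\rm sym}^{4}f$ are cuspidal and allows Kim's theorem to be applied. The remaining ingredients --- the two Cauchy--Schwarz steps, the reality of the $\lambda_f(p)$, and the treatment of prime powers in the prime number theorems involved --- are standard.
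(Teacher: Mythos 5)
Your argument is correct, but it is not the route the paper takes. The paper's proof (the one that actually follows Holowinsky \cite[\S 4.1]{Roman}) constructs an explicit even polynomial minorant of $|x|$, namely $c_0+c_1(x^2-1)+c_2(x^4-2)+c_3(x^6-5)\le |x|$ with $c_0>0$ (e.g.\ $0.01+0.09(x^2-1)+0.1(x^4-2)-0.05(x^6-5)$), rewrites the shifted powers via the Hecke relation \eqref{H1} as combinations of $\lambda_f(p^2),\lambda_f(p^4),\lambda_f(p^6)$, and then invokes $\sum_{p\le X}\lambda_f(p^j)=o(X/\log X)$ for the relevant symmetric powers (citing \cite[eqn. (2.23)]{Brumley}), so that the constant term $c_0\pi(X)$ survives. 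Your proof instead interpolates moments: Cauchy--Schwarz in the form $\sum_{p\le X}\lambda_f(p)^2\le\bigl(\sum_{p\le X}|\lambda_f(p)|\bigr)^{1/2}\bigl(\sum_{p\le X}|\lambda_f(p)|^3\bigr)^{1/2}$, an asymptotic for the second and fourth moments via $\lambda_f(p)^2=\lambda_f(p^2)+1$ and $\lambda_f(p)^4=\lambda_f(p^4)+3\lambda_f(p^2)+2$ together with the prime number theorem for $L(s,{\rm sym}^2f)$ and $L(s,{\rm sym}^4f)$, and a second Cauchy--Schwarz to bound the third moment. Both arguments are sound; the trade-off is that your version needs analytic input only for ${\rm sym}^2f$ and ${\rm sym}^4f$ (cuspidal by Gelbart--Jacquet \cite{GJ} and Kim \cite{KS}, which is where level one enters, exactly as you say), whereas the paper's polynomial requires information up to ${\rm sym}^6f$; on the other hand the polynomial method is more flexible for optimizing constants by raising the degree, which is the direction pursued in \cite{Ran2}, \cite{Wu}, \cite{Wu-Xu}. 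Incidentally, your route even gives a cleaner asymptotic constant, $\sum_{p\le X}|\lambda_f(p)|\ge\bigl(\tfrac{1}{\sqrt2}-o(1)\bigr)X/\log X$, versus the $0.01\,\pi(X)$ coming from the paper's particular choice of polynomial. The only points to keep an eye on are the ones you already flag as routine: the prime-power terms in the two prime number theorems (harmless here thanks to the Kim--Sarnak bounds on the Satake parameters) and the passage from $\Lambda$-weighted sums to sums over primes by partial summation.
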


\begin{proof}
 We start with a polynomial of the form 
\[
 f(x) = c_0 +c_1(x^2 -1)+c_2(x^4-2)+c_3(x^6-5),
\]
where $c_i$'s are real, $c_0 >0$, and $f(x)$ satisfies
$
 f(x) \leq |x|
$
for all real values of $x$. For example, one can check that the polynomial
\[
 f(x) =0.01+(.09)(x^2-1)+(0.1)(x^4 -2) - (0.05)(x^6-5)
\]
satisfies all the conditions. Now, for each prime $p$, we put $x=\lambda_f(p)$ and then sum over them. The following
relations are consequences of Hecke's formula \eqref{H1}:\\
For any prime $p$, we have
\begin{align*}
 &{\lambda_f(p)}^2 -1 =\lambda_f(p^2),\\
 &{\lambda_f(p)}^4 -2 =\lambda_f(p^4)+3 \lambda_f(p^2),\\
& {\lambda_f(p)}^6 -5=\lambda_f(p^6)+5 \lambda_f(p^4)+9 \lambda_f(p^2).
\end{align*}
Now note that $\lambda_f(p^j)$ is the $p$-th coefficient of the 
$j$-th symmetric power $L$-function $L(s, \mathrm{sym}^j f)$. By facts known about symmetric power $L$-functions,
it follows (see, for example,   \cite[eqn. (2.23)]{Brumley}) that
\[
 \sum_{p \leq X} \lambda_f(p^j) = o(X/ \log X)
\]
as $X \longrightarrow \infty$ for $1 \leq j \leq 8$.
Therefore, by the above comments and PNT, we have the bound \eqref{lowerbd}.
\end{proof}

 \section{The Prime Number Theorem}
\subsection{Statements of the theorems}
Our goal in this section is to obtain   non-trivial bounds for the sums
$
\sum_{\substack{p \leq X }} \lambda_f(p)\chi(p)
$
and
$
\sum_{n \leq X} \mu(n)\lambda_f(n)\chi(n),
$
where $\chi$ is a Dirichlet character modulo $q$ and $f$ is a Hecke-Maass cusp form of level one. This will play an important role in the proof of the main theorem (see \S 7.1).
Recall that $\sum\limits_{n=1}^{\infty}\lambda_f(n)\chi(n)e(nz)$ is a primitive cusp form of level $q^2$, 
 provided $\chi (\textnormal{mod } q)$ is primitive (see \cite[\S 7.3]{Iw1}, \cite[Thm. 9]{Li}, and    \cite[\S 4, Remarks]{CI}). To see that the twisted
 form is an eigenfunctions of the Laplace operator, one notes that the Laplace operator commutes with the \textit{slash operator} (see \cite[\S4]{DFI}).
It is natural at this point to apply PNT for $L$-functions on ${\rm GL}(2)$ to estimate the above sums.
 A famous result due to Hoffstein and Ramakrishnan  \cite[Theorem C, part (3)]{HR} says the following.
\begin{theorema}\label{zf}
There is an effectively computable absolute constant $c>~0$ such that for any primitive form $f$ of some level $q$, spectral parameter $r$, and  weight $k$, the $L$-function  $L(s, f)$ does not vanish in the region 
$$
\sigma \geq 1 - \frac{c}{\log( q(|t|+|r| +2))}.
$$
\end{theorema}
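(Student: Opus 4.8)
The plan is to obtain this as a de la Vall\'ee Poussin zero-free region, from the non-negativity of the Dirichlet coefficients of a well-chosen Rankin--Selberg $L$-function together with the standard analytic theory of automorphic $L$-functions of low degree. I regard the primitive form $f$ as a unitary cuspidal automorphic representation $\pi$ of ${\rm GL}(2,\mathbb{A}_{\mathbb{Q}})$ (level one in the application, but an arbitrary $q$ here), and record that, as the parameters vary, the analytic conductor of $\pi$ and of its twists $\pi\otimes|\det|^{it}$ has size $\asymp q\,(|t|+|r|+2)^{2}$: the factor $(|r|+2)^{2}$ reflects the gamma factors of $L(s,\pi)$ --- for a holomorphic form of weight $k$ one has $r=-i(k-1)/2$, so this is $\asymp k^{2}$ --- and $(|t|+2)^{2}$ reflects the height of the putative zero. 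The analytic inputs I would use are: Gelbart--Jacquet, giving that ${\rm sym}^{2}\pi$, equivalently the adjoint ${\rm ad}\,\pi$, is automorphic on ${\rm GL}(3)$, with $L(s,\pi\times\widetilde{\pi})=\zeta(s)L(s,{\rm ad}\,\pi)$ and $L(s,{\rm ad}\,\pi)$ entire; the Jacquet--Piatetski-Shapiro--Shalika theory of Rankin--Selberg convolutions, for meromorphic continuation, functional equations, pole structure and conductors; Langlands' isobaric sums on ${\rm GL}(n)$; and the Kim--Sarnak bound \eqref{7/64} towards Ramanujan and Selberg, which keeps the archimedean factors harmless in the region at hand (for holomorphic $f$ this input is Deligne's theorem and is not needed, but the framework treats Maass forms uniformly).

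For a hypothetical zero $\rho=\beta+it_{0}$ of $L(s,\pi)$ I would run the classical positivity argument on the isobaric automorphic representation
\[
\Pi \;=\; \mathbf{1}\ \boxplus\ \pi\otimes|\det|^{it_{0}}\ \boxplus\ \widetilde{\pi}\otimes|\det|^{-it_{0}}
\]
of ${\rm GL}(5)$ and its Rankin--Selberg square $L(s,\Pi\times\widetilde{\Pi})$. Its local factors are those of the Rankin--Selberg product of an automorphic representation with its own contragredient, so $-\tfrac{d}{ds}\log L(s,\Pi\times\widetilde{\Pi})$ is a Dirichlet series with non-negative coefficients (at a prime $p$ the coefficient of $p^{-ms}$ is a positive multiple of $\big|\sum_{i,k}\alpha_{i,k}(p)^{m}\big|^{2}$, in terms of Satake parameters); by the theory quoted above it continues meromorphically, its only pole with $\Re s>1-\delta$ is at $s=1$ of order at most $3$, and the assumed zero at $\beta+it_{0}$ forces $L(s,\Pi\times\widetilde{\Pi})$ to vanish at the \emph{real} point $s=\beta$ to order at least $4$ --- this holds whenever $\pi\otimes|\det|^{it_{0}}$ is not self-dual, in particular whenever $\pi$ is not self-dual, or $t_{0}\ne0$. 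Evaluating $-\tfrac{d}{ds}\log L(s,\Pi\times\widetilde{\Pi})$ at a real point $\sigma_{0}=1+\kappa/\log\mathfrak{q}$ and balancing the pole against the zero by non-negativity then yields $\beta\le 1-c/\log(q(|t_{0}|+|r|+2))$, with all constants effective.

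The one case escaping this argument --- and the main obstacle --- is a \emph{real} zero $\beta$ of $L(s,\pi)$ (that is, $t_{0}=0$) when $\pi$ is self-dual: then the zero induced at $s=\beta$ has exactly the same order as the pole of $L(s,\Pi\times\widetilde{\Pi})$ at $s=1$, the positivity inequality degenerates, and no conclusion follows, precisely as for real Dirichlet characters on ${\rm GL}(1)$, where this is the source of the Siegel zero. Excluding such a zero is the real content of Hoffstein--Ramakrishnan: a real zero of $L(s,\pi)$ within $c/\log\mathfrak{q}$ of $1$ would force the residue at $s=1$ of $L(s,\pi\times\widetilde{\pi})$, that is $L(1,{\rm sym}^{2}\pi)$ up to harmless factors, to be abnormally small, contradicting an \emph{effective} lower bound $L(1,{\rm sym}^{2}\pi)\gg_{A}(\log\mathfrak{q})^{-A}$ of Hoffstein--Lockhart type; that lower bound itself rests on the non-negativity of the Dirichlet coefficients of $L(s,{\rm sym}^{2}\pi)$ and on the absence of a Siegel zero for this ${\rm GL}(3)$ $L$-function, obtained from the pole of $L(s,{\rm sym}^{2}\pi\times{\rm sym}^{2}\pi)$. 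The only primitive forms evading this mechanism are the dihedral ones, induced from a Hecke character of a quadratic field, for which $L(s,\pi)$ literally factors into Hecke $L$-functions and can inherit at worst the classical ${\rm GL}(1)$ exceptional zero; this does not arise for level-one forms and so is immaterial for the rest of the paper. I expect this exceptional-zero step, together with the careful tracking of the analytic conductor in $q$, $r$ and $t$ --- using \eqref{7/64} to keep the archimedean parameters under control when $\pi$ is a genuine Maass form --- to concentrate essentially all the difficulty; once the ${\rm GL}(3)$ automorphy and the Hoffstein--Lockhart lower bound are granted, the de la Vall\'ee Poussin part is routine.
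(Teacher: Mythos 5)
You are not competing with an internal argument here: the paper does not prove Theorem~\ref{zf} at all. The lettered theorems in this paper are imported results, and this one is quoted verbatim from Hoffstein--Ramakrishnan \cite[Theorem C, part (3)]{HR}; the paper's ``proof'' is that citation. Measured against that, your sketch is a reasonable reconstruction of the literature. The de la Vall\'ee Poussin part is standard and correct: with $\Pi=\mathbf 1\boxplus\pi\otimes|\det|^{it_0}\boxplus\widetilde\pi\otimes|\det|^{-it_0}$, the nonnegativity of the coefficients of $-\tfrac{d}{ds}\log L(s,\Pi\times\widetilde\Pi)$, a pole of order $3$ at $s=1$ and a forced zero of order $\ge 4$ at the real point $\beta$ give the region $1-c/\log\bigl(q(|t_0|+|r|+2)\bigr)$, provided $\pi\not\simeq\widetilde\pi\otimes|\det|^{-2it_0}$ (your phrasing ``$\pi$ not self-dual or $t_0\neq 0$'' is slightly off in general, but coincides with this condition for finite-order nebentypus, and the escaping case is anyway a real zero of a self-dual form after a unitary twist). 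Your conductor bookkeeping $\asymp q(|t|+|r|+2)^2$ matches the logarithm in the statement, and your caveat about dihedral forms is apt: as printed, ``any primitive form'' with an absolute effective constant must implicitly exclude forms induced from quadratic Hecke characters of quadratic fields, exactly as the paper's own introduction acknowledges; this is immaterial for the application, since twists $f\otimes\chi$ of level-one forms are never dihedral.

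The genuine gap is in the one place you yourself flag as the whole difficulty. For a real zero of a self-dual $\pi$ you do not give an argument; you cite Hoffstein--Ramakrishnan, which is precisely the theorem being quoted, so your proposal adds no independent rigor beyond what the paper already does. Moreover, the mechanism you describe would not recover the stated strength even if executed: a chain of the form ``real zero $\beta$ close to $1$ forces $L(1,\mathrm{sym}^2\pi)\ll (1-\beta)(\log\mathfrak q)^{K}$, contradicting $L(1,\mathrm{sym}^2\pi)\gg (\log\mathfrak q)^{-A}$'' only yields a zero-free interval of length $\gg(\log\mathfrak q)^{-A-K}$, i.e.\ a region $1-c/(\log\mathfrak q)^{B}$ with $B>1$, not $1-c/\log\mathfrak q$. (Note also that a zero of $L(s,\pi)$ does not sit inside $\zeta(s)L(s,\mathrm{sym}^2\pi)$, so even the ``residue abnormally small'' step needs an auxiliary positive Dirichlet series containing $L(s,\pi)$ as a factor.) The full strength asserted in Theorem~\ref{zf} for real zeros is obtained in \cite{HR} by a positivity argument with a strict pole/zero order imbalance, built from the Gelbart--Jacquet lift \cite{GJ} and ${\rm GL}(2)\times{\rm GL}(3)$ Rankin--Selberg convolutions, not from a lower bound on $L(1,\mathrm{sym}^2\pi)$ alone. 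So either carry out that argument or state plainly, as the paper does, that the exceptional-zero elimination is being quoted from \cite{HR}.
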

Now  \cite[Thm. 5.13]{IK}, more specifically
formula (5.52), leads to the following taking into account the absence of the exceptional zero.
\begin{theorema} Let $f$ be a primitive Maass cusp form of some level $q$, spectral parameter $r$, and weight $k$. For any $X \geq 2$, we have
\begin{equation}\label{pnt}
\sum_{\substack{p\leq X }}\lambda_f(p)\log p \ll \sqrt{q(|r|+3)}\,X\,\exp\bigl(-\frac{c}{2}\sqrt{\log X}\,\bigr),
\end{equation}
where the implied constant is absolute and $c$ is as in the previous theorem.
\end{theorema}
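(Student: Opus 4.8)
The plan is to deduce \eqref{pnt} from the classical de la Vallée Poussin argument, in the form packaged in \cite[Thm.~5.13]{IK} --- and more precisely in the explicit estimate (5.52) used in its proof --- with the zero--free region of Theorem~\ref{zf} as the sole arithmetic input. First I would pass from primes to prime powers. Writing $-\frac{L'}{L}(s,f)=\sum_{n\ge1}\Lambda_f(n)n^{-s}$ for $\Re s>1$, one has $\Lambda_f(p)=\lambda_f(p)\log p$ and, more generally, $\Lambda_f(p^\nu)=(\alpha_f(p)^\nu+\beta_f(p)^\nu)\log p$, where $\alpha_f(p),\beta_f(p)$ are the Satake parameters of $f$ at $p$. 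By the Kim--Sarnak bound \eqref{7/64} the Satake parameters satisfy $|\alpha_f(p)|,|\beta_f(p)|\le p^{7/64}$, so the terms with $\nu\ge2$ contribute $\ll X^{1/2+7/64}(\log X)^2\ll X^{3/4}$ to $\Psi_f(X):=\sum_{n\le X}\Lambda_f(n)$, which is comfortably absorbed by the right--hand side of \eqref{pnt}. Hence it is enough to bound $\Psi_f(X)$ itself.

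Next I would record the analytic facts about $L(s,f)=\sum_{n\ge1}\lambda_f(n)n^{-s}$ that the machine consumes: it has an Euler product of degree $2$; it extends to an \emph{entire} function because $f$ is cuspidal (in the eventual application of \eqref{pnt} the relevant $L$--function is $L(s,f\otimes\chi)$ for a primitive Dirichlet character $\chi$, which is still entire since the twist of a primitive cusp form by a primitive character is again cuspidal); it satisfies a functional equation $s\leftrightarrow1-s$ whose analytic conductor has size $\asymp q(|r|+2)^2$; and it obeys the usual polynomial convexity bounds in vertical strips. The decisive point --- the only genuinely non--trivial ingredient --- is that, by Theorem~\ref{zf}, there is an absolute effective $c>0$ with $L(s,f)\ne0$ in the region $\sigma\ge1-c/\log(q(|t|+|r|+2))$; since that region contains a real segment immediately to the left of $s=1$, it in particular excludes an exceptional (Siegel) zero. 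This is precisely where the ${\rm GL}(2)$ situation improves on the ${\rm GL}(1)$ one: for Dirichlet $L$--functions the Siegel zero is not ruled out, which is why Davenport's bound \eqref{Dav} carries only a negative power of $\log X$ in place of $\exp(-c\sqrt{\log X})$.

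With these inputs in hand the remainder is routine. For $X$ small compared with the conductor, \eqref{pnt} follows trivially from $|\lambda_f(p)|\le 2p^{7/64}$, so assume $X$ large. Apply the truncated Perron formula to $-\frac{L'}{L}(s,f)\frac{X^s}{s}$ on the line $\Re s=1+1/\log X$ with truncation height $T=\exp(\sqrt{\log X})$, and move the contour leftwards to the boundary $\sigma=1-c/\log(q(|t|+|r|+2))$ of the zero--free region. Since $L(s,f)$ has no pole and $L(1,f)\ne0$, there is no residue at $s=1$, in agreement with the asserted $o(X)$ bound; the shifted vertical segment and the two horizontal segments are estimated using the standard bound $|L'/L(s,f)|\ll\log^2(q(|t|+|r|+2))$ throughout the zero--free region, which itself comes from the Hadamard factorization of $L(s,f)$ together with the zero--count $N(T+1)-N(T)\ll\log(q(T+|r|+2))$. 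Collecting the pieces gives the saving $\exp(-\frac c2\sqrt{\log X})$ with the same constant $c$ as in Theorem~\ref{zf}, and the dependence on $q$ and $r$ that survives comes out as the polynomial factor $\sqrt{q(|r|+3)}$; all constants stay effective. I do not expect the contour integral to be the obstacle: the hard content is exactly the effective zero--free region with no Siegel zero, and that is delivered ready--made by Theorem~\ref{zf}. What remains is the uniform bookkeeping needed to check that the conductor dependence emerges as the stated $\sqrt{q(|r|+3)}$ --- via estimate (5.52) --- rather than as some larger power.
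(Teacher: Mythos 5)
Your proposal is correct and takes essentially the same route as the paper: the paper obtains this statement simply by citing \cite[Thm.~5.13, eqn.~(5.52)]{IK} together with the zero-free region of Theorem~\ref{zf} (which in particular rules out an exceptional zero), and your Perron-formula-plus-contour-shift argument is precisely an unpacking of that standard machinery with the same single nontrivial input. The one loose end you flag---whether the bookkeeping yields exactly $\sqrt{q(|r|+3)}$ rather than a slightly larger power of $(|r|+3)$---is immaterial here, since in the paper's application the form (hence $r$) is fixed and only the conductor of the twist varies.
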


If $f$ is a Hecke-Maass cusp form on ${\rm SL}(2, \mathbb{Z})$ and $\chi (\textnormal{mod } q)$ is a primitive 
Dirichlet character, then
 applying the above theorem to the twisted form $f\otimes \chi$ we get the estimate
 \begin{equation}\label{Riemann}
 \sum_{\substack{p\leq X }}\lambda_f(p)\chi(p) \log p \ll q\sqrt{(|r|+3)}\, X\,\exp\bigl(-\frac{c}{2}\sqrt{\log X}\,\bigr),
 \end{equation}
where the implied constant is absolute.
Apparently, it is not possible 
to deduce from \eqref{Riemann} a similar bound for the sum 
$
\sum_{1 \leq n\leq X}\lambda_f(n)\mu(n)\chi(n)
$
by the combinatorial device presented in the proof of  \cite[Corollary 5.29]{IK}. So we shall prove from scratch
the following
theorem. 
\begin{theorem}\label{pnt2}
Let  $f$ be any Hecke-Maass cusp form for the full modular group and let $\chi (\textnormal{mod }q)$ be any Dirichlet character. Let $X \geq 2$. Then we have,
\begin{equation}\label{harpoon}
\sum_{\substack{p\leq X }}\lambda_f(p)\chi(p) \log p \ll_{f} \sqrt{q} X\exp(-c_1\sqrt{\log X})
\end{equation}
and
\begin{equation}\label{thermo}
\sum_{n\leq X}\lambda_f(n)\mu(n)\chi(n) \ll_{f} \sqrt{q} X\exp(-c_1\sqrt{\log X}),
\end{equation}
where the implied constant depends only on the form $f$ and $c_1 = \frac{\sqrt{c}}{10}$, where $c$ is the same
absolute constant that appears in Theorem \ref{zf}.
\end{theorem}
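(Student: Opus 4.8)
\textbf{Proof proposal for Theorem \ref{pnt2}.}

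The plan is to prove the two estimates \eqref{harpoon} and \eqref{thermo} together, deducing the second from the first by a Vaughan-type combinatorial identity, exactly as in the classical transition from PNT to a M\"obius sum. First I would reduce to the case of a primitive character. Writing $\chi$ modulo $q$ as induced by a primitive $\chi^* $ modulo $q^* \mid q$, the difference between the sum over $\lambda_f(n)\chi(n)$ and the sum over $\lambda_f(n)\chi^*(n)$ is supported on $n$ sharing a prime factor with $q$, and using the trivial bound $|\lambda_f(n)| \ll_f n^{\eps}$ (or \eqref{7/64}) together with $\sum_{p\mid q} \log p \ll \log q$ one checks this contributes an acceptable error; so we may assume $\chi$ primitive. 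Then \eqref{harpoon} is precisely \eqref{Riemann} with $c_1 = \frac{c}{2}$, which is stronger than the claimed $c_1 = \frac{\sqrt c}{10}$, so the prime-sum half is essentially already in hand. (One must absorb the factor $\sqrt{|r|+3}$ into the $f$-dependent implied constant, which is legitimate since $f$ is fixed.)

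The real work is \eqref{thermo}. Here I would apply Vaughan's identity for $\mu$: for a parameter $U$ (to be chosen as a small power of $X$, say $U = \exp(\sqrt{\log X})$ or $X^{\delta}$), write $\mu(n)$ on $n > U$ as a sum of convolutions $\mu \ast \mathbf{1}$-type pieces, producing a ``type I'' sum $\sum_{d \le U} \mu(d) \sum_{m \le X/d} \lambda_f(dm)\chi(dm)$ and a ``type II'' (bilinear) sum $\sum_{U < d \le X/U}\sum_{U < m} (\cdots)\lambda_f(dm)\chi(dm)$, plus a negligible short initial segment $n \le U$ bounded trivially by $\sum_{n\le U}|\lambda_f(n)| \ll_f U$ via \eqref{L1}. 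Because $\lambda_f$ is \emph{not} completely multiplicative, $\lambda_f(dm) \ne \lambda_f(d)\lambda_f(m)$ in general; the Hecke relation only gives $\lambda_f(dm) = \sum_{e \mid (d,m)} \mu(e) \lambda_f(d/e)\lambda_f(m/e)$. So at each occurrence I would peel off the gcd: substitute this Hecke expansion, pull the sum over $e \mid (d,m)$ to the outside, set $d = e d'$, $m = e m'$, and reduce to sums with a clean multiplicative splitting $\lambda_f(d')\lambda_f(m')$ at the cost of an extra divisor-like sum over $e$, whose total length is controlled by $\sum_e 1/e^{1-\eps}$ being small or by $e$ being confined to a short range. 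For the type I sums, after this reduction the inner sum over $m'$ is $\sum_{m' \le Y} \lambda_f(em')\chi(em')$, which one can either bound using the smooth-weight version of \eqref{sum} (the Wilton/Hecke bound, giving $\ll_f Y^{1/2+\eps}$ uniformly in the character and in $e$ up to $e^{\eps}$) or, more robustly here, via the PNT input \eqref{Riemann} applied after removing $\log$ by partial summation — summing over $d' \le U$ then gives the claimed savings. For the type II sums I would open $\chi$ and apply the large sieve / duality together with the mean-square bound \eqref{L2} for $\lambda_f$ on both variables (Cauchy--Schwarz in the bilinear form), which yields square-root-type cancellation in the ranges $U < d', m' < X/U$; combined with the range restriction this beats $X(\log X)^{-A}$ comfortably, and in fact gives the $\exp(-c_1\sqrt{\log X})$ strength once $U$ is chosen as $\exp(\kappa \sqrt{\log X})$ and the type I error from \eqref{Riemann} (which carries $\exp(-\frac{c}{2}\sqrt{\log(X/U)})$) is balanced against the type II error (which is roughly $X/U^{1/2+o(1)}$). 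Choosing $\kappa$ so that these match produces the constant $c_1 = \frac{\sqrt c}{10}$.

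The main obstacle I anticipate is the bookkeeping of the gcd-removal across the Vaughan decomposition: one has to verify that the extra sum over $e \mid (d,m)$ does not destroy the cancellation — in particular that in the type II sum the variable $e$ stays short enough (a power of $U$ at most) and that the shifted coefficients $\lambda_f(em')$, $\lambda_f(ed')$ still obey a Rankin--Selberg mean-square bound uniform in $e$, which follows from \eqref{L2} applied to the form $f$ together with the bound $\sum_{m'\le Y}|\lambda_f(em')|^2 \ll_f d(e)^{O(1)} Y$ (itself a consequence of the Hecke multiplicativity and \eqref{L2}). The holomorphic case is much easier, as the paper notes, since Deligne's bound $|\lambda_f(n)| \ll n^{\eps}$ makes every error term above trivially small; but to get the Maass case one genuinely needs the mean-square input \eqref{L2} and the nonvanishing region of Theorem \ref{zf} applied to the twist $f \otimes \chi$, whose conductor is $q^2$ — which is why the final bound carries $\sqrt q$ rather than a higher power of $q$ (the $q^2$ conductor contributes $\log q^2 = 2\log q$ inside the zero-free region, and $\sqrt{q^2}=q$ would appear were it not that the $q$-dependence is tracked more carefully through the explicit formula, giving $\sqrt q$ after optimizing; in any case since we only claim dependence ``$\ll_f \sqrt q$'' and $f$ is fixed, the precise power of $q$ up to the stated one is what must be checked in the explicit-formula step).
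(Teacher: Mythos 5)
Your plan to deduce \eqref{thermo} from \eqref{harpoon} by Vaughan's identity breaks down at the type II estimate, and this is not a bookkeeping issue but a structural one. The theorem must hold for \emph{every} Dirichlet character, including the trivial character mod $1$, and it is exactly in this small-$q$ regime that the result is later used (on the major arcs). For $q$ small there is no oscillating factor left in the bilinear sum $\sum_{d}\sum_{m} a_d b_m \lambda_f(dm)\chi(dm)$: the character is essentially constant, and the Cauchy--Schwarz step you propose, followed by the mean-square input \eqref{L2}, already loses the game on the diagonal --- expanding $\sum_d |\sum_m b_m \lambda_f(dm)|^2$, the terms $m_1=m_2$ alone contribute on the order of $DM\asymp X$ times divisor-type factors, so after taking square roots you recover only the trivial bound $X^{1+o(1)}$, with no $\exp(-c\sqrt{\log X})$ saving and not even a power of $\log$. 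The "large sieve / duality" remark does not help either, since you have a single fixed character of possibly tiny modulus, not an average over characters. In the paper's main theorem the Vaughan/type II machinery does succeed, but only because there the twist is $e(\alpha n)$ with $\alpha$ on the \emph{minor} arcs, so the long smooth variable produces genuine cancellation of additive characters (via Lemma \ref{IwKowp.346}) and Miller's theorem supplies the rest; none of that input is available here. The paper in fact flags precisely your intended shortcut: it states that one apparently cannot deduce the M\"obius bound from \eqref{Riemann} by the standard combinatorial device, which is why Theorem \ref{pnt2} is proved "from scratch."

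What the paper actually does for \eqref{thermo} is classical multiplicative analysis: it forms the Dirichlet series $M(s,f\otimes\chi)$ of \eqref{defM0}, relates it to $L(s,f\otimes\chi)^{-1}$ up to harmless Euler factors via \eqref{central}, bounds $L'/L$ and then $L^{\pm 1}$ inside the Hoffstein--Ramakrishnan zero-free region of Theorem \ref{zf} using Borel--Carath\'eodory (Lemmas \ref{Logder} and \ref{boundL-1}), proves a uniform lower bound for the local factor at $p=2$ (Lemma \ref{uniflowLp}), and then applies a smoothed Perron formula, moving the contour to the boundary of the zero-free region; the factor $\sqrt q$ in the statement is not extracted from an explicit formula but is simply what makes the bound trivial (via \eqref{L1}) once $q>\exp(2c_1\sqrt{\log X})$. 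A secondary inaccuracy in your write-up: for primitive $\chi$ the available bound \eqref{Riemann} carries a factor $q$, not $\sqrt q$, and whether $q\,\exp(-\tfrac c2\sqrt{\log X})$ can be absorbed into $\sqrt q\,\exp(-c_1\sqrt{\log X})$ depends on the size of the unspecified constant $c$, so even \eqref{harpoon} is not "already in hand" as stated; the paper instead obtains it by the same Perron argument applied to $-L'/L$. To repair your approach you would need an analytic input equivalent to bounding $1/L(s,f\otimes\chi)$ in the zero-free region, at which point you are back to the paper's method.
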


\subsection{Idea of the proof}
 
We prove the second bound \eqref{thermo} only as this is the harder one and we follow the classical method using the Perron formula and
Dirichlet series. To prove it, we need to give a good bound for the associated Dirichlet series $M(s, f \otimes \chi)$ (see 
\eqref{defM0}) in the zero-free region. This is the content of Lemma \ref{boundM}. To obtain this bound, we first relate it to 
the reciprocal of the $L$-function $L(s, f \otimes \chi)$ (see \eqref{central}). Now a suitable bound for the reciprocal of 
the $L$--function follows from a similar bound for the logarithmic derivative of the $L$-function and this is done in the proof of 
Lemma \ref{boundL-1}. Thus we are reduced to bounding the logarithmic derivative of the $L$-function which is done in the
proof of Lemma \ref{Logder} using standard techniques from complex analysis. The proof of Lemma \ref{Logder} also requires 
a uniform lower bound of the Euler factors and this is the content of Lemma \ref{uniflowLp}.
It is clear that the proof of \eqref{harpoon} will be similar and the only difference will be that instead of $M(s, f \otimes \chi)$, we shall have to work with the 
 logarithmic derivative of $L(s, f \otimes \chi)$, the required bound of which is established in Lemma \ref{Logder}.
 We prove the lemmas mentioned above in the next subsection. First 
 we introduce two notations valid for this section only. 
We shall write
$\Omega$ to denote the region in the complex plane given by
$$
\Omega =\left\{ \sigma+ i t\,: \sigma \geq 1 - \frac{c}{6\,\LL}\right\},
$$ 
and $\LL$ to denote 
\begin{equation}\label{defL}
\LL:=\log \bigl(q(\vert t\vert +|r| +2)\bigr).
\end{equation}
 
\subsection{Preparatory lemmas} 
First we start by estimating the logarithmic derivative of the $L$-function.
\begin{lemma}\label{Logder}
Let $f$ and $\chi$ be as in Theorem \ref{pnt2}. Let $c$  be the constant appearing in Theorem \ref{zf}. 
Then, for every $s\in \Omega$, we have
\begin{equation}\label{logder}
\frac{L\rq{}(s, f\otimes \chi)}{L(s, f\otimes \chi)} \ll_{f}\LL,
\end{equation}
where the implied constant depends only on the form $f$.
\end{lemma}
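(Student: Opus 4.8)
The plan is to derive the bound on the logarithmic derivative of $L(s,f\otimes\chi)$ from the classical Hadamard-factorization/Borel--Carath\'eodory machinery, using as the only input the zero-free region provided by Theorem \ref{zf}. Concretely, I would fix $s=\sigma+it\in\Omega$, so that $\sigma\geq 1-\frac{c}{6\LL}$, and compare with the point $s_0=2+it$, which lies well inside the region of absolute convergence of $L(s,f\otimes\chi)$ and its Euler product. At $s_0$ one has $\log L(s_0,f\otimes\chi)\ll_f 1$ (the Dirichlet series of $\log L$ has coefficients supported on prime powers with bounded-in-$f$ size after using the Kim--Sarnak bound \eqref{7/64} on the local parameters), and likewise $L'/L(s_0,f\otimes\chi)\ll_f 1$.

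\medskip
\textbf{Key steps.} First I would record the analytic continuation and functional equation of $L(s,f\otimes\chi)$ together with the standard convexity/growth bound: on the line $\sigma=-1$ (or any fixed line to the left of the critical strip) the functional equation and Stirling give $|L(s,f\otimes\chi)|\ll (q(|t|+|r|+2))^{O(1)}$, hence by Phragm\'en--Lindel\"of $\log|L(s,f\otimes\chi)|\ll \LL$ throughout a fixed-width vertical strip containing $\Omega$ and $s_0$. Second, I would apply the Borel--Carath\'eodory theorem on two concentric discs centered at $s_0$ of radii comparable to $1$ (say $r_1$ slightly larger than the distance from $s_0$ to $s$, and $r_2$ a bit larger still, both $O(1)$): this converts the one-sided bound $\Re\log L\ll\LL$ on the larger circle into the two-sided bound $|\log L(z,f\otimes\chi)|\ll\LL$ on the smaller circle, and then Cauchy's estimate for the derivative of the \emph{holomorphic} function $\log L$ gives $L'/L(z,f\otimes\chi)\ll\LL$ \emph{provided} $\log L$ is holomorphic on the relevant disc. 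This is exactly where Theorem \ref{zf} enters: it guarantees $L(s,f\otimes\chi)\neq 0$ in $\Omega$, but the disc around $s_0$ of radius $r_2=O(1)$ will stick out of $\Omega$, so one cannot apply Borel--Carath\'eodory to $\log L$ directly on the whole disc.

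\medskip
\textbf{The main obstacle, and how I would handle it.} The genuine issue is the zeros of $L(s,f\otimes\chi)$ just outside $\Omega$: to the left of the zero-free region there can be $\ll\LL$ zeros in a disc of bounded radius about $s_0$ (this zero-count itself follows from Jensen's formula together with the Phragm\'en--Lindel\"of growth bound). The standard fix, which I would carry out, is to write
\begin{equation*}
\frac{L'}{L}(s,f\otimes\chi)=\sum_{|\rho-s_0|\le r_1}\frac{1}{s-\rho}+O_f(\LL),
\end{equation*}
where the sum is over zeros $\rho$ in the disc, the error term coming from applying Borel--Carath\'eodory and Cauchy's estimate to the \emph{modified} function $g(z)=\log\bigl(L(z,f\otimes\chi)\prod_{|\rho-s_0|\le r_1}(z-\rho)^{-1}\bigr)$, which is holomorphic on the disc. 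For $s\in\Omega$, each term $\frac{1}{s-\rho}$ with $\Re\rho\le 1-\frac{c}{6\LL}$ satisfies $\Re\frac{1}{s-\rho}>0$ and, crucially, $|s-\rho|\gg 1/\LL$ is \emph{not} automatic; however, taking real parts and using that $\Re\frac{1}{s-\rho}\le\Re\frac{1}{s_0-\rho}\cdot(\text{bounded factor})$ — or more simply bounding the whole sum by $\Re\frac{L'}{L}(s_0,f\otimes\chi)+O_f(\LL)=O_f(\LL)$ after comparing $\sum\Re\frac{1}{s-\rho}$ with $\sum\Re\frac{1}{s_0-\rho}$ term by term (each ratio being $O(1)$ uniformly in $t$) — yields the desired $\ll_f\LL$. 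Thus the only nontrivial analytic ingredients are Theorem \ref{zf}, the functional equation with Stirling's formula, and the elementary Borel--Carath\'eodory lemma; the rest is the routine ``three-circles'' bookkeeping and I would not grind through the constants, noting only that everything is effective and that the dependence on $f$ (through $r$ and the local parameters) is polynomial and hence absorbed into $\LL$ and the implied constant.
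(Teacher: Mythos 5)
Your overall skeleton (Borel--Carath\'eodory zero-sum formula, zero-free region from Theorem \ref{zf}, positivity of $\Re\frac{1}{s'-\rho}$) is the same as the paper's, but the decisive step is wrong as you state it: you take the reference point $s_0=2+it$, at distance $\asymp 1$ from $s$, and claim that the term-by-term ratio $\Re\frac{1}{s-\rho}\big/\Re\frac{1}{s_0-\rho}$ is $O(1)$. It is not. For a zero $\rho=\beta+i\gamma$ with $\gamma$ close to $t$ and $\beta$ just left of the zero-free boundary (say $\beta=1-c/\LL$), one has $\Re\frac{1}{s-\rho}\asymp\frac{1}{\sigma-\beta}\asymp\LL$ while $\Re\frac{1}{s_0-\rho}\asymp 1$, so the ratio can be of size $\LL$. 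Likewise, bounding $\bigl|\frac{1}{s-\rho}-\frac{1}{s_0-\rho}\bigr|$ costs $\frac{|s-s_0|}{|s-\rho|\,|s_0-\rho|}\ll\frac{1}{|s-\rho|}$, which summed over the possible $\ll\LL$ zeros clustered near $s$ (nothing in Theorem \ref{zf} prevents this) gives only $\ll\LL^2$; and a bound on $\sum_\rho\Re\frac{1}{s-\rho}$ does not by itself control the imaginary parts of the zero sum. So your argument, as written, yields at best $L'/L\ll\LL^2$, which is not the statement of the lemma (and would be too weak downstream, where Lemma \ref{boundL-1} integrates this bound over an interval of length $\asymp 1/\LL$).

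The repair is exactly what the paper does: compare with the \emph{nearby} point $s_1=1+\frac{c}{10\LL}+it$, so that $|s-s_1|\ll 1/\LL$. Then $|s-\rho|\asymp|s_1-\rho|$ (both real parts are $\gg 1/\LL$ away from $\beta$, by the zero-free region), hence
\begin{equation*}
\Bigl|\frac{1}{s-\rho}-\frac{1}{s_1-\rho}\Bigr|\ll\frac{|s-s_1|}{|s_1-\rho|^2}\ll\frac{1/\LL}{|s_1-\rho|^2}\ll\Re\frac{1}{s_1-\rho},
\end{equation*}
and the sum of these real parts is $\ll\LL$ by the zero-sum formula at $s_1$ together with $\frac{L'}{L}(s_1)\ll_f\frac{1}{\sigma_1-1}\ll\LL$. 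Note that this last bound in the region of conditional convergence is itself not free for Maass forms: since Ramanujan is unknown, the paper derives it from the average bound $\sum_{p\le X}|\lambda_f(p)|\log p\ll_f X$ (formula \eqref{L21}, via Rankin--Selberg), plus Kim--Sarnak for the prime-power terms; your proposal only invokes such input at $\sigma=2$, where it is easy, and does not supply the needed estimate at $\sigma=1+O(1/\LL)$. Finally, you treat $\chi$ as if the full analytic machinery (functional equation, Phragm\'en--Lindel\"of growth) applied directly to $L(s,f\otimes\chi)$; this is only valid for primitive $\chi$. The lemma is stated for arbitrary $\chi\ (\mathrm{mod}\ q)$, and the paper handles the imprimitive case separately by writing $L(s,f\otimes\chi)=L(s,f\otimes\chi^*)\prod_{p\mid q,\,p\nmid q^*}L_p(s,f\otimes\chi^*)$ and using the uniform lower bound on the local factors (Lemma \ref{uniflowLp}); your proposal omits this case entirely.
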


To prove this lemma, we first recall a consequence of the Borel-Carath\'eodory theorem (see   \cite[\S 3.9, Lemma $\alpha$]{Tit}).
\begin{lemma}\label{BC} Let $s_0\in \C$, $r>0$ and $U$ an open set containing the disk $\{s\,;\ \vert s -s_0\vert \leq r\}$. Let $M\geq 1$ and $h$ an holomorphic function on $U$, satisfying $h(s_0)\not= 0$  and the inequality
$$
\left|\frac{h(s)}{h(s_0)} \right| \leq {\rm e}^M,
$$
in the disk  $|s-s_0| \leq r$. Then, for every $s$ satisfying the inequality $\vert s- s_0\vert \leq \frac{r}{4},$  one has the inequality
$$
\Bigl\vert\,
\frac{h'(s)}{h(s)} -\sum_{\substack{\rho:h(\rho)=0\\ |s_0-\rho| \leq\frac{r}{2}}} \frac{1}{s-\rho}\, \Bigr\vert  \leq 48\,\frac{M}{r}.
$$
\end{lemma}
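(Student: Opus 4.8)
The plan is to reduce the statement to the classical Borel--Carath\'eodory machinery by stripping off the zeros of $h$ with Blaschke factors, so that the modified function is zero-free and keeps the same modulus as $h$ on a fixed circle. After a translation I may assume $s_0=0$, and I set $R=r/2$. Let $\rho_1,\dots,\rho_n$ be the zeros of $h$ in the closed disk $|s|\le R$, repeated with multiplicity; there are finitely many since $h$ is holomorphic near $\overline{D(0,R)}$ and $h(0)\ne0$. I would then introduce
\[
g(s)=h(s)\prod_{j=1}^{n}\frac{R^2-\overline{\rho_j}\,s}{R\,(s-\rho_j)}.
\]
Each Blaschke factor has modulus $1$ on $|s|=R$, and its only zero, at $s=R^2/\overline{\rho_j}$, lies in $|s|\ge R$; hence $g$ is holomorphic and zero-free on $\overline{D(0,R)}$, satisfies $|g(s)|=|h(s)|$ on $|s|=R$, and at the centre gives $|g(0)|=|h(0)|\prod_j (R/|\rho_j|)\ge |h(0)|$.

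Differentiating $\log g$ term by term yields the exact identity
\[
\frac{h'(s)}{h(s)}-\sum_{j=1}^{n}\frac{1}{s-\rho_j}=\frac{g'(s)}{g(s)}+\sum_{j=1}^{n}\frac{\overline{\rho_j}}{R^2-\overline{\rho_j}\,s},
\]
so it suffices to bound the two terms on the right for $|s|\le R/2=r/4$. For the second sum I would first control the number of zeros: Jensen's formula on the circle $|s|=r$ (where the hypothesis $|h(s)/h(0)|\le e^M$ applies) gives $n\log 2\le\sum_{|\rho_j|\le r}\log(r/|\rho_j|)\le M$, whence $n\le M/\log 2$. Since $|R^2-\overline{\rho_j}s|\ge R^2-R\cdot(R/2)=R^2/2$ and $|\rho_j|\le R$ on the region in question, each term is at most $2/R$, so this sum is at most $2n/R\le (4/\log 2)(M/r)$.

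For $g'/g$ I would invoke the Borel--Carath\'eodory theorem through the Carath\'eodory coefficient estimate. The function $F(s)=\log\big(g(s)/g(0)\big)$ is holomorphic on $\overline{D(0,R)}$ with $F(0)=0$, and on $|s|=R$ one has $\Re F(s)=\log(|g(s)|/|g(0)|)=\log(|h(s)|/|g(0)|)\le\log(e^M|h(0)|/|h(0)|)=M$; by the maximum principle $\Re F\le M$ throughout $|s|\le R$. Carath\'eodory's inequality then bounds the Taylor coefficients of $F$ by $|a_k|\le 2M/R^k$ for $k\ge1$, and summing $F'(s)=\sum_{k\ge1}k a_k s^{k-1}$ at radius $R'=R/2$ gives $|g'(s)/g(s)|=|F'(s)|\le 2MR/(R-R')^2=8M/R=16M/r$. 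Combining the two estimates yields a bound of the form $(16+4/\log 2)M/r<48\,M/r$, as required.

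I expect the main obstacle to be precisely the boundary behaviour handled by the Blaschke product: the naive choice $g=h/\prod_j(s-\rho_j)$ does not keep $\log|g|$ bounded above on $|s|=R$ (it blows up wherever $s$ approaches a zero lying near the circle), so it is the modulus-one boundary property of the Blaschke factors, together with the lower bound $|g(0)|\ge|h(0)|$, that makes the Carath\'eodory step applicable. The remaining work is the routine bookkeeping of constants needed to land below $48$.
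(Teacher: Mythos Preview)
The paper does not give a proof of this lemma at all: it is quoted verbatim as a consequence of the Borel--Carath\'eodory theorem, with a citation to Titchmarsh [\S 3.9, Lemma~$\alpha$], and then used as a black box in the proof of Lemma~\ref{Logder}. Your argument is correct and is essentially the classical proof found in that reference: strip off the zeros in the disk of radius $r/2$ by Blaschke factors so that the modified function $g$ is zero-free with $|g|=|h|$ on the circle and $|g(0)|\ge|h(0)|$, bound the number of zeros via Jensen's formula on the larger disk $|s|\le r$, and then control $g'/g$ through Carath\'eodory's coefficient inequality applied to $\log(g/g(0))$. The bookkeeping of constants you carry out, giving $(16+4/\log 2)\,M/r<48\,M/r$, is accurate. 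One minor technical point you may want to mention explicitly is the case where some zero $\rho_j$ lies exactly on $|s|=r/2$ (the Blaschke factor then degenerates to a unimodular constant and no longer cancels the zero); this is handled in the standard way by a small perturbation of the radius or a limiting argument, and does not affect the final inequality.
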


Now we prove Lemma \ref{Logder}. 
\begin{proof} We consider two cases separately: $\chi$ is primitive and otherwise.

$\bullet$ \textit{$\chi$ is a  primitive character.} We first suppose that $\chi$ is a primitive character modulo $q$. Then we know that 
$f\otimes \chi$ is a primitive Maass cusp form of level $q^2$. The $L$--function attached to $f$ is
$
L(s,f)=\sum_{n}\frac{\lambda_{f }(n)}{n^s}=\prod_{p} \Bigl(1- \lambda_{f}(p)p^{-s} +p^{-2s} \Bigr)^{-1},
$
and the $L$--function attached to the twisted form $f\otimes \chi$ is
\begin{equation}\label{prodeul}
L(s,f\otimes \chi)=\sum_{n}\frac{\lambda_{f}(n)\, \chi(n)}{n^s}=\prod_{p} L_{p}(s, f\otimes \chi)^{-1}
 \end{equation}
 where the local factor is
 \begin{equation}\label{defLp}
 L_{p}(s, f\otimes \chi)= \Bigl(1- \lambda_{f}(p)\, \chi (p)\, p^{-s} +\chi^2 (p)\,p^{-2s} \Bigr).
 \end{equation}
By \eqref{L1}, the infinite series and the Euler product appearing in \eqref{prodeul} 
are absolutely convergent for $\sigma >1$. We know from the theory of automorphic $L$--functions that 
the function $L(s, f\otimes \chi)$ has an analytic continuation to the whole complex plane and satisfies a functional 
 equation relating the values at $s$ and $1-s$ and has a polynomial growth 
 in the critical strip; i.e., for some  absolute constant ~$A$, one has the bound
 \begin{equation}\label{polgrow}
 L(s, f\otimes \chi) \leq {\rm e}^{A\,\LL},
 \end{equation}
 uniformly for $\sigma \geq 1/2$ (see \cite[eqn. (5.20)]{IK}).
Taking the logarithmic derivatives of \eqref{prodeul}, we have for $\sigma >1$ the equality
\begin{equation}\label{logderi}
- \frac{L\rq{}(s, f\otimes \chi)}{L(s, f\otimes \chi)}=\sum_{p}\frac{\lambda_{f}(p)\, \chi (p)\, (\log p)\,  p^{-s} -2 \chi^2(p)(\log p) \, p^{-2s}} {1- \lambda_{f}(p)\chi(p)p^{-s} +\chi^2 (p)\,p^{-2s}}.
\end{equation}
We take a point $s=\sigma+it$ in the region $\Omega$. We shall consider $t$ as fixed and develop different arguments according to the value of $\sigma$. We first assume that 
\begin{equation}\label{101/100}
\Re s=\sigma \geq 1+ \frac{1}{100}.
\end{equation}
Then, the inequality  \eqref{L1} (with $q=1$)   combined with \eqref{logderi}  easily shows
$$
\frac{L\rq{}(s, f\otimes \chi)}{L(s, f\otimes \chi)} \ll_{f} 1,
$$
  uniformly for $s$ satisfying \eqref{101/100}. We now suppose that $s$ satisfies
\begin{equation}\label{<101/100}
1+\frac{c}{10\,\LL}\leq \sigma \leq \frac{101}{100}.
\end{equation}
Since $\displaystyle{\frac{L\rq{}(s, f\otimes \chi)}{L(s, f\otimes \chi)}}$ converges absolutely in the region  $\Re s>1$ (see  \eqref{L1} \&   \eqref{logderi})  we expand it in Dirichlet series:
\begin{equation}
-\frac{L\rq{}(s , f\otimes \chi)}{L(s , f\otimes \chi)} =\sum_{n \geq 1}\frac{\Lambda_{f\otimes \chi}(n)}{n^{s }}
\end{equation}
(see \cite[(5.25)]{IK}). The support of the function $\Lambda_{f\otimes \chi}$ is included
in the set of powers of primes. We deduce the inequality
$$
\left|-\frac{L\rq{}(s , f\otimes \chi)}{L(s , f\otimes \chi)}\right| \leq
\sum_p \frac{\vert \lambda_f (p)\vert \log p}{p^{\sigma }} +O(1),
$$
 the contribution from the higher powers of primes being absorbed in the $O(1)$ term thanks to the Kim-Sarnak bound \eqref{7/64}. Applying (\ref{L21}) to the above sum via partial summation,  we get the inequalities
\begin{align}
-\frac{L\rq{}(s , f\otimes \chi)}{L(s , f\otimes \chi)} &\ll_f \frac{1}{\sigma  -1}+1\notag\\
&\ll_{f} \LL \label{logder2},
\end{align}
uniformly for $s$ satisfying \eqref{<101/100} and thus the bound \eqref{logder} for $s$ in that region.

The imaginary part $t$  being fixed all the time, we consider the three points
\begin{equation}\label{ss1s0}
s=\sigma +it, 
s_1= 1+\frac{c}{10\,\LL} +it, 
s_0 =\frac{101}{100} +it,
\end{equation}
where $\sigma$ satisfies
\begin{equation}\label{ineqsigma}
 1-\frac{c}{6\,\LL}\leq \sigma < 1+\frac{c}{10\,\LL}:=\sigma_{1}.
\end{equation}
 We plan to apply Lemma \ref{BC} twice  to the function $h(s) = L(s , f\otimes \chi)$ at the point  $s_0$ and $r=1/2$.   Note that, uniformly over $t$, one has $h(s_0) \asymp 1$ by the Dirichlet series and the Euler product expression \eqref{prodeul}.
  By \eqref{polgrow}, we can choose  
  $
M \ll \LL,
$
where the implied constant is absolute. So we can write the two equalities
 \begin{equation}\label{zerosums}
\frac{L\rq{}(s, f\otimes \chi)}{L(s, f\otimes \chi)} = \sum_{\substack{|s_0-\rho| <1/4\\ L(\rho,f\otimes \chi)=0}}\frac{1}{s-\rho}+ O( \LL),
\end{equation}
and 
\begin{equation}\label{zerosums1}
\frac{L\rq{}(s_1, f\otimes \chi)}{L(s_1, f\otimes \chi)} = \sum_{\substack{|s_0-\rho| <1/4\\ L(\rho,f\otimes \chi)=0}}\frac{1}{s_1-\rho}+ O( \LL),
\end{equation}
since we have $\vert s_1 -s_0\vert \leq \vert s-s_0\vert \leq 1/8$. Subtracting \eqref{zerosums} from \eqref{zerosums1} and using \eqref{logder2} (at the point $s_1$) we deduce the equality 
 $$
\frac{L\rq{}(s, f\otimes \chi)}{L(s, f\otimes \chi)} = \sum_{\substack{|s_0-\rho| <1/4\\ L(\rho,f\otimes \chi)=0}}\frac{1}{s-\rho}  \ -  
\sum_{\substack{|s_0-\rho| <1/4\\ L(\rho,f\otimes \chi)=0}}\frac{1}{s_1-\rho}+O_{f} (\LL).
$$
Moreover,  we have the inequalities
\begin{align*}
\frac{1}{s-\rho} - \frac{1}{s_1-\rho}& \ll \frac{|s-s_1|}{|s -\rho|^2}\\
& \ll \frac{1}{\LL\,|s -\rho|^2}\\
&\ll \Re \frac{1}{s_1-\rho},
\end{align*}
since,  by Theorem \ref{zf} and the definitions \eqref{ss1s0}, we have the inequalities
$$
\vert s-\rho\vert \gg \vert s_1 -\rho \vert \textit{ and } \Re (s_{1}-\rho) \gg \LL^{-1},$$
valid uniformly.
Now we sum over the zeros $\rho$ of $L(s, f\otimes \chi)$ with $|s_0-\rho| <1/4$ and apply  (\ref{zerosums1}) and (\ref{logder2}) again to obtain 
\begin{equation}\label{11:49}
\frac{L\rq{}(s, f\otimes \chi)}{L(s, f\otimes \chi)} \ll_{f}\mathcal{L}.
\end{equation}
This gives \eqref{logder} when $\chi$ is primitive.

$\bullet$ \textit{ $\chi$ is not primitive.}
 We suppose that the Dirichlet character $\chi$ modulo $q$ is induced by 
a primitive character $\chi^*$ modulo $q^*$. From the equality
$$
L(s,f \otimes \chi)=L(s, f\otimes \chi^*)\prod_{p\mid q, \, p\nmid q^*}L_p (s,f\otimes \chi^*),
$$
we deduce the following equality between logarithmic derivatives
$$ - \frac{L\rq{}(s, f\otimes \chi)}{L(s, f\otimes \chi)}
= - \frac{L\rq{}(s, f\otimes \chi^*)}{L(s, f\otimes \chi^*)} +O\Bigl( \sum_{p\mid q,\, p\nmid q^*}\frac{\vert \lambda_{f}(p)\vert \log p}{p^\sigma}\Bigr)+O(1),
$$
where, for the second term on the right hand side, we use a uniform lower bound
for $\vert L_p (s,f\otimes \chi^*)\vert$ for $\sigma \geq  99/100$ and this will be proved in Lemma \ref{uniflowLp} below.
Using \eqref{7/64} once more, we have the equality
\begin{align*}
- \frac{L\rq{}(s, f\otimes \chi)}{L(s, f\otimes \chi)}&= - \frac{L\rq{}(s, f\otimes \chi^*)}{L(s, f\otimes \chi^*)} +O\Bigl(\sum_{p\mid q}p^{-\frac{3}{4}}\Bigr)+O (1)\\
 &= - \frac{L\rq{}(s, f\otimes \chi^*)}{L(s, f\otimes \chi^*)}+O\bigl( \log^\frac{1}{4} (q+1)\bigr),
\end{align*}
uniformly for $\sigma\geq 99/100$.
Combining with \eqref{11:49}, we complete the proof of Lemma \ref{Logder} in all the cases.
\end{proof}

\subsection{Bounds for $L$ and $L^{-1}$ inside $\Omega$}
From Lemma \ref{Logder}, we now deduce upper bounds for $L$, $L^{-1}$ and some allied functions  inside $\Omega$.
\begin{lemma}\label{boundL-1} Under the conditions of Lemma \ref{Logder}, we have the uniform bound
$$
L(s, f\otimes \chi)\text{ and }L^{-1}(s, f\otimes \chi) \ll_f \LL,
$$
for all $s\in \Omega$ where the implied constant depends only on $f$.
\end{lemma}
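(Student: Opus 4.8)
The plan is to obtain both inequalities simultaneously by integrating the logarithmic-derivative estimate of Lemma \ref{Logder} along a horizontal segment, starting from an anchor far to the right where $L(s,f\otimes\chi)$ is trapped between two positive constants. Fix $s=\sigma+it\in\Omega$. If $\sigma\geq 1+\tfrac1{100}$, then the absolutely convergent Dirichlet series and Euler product \eqref{prodeul} (convergence furnished by \eqref{L1}) give $\log\vert L(s,f\otimes\chi)\vert=O_f(1)$, so both bounds are trivial; hence we may assume $\sigma<1+\tfrac1{100}$. Put $s_0=2+it$, for which $\log\vert L(s_0,f\otimes\chi)\vert=O_f(1)$ by the same reasoning (here even the pointwise bound \eqref{7/64} suffices). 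It then suffices to estimate
\[
\log\bigl(L(s,f\otimes\chi)/L(s_0,f\otimes\chi)\bigr)=-\int_\sigma^{2}\frac{L'}{L}(u+it)\,\diff u .
\]
Taking absolute values bounds $\vert L(s,f\otimes\chi)\vert$ from above, while taking real parts bounds it from below (hence $\vert L^{-1}(s,f\otimes\chi)\vert$ from above); both are therefore controlled by $\int_\sigma^{2}\bigl\vert\tfrac{L'}{L}(u+it)\bigr\vert\,\diff u$.

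The decisive point, which is what separates the linear bound $\ll_f\LL$ from the trivial $\exp(O(\LL))$, is that on the bulk of the segment the logarithmic derivative must be controlled by the absolutely convergent Dirichlet series rather than by the crude bound $\LL$. Indeed, for $u>1$ the expansion used in the proof of Lemma \ref{Logder} gives $\bigl\vert\tfrac{L'}{L}(u+it)\bigr\vert\leq\sum_p\frac{\vert\lambda_f(p)\vert\log p}{p^{u}}+O_f(1)$, the higher prime powers being absorbed via \eqref{7/64}. We split at $u=1+\delta$ with $\delta:=\tfrac{c}{10\LL}$. On the thin strip $\sigma\leq u<1+\delta$, of length $\ll 1/\LL$, we fall back on $\tfrac{L'}{L}\ll_f\LL$ from \eqref{11:49}, contributing only $O_f(1)$. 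On the range $1+\delta\leq u\leq 2$, interchanging summation and integration produces a clean cancellation of $\log p$:
\[
\int_{1+\delta}^{2}\sum_p\frac{\vert\lambda_f(p)\vert\log p}{p^{u}}\,\diff u=\sum_p\vert\lambda_f(p)\vert\bigl(p^{-1-\delta}-p^{-2}\bigr)\leq\sum_p\frac{\vert\lambda_f(p)\vert}{p^{1+\delta}} .
\]
Thus $\int_\sigma^{2}\bigl\vert\tfrac{L'}{L}(u+it)\bigr\vert\,\diff u\leq\sum_p\vert\lambda_f(p)\vert\,p^{-1-\delta}+O_f(1)$, and both inequalities of the lemma follow once this sum is shown to be $\leq\log\LL+O_f(1)$.

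It remains to estimate $\sum_p\vert\lambda_f(p)\vert\,p^{-1-\delta}$. By \eqref{L22} and partial summation this is $\ll_f\log(1/\delta)=\log\LL+O_f(1)$, which already yields the bound up to the value of the implied constant; the main obstacle is to see that the \emph{coefficient} of $\log\LL$ is at most $1$, i.e.\ that the mean value of $\vert\lambda_f(p)\vert$ over primes does not exceed $1$, for otherwise one obtains only a power $\LL^{D_f}$ with $D_f$ possibly larger than $1$. This is precisely where the Rankin--Selberg normalization is used: Cauchy--Schwarz gives $\sum_{p\leq X}\vert\lambda_f(p)\vert\leq\pi(X)^{1/2}\bigl(\sum_{p\leq X}\vert\lambda_f(p)\vert^2\bigr)^{1/2}$, and the simple pole of $L(s,f\otimes\overline f)$ at $s=1$ (the input underlying Theorem \ref{liuye} and \eqref{L2}) forces $\sum_{p\leq X}\vert\lambda_f(p)\vert^2\sim\pi(X)$, so the leading constant is $\leq 1$ and the exponent in the final estimate is pinned to $1$. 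Granting this, $\bigl\vert\log(L(s)/L(s_0))\bigr\vert\leq\log\LL+O_f(1)$, and exponentiating delivers both $L(s,f\otimes\chi)\ll_f\LL$ and $L^{-1}(s,f\otimes\chi)\ll_f\LL$; the remainder of the argument is routine bookkeeping inherited from Lemma \ref{Logder}.
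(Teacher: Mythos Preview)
Your approach is sound and reaches the conclusion, but it is genuinely different from the paper's, and one step needs a small tightening to deliver the exact exponent~$1$.

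\textbf{What the paper does differently.} The paper does not anchor at $\Re s=2$ and integrate the whole way. Instead it anchors at $s_1=1+\tfrac{c}{10\LL}+it$, just to the right of the boundary of $\Omega$, and bounds both $|L(s_1,f\otimes\chi)|$ and $|L^{-1}(s_1,f\otimes\chi)|$ \emph{directly}, each by $O_f(\LL)$. The upper bound for $|L(s_1)|$ is immediate from the Dirichlet series and \eqref{L1} via partial summation. For $|L^{-1}(s_1)|$ the paper introduces the local factors $M_p(s)=1-\lambda_f(p)\chi(p)p^{-s}$ and writes $L^{-1}(s)=L_2(s)\bigl(\prod_{p\ge3}M_p(s)\bigr)G_{\ge3}(s)$, where $G_{\ge3}$ is an absolutely convergent Euler product for $\Re s\ge\tfrac{99}{100}$; the point is that $\prod_{p\ge3}M_p(s_1)=\sum_{2\nmid n}\mu(n)\chi(n)\lambda_f(n)n^{-s_1}$ by multiplicativity on squarefrees, and this is again $\le\sum_n|\lambda_f(n)|n^{-\sigma_1}\ll_f\LL$ by the \emph{same} estimate \eqref{L1}. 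Having trapped $|\log L(s_1)|\le\log\LL+O_f(1)$, the paper integrates $L'/L\ll_f\LL$ over the short segment $[s,s_1]$ of length $O(\LL^{-1})$ to transfer the bound to $s$. The virtue of this route is economy of input: only the first-moment bound \eqref{L1} is needed, and the Rankin--Selberg normalisation never enters.

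\textbf{What your route buys and costs.} You gain a clean anchor where $L\asymp_f1$ for free, at the price of having to control $\int_\sigma^2|L'/L|\,\diff u$ with the correct leading constant. This forces you to know that the mean of $|\lambda_f(p)|$ over primes is at most~$1$, i.e.\ the asymptotic $\sum_{p\le X}|\lambda_f(p)|^2\sim\pi(X)$ (equivalently, cuspidality of $\mathrm{sym}^2f$)---an input the paper's proof avoids entirely.

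\textbf{The step to tighten.} Your Cauchy--Schwarz on the partial sum only gives $\sum_{p\le X}|\lambda_f(p)|\le(1+o(1))\pi(X)$; feeding this into partial summation yields $\sum_p|\lambda_f(p)|p^{-1-\delta}\le(1+o(1))\log(1/\delta)$, and without a rate on the $o(1)$ the surplus need not be $O_f(1)$ uniformly in $\delta$, so you only get $\LL^{1+\eta}$ for every $\eta>0$. The clean fix is to apply Cauchy--Schwarz directly to the weighted sum,
\[
\sum_p\frac{|\lambda_f(p)|}{p^{1+\delta}}\le\Bigl(\sum_p p^{-1-\delta}\Bigr)^{1/2}\Bigl(\sum_p\frac{|\lambda_f(p)|^2}{p^{1+\delta}}\Bigr)^{1/2},
\]
and to use $|\lambda_f(p)|^2=1+\lambda_f(p^2)$ together with $\sum_p\lambda_f(p^2)p^{-1-\delta}=\log L(1+\delta,\mathrm{sym}^2f)+O_f(1)=O_f(1)$ to see that both factors equal $(\log(1/\delta)+O_f(1))^{1/2}$. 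This gives $\log(1/\delta)+O_f(1)$ on the nose, and your argument then concludes as written.
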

\begin{proof} Let  $s $ and $s_1$ as in \eqref{ss1s0} and we first suppose that $\sigma$ satisfies \eqref{ineqsigma}. Integrating the bound given
by Lemma \ref{Logder} between $s_{1}$ and $s$, we obtain
the inequality
\begin{equation}\label{difference1}
\log L(\sigma_1+ it, f\otimes \chi) - \log L(\sigma+it, f\otimes \chi) \ll_{f} 1. 
\end{equation}
To bound $\vert L(s_1, f\otimes \chi)\vert$ from above, we use the Dirichlet series expression \eqref{prodeul} to write
\begin{align}\label{720} 
\vert  L(s_1, f\otimes \chi)\vert &\leq \sum_{n\geq 1} \frac{\vert\lambda_f (n)\vert}{n^{\sigma_1}}\nonumber\\
&\ll_f \LL, 
\end{align}
using the estimate \eqref{L1}   and partial summation.

To bound $\vert L(s_1, f\otimes \chi)\vert^{-1}$ from above we introduce local factors $M_p$ defined by
\begin{equation}\label{defMp}
M_p (s, f\otimes \chi) := 1-\frac{\lambda_f (p)\chi (p)}{p^s}
\end{equation}
for each prime $p$. If
$\Re s \geq \frac{99}{100}\textit{ and } p\geq 3$, it easily follows from \eqref{7/64} that 
\begin{equation}
M_p(s, f\otimes \chi)\not= 0 \textit{ and } L_p(s, f\otimes \chi)\not= 0.
\end{equation}
However, we shall obtain a more precise statement concerning $L_p$ below; namely, Lemma \ref{uniflowLp}.\\

Write the function 
$L^{-1} $ as
\begin{equation}\label{decompL-1}
L^{-1}(s)= L_2 (s)\  \Bigl(\,\prod_{p\geq 3}M_p (s)\, \Bigr)\ G_{\geq 3} (s),
\end{equation}
with $$
G_{\geq 3} (s):= \prod_{p\geq 3}\Bigl( L_p (s)/M_p (s)\Bigr)
$$
where we voluntarily dropped the symbol $f\otimes \chi$. Computing each of the local factors, we see that the function $G_{\geq 3} (s)$ has an expression as an infinite
product absolutely convergent for $\Re s \geq 99/100$; and hence $G_{\geq 3}$ is uniformly bounded in that region. In other words, uniformly over characters $\chi$ and for $\Re s \geq 99/100$, we have
\begin{equation}\label{defG3}
G_{\geq 3} (s) \textit { and } G_{\geq 3}^{-1} (s) \ll 1. 
\end{equation}
 For the second term in the right hand side of \eqref{decompL-1}, we may write
$$
\Bigl\vert \prod_{p\geq 3}M_p (s_1)\Bigr\vert =\Bigl\vert \,\sum_{2\nmid n}
\frac{\mu (n) \, \chi (n) \, \lambda_f (n)}{n^{s_1}}\Bigr\vert \leq 
\sum_{n\geq 1} \frac{\vert\lambda_f (n)\vert}{n^{\sigma_1}}\ll_f \LL
$$
by the multiplicativity of $\lambda_f (n)$ on squarefree integers and \eqref{720}. Furhermore, we have $\vert L_2 (s_1)\vert  \leq 3$. Gathering all these remarks into \eqref{decompL-1}, we deduce the inequality 
\begin{equation}\label{746}
\vert L^{-1}(s_1, f\otimes \chi) \vert \ll_f \LL.
\end{equation}
Now \eqref{720} and \eqref{746} yields \[\vert \log L (\sigma_1+it, f\otimes \chi)\vert \leq \log {\mathcal L} +O_f (1).\] Combining this with \eqref{difference1} we complete the proof of Lemma \ref{boundL-1} when $\sigma$ satisfies \eqref{ineqsigma}.
In the remaining case, when
 $
 \sigma > \sigma_{1},
 $
instead of using \eqref{difference1}, we merely adapt the proof of \eqref{720} and \eqref{746} as we are in the region of absolute convergence.
\end{proof}
\subsection{Extension to the $M$--function} The Dirichlet series attached to the
arithmetical function appearing in the second part of Theorem \ref{pnt2} is
\begin{equation}\label{defM0}
M(s, f\otimes \chi):= \sum_n \frac{\mu (n) \lambda_f (n) \chi (n)}{n^s}.
\end{equation}
By \eqref{L1}, we know that this series converge for $\Re s >1$. In that region, it admits an Euler product expansion
\begin{equation}\label{eull}
M(s, f\otimes \chi) =\prod_p \, M_p (s, f\otimes \chi),
\end{equation}
where $M_p (s, f\otimes \chi)$ is defined in \eqref{defMp}. The Dirichlet series $M(s, f \otimes \chi)$ is not far from
$L^{-1}(s)$. More precisely, from \eqref{decompL-1} and from \eqref{eull}, we deduce the equality
which is true for every $s\in \Omega$
\begin{equation}\label{central}
M(s, f\otimes \chi)= L_2 (s)^{-1} L^{-1} (s) M_2 (s) G_{\geq 3} ^{-1}(s).
\end{equation}
By \eqref{defG3} and Lemma \ref{boundL-1}  we control all the terms 
but the first one in the region $s\in \Omega$. Now none of the 
local Euler factor $L_{p}$ defined in \eqref{defLp} vanishes in the half plane $\{s\,:\ \Re s >1\}$, otherwise
the global $L$--function would have a pole in this region which it does not by the general theory of automorphic $L$--functions. We shall now prove a uniform lower bound for these functions $\vert L_{p}\vert $, in particular, for $p=2$. We have
\begin{lemma}\label{uniflowLp}  There is an absolute constant $C_{0} >0$ such that for any Hecke-Maass cusp form $f$ for the full modular 
group, any Dirichlet character $\chi (\textnormal{mod } q)$ for any integer $q \geq 1$, any prime $p\geq 2$, and for every $s$ such that
$\Re s \geq \frac{99}{100}$, the bound
$$
\bigl\vert L_{p}(s, f\otimes \chi)\bigr \vert \geq C_{0}
$$
holds.
\end{lemma}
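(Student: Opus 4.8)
The plan is to factor the local Euler factor into two linear pieces governed by the Satake parameters of $f$ at $p$, and then to note that the Kim--Sarnak exponent $7/64$ is strictly less than $99/100$, so that each linear piece is bounded away from zero by an absolute amount. Since $f$ has level one (hence trivial nebentypus), for every prime $p$ we may write $\lambda_f(p) = \alpha_p + \beta_p$ with $\alpha_p\beta_p = 1$; here $\alpha_p$, $\beta_p$ are the Satake parameters of $f$ at $p$, which are well defined whether $f$ is a genuine Maass cusp form or is induced from a holomorphic form (in the holomorphic case Deligne's theorem even gives $|\alpha_p| = |\beta_p| = 1$). From \eqref{defLp} and $\alpha_p\beta_p = 1$ one then has, for every $s$,
\[
L_p(s, f\otimes\chi) = \bigl(1 - \alpha_p\chi(p)p^{-s}\bigr)\bigl(1 - \beta_p\chi(p)p^{-s}\bigr),
\]
which is simply $1$ when $p\mid q$ because then $\chi(p) = 0$.

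Next I would estimate each factor from below. The Kim--Sarnak bound \eqref{7/64} gives $|\alpha_p|, |\beta_p| \leq p^{7/64}$ for every prime $p$, and of course $|\chi(p)| \leq 1$. Hence, for $\Re s \geq 99/100$,
\[
\bigl| 1 - \alpha_p\chi(p)p^{-s} \bigr| \;\geq\; 1 - |\alpha_p|\,|\chi(p)|\,p^{-\Re s} \;\geq\; 1 - p^{\frac{7}{64}-\frac{99}{100}} \;\geq\; 1 - 2^{\frac{7}{64}-\frac{99}{100}} \;=:\; \delta_0,
\]
where in the last inequality we used that $\frac{7}{64}-\frac{99}{100} < 0$, so that $p\mapsto p^{\frac{7}{64}-\frac{99}{100}}$ is decreasing and attains its maximum at $p = 2$; a direct computation gives $\delta_0 > 0$ (numerically $\delta_0 \approx 0.45$). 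The identical bound holds for the factor involving $\beta_p$, so multiplying the two,
\[
\bigl| L_p(s, f\otimes\chi) \bigr| \;\geq\; \delta_0^2 \;=:\; C_0 \;>\; 0,
\]
uniformly over all primes $p\geq 2$, all Dirichlet characters $\chi$, and all $s$ with $\Re s\geq 99/100$, with $C_0$ absolute. This is the assertion of the lemma.

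There is no real obstacle here; the single point requiring a little care is that one needs the bound on the \emph{individual} Satake parameters $\alpha_p$, $\beta_p$ and not merely on their sum $\lambda_f(p)$, uniformly over every prime (including those dividing $q$, where the factor is trivially $1$) and over the holomorphic case as well --- but this is precisely what \eqref{7/64} supplies. The numerical threshold $99/100$ in the statement is not essential: any exponent strictly larger than $7/64$ would make the same argument work, and $99/100$ is chosen merely to match the region in which the lemma is applied in the proofs of Lemma \ref{Logder} and Lemma \ref{boundL-1}.
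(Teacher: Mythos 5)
Your proof is correct, but it follows a genuinely different route from the paper. You factor the local factor \eqref{defLp} as $(1-\alpha_p\chi(p)p^{-s})(1-\beta_p\chi(p)p^{-s})$ with $\alpha_p\beta_p=1$ and bound each linear piece below by $1-2^{\frac{7}{64}-\frac{99}{100}}>0$, which treats every prime, including $p=2$, uniformly. The paper instead works only with the bound \eqref{7/64} on the trace $\lambda_f(p)$: for $p\geq 3$ the crude triangle inequality $|L_p|\geq 1-2p^{7/64-99/100}-p^{-99/50}>1/8$ suffices, but at $p=2$ that bound is negative, so the paper runs a separate compactness argument for $G(u,z)=1-2uz+z^2$ on $\{|u|\leq 2^{7/64},\,|z|\leq 2^{-99/100}\}$, using the reality of $\lambda_f(2)$ to show any zero would force $z$ real and $|\tfrac12(z+1/z)|\geq\tfrac12(2^{99/100}+2^{-99/100})>2^{7/64}$, a contradiction. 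The one point you should state more carefully is your appeal to \eqref{7/64}: as written in the paper, \eqref{7/64} bounds only $|\lambda_f(p)|\leq 2p^{7/64}$, and from that alone (with $\alpha_p\beta_p=1$ and $\lambda_f(p)$ real) one only deduces $|\alpha_p|\leq 2p^{7/64}$, which makes your per-factor estimate fail at $p=2$; what you actually need is the Kim--Sarnak theorem in its original form, namely the bound $|\alpha_p|,|\beta_p|\leq p^{7/64}$ on the individual Satake parameters, which the cited works \cite{KS,KS1,KS2} do provide (and Deligne in the holomorphic case). With that citation made precise, your argument is complete and arguably cleaner and more uniform than the paper's, at the cost of invoking a stronger form of the input; the paper's two-case proof buys the same conclusion from the weaker trace bound \eqref{7/64} plus self-adjointness of the Hecke operators.
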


\begin{proof}
When $p\geq 3$, one has the inequality
\begin{align*}
\bigl\vert L_{p}(s, f\otimes \chi)\bigr \vert &\geq 1-\frac{2\cdot p^\frac{7}{64}}{p^\frac{99}{100}}-\frac{1}{p^\frac{99}{50}}\\
&\geq  1-2\cdot 3^{-\frac{1409}{1600}}-3^{-\frac{99}{50}}\\
& >1/8.
\end{align*}
by a direct application of the definition \eqref{defLp} and of the inequality \eqref{7/64}. The prime 
$2$ requires a more careful 
analysis. We write $z:= \chi (2)/2^{\sigma+it}$, $u:= \frac{1}{2} \lambda_f (2)$ and
$$
L_2(s, f\otimes \chi)=1-2uz+z^2:= G(u,z).
$$
By self-adjointness of Hecke operators, we know that the Hecke eigenvalues, in particular, $\lambda_f(2)$ and hence $u$, are real.
The existence of $C_0>0$ such that\\ $\vert L_2 (s,f\otimes \chi)\vert \geq C_0$
for all $s$ with $\Re s \geq 99/100$ is a consequence of the inequality
\begin{equation}\label{AAA}
\vert G(u, z) \vert \geq C_0,
\end{equation}
for all $(u,z)$ belonging to the set
$$
{\mathcal K}:= \{ (u,z)\in {\mathbb R}\times {\mathbb C}\,;\ \vert u\vert \leq 2^\frac{7}{64},\ \vert z \vert \leq 2^{-\frac{99}{100}}\},
$$
(by an application of \eqref{7/64}).
Since $\mathcal K$ is compact and $G$ is a continuous function, the proof of \eqref{AAA}
is reduced to the proof of the non vanishing of $G(u,z)$ on $\mathcal K$.

Let $(u_0,z_0)\in {\mathcal  K}$ satisfying $G(u_0, z_0)=0.$ We then have
$$
u_0=\frac{1}{2}(z_0+\frac{1}{z_0})=\frac{1}{2} (z_0+\frac{\overline{z_0}}{\vert z_0\vert^2}).
$$
This implies that $z_0$ is necessarily real, since $\vert z_0\vert \not=1.$

Finally, for $z$ real such that $\vert z\vert \leq 2^{-\frac{99}{100}}$, we have
$$
\Bigl\vert \frac{1}{2}(z+\frac{1}{z})\Bigr\vert \geq \frac{1}{2}(2^\frac{99}{100} +2^{-\frac{99}{100}})=1.\ 24\cdots >2^\frac{7}{64}=1.\ 07\cdots
$$
This gives a contradiction. Hence $G$ cannot vanish on $\mathcal K$ and \eqref{AAA} is proved.
The proof of Lemma \ref{uniflowLp} is now complete.
\end{proof}
 
It remains to gather in \eqref{central} the upper bounds contained in the Lemmas
\ref{boundL-1} \&  \ref{uniflowLp}, in formula \eqref{defG3}, and the bound $\vert M_2 (s)\vert \leq 3$ for $s\in \Omega$ to obtain the following.
\begin{lemma}\label{boundM} Under the conditions of Theorem 4.1, we have the bound 
$$
M(s, f\otimes \chi)  \ll_f \LL,
$$
uniformly for $s\in \Omega$.

\end{lemma}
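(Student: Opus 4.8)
The plan is to read the bound off directly from the factorization \eqref{central}, estimating each of its four local factors separately on $\Omega$, since all the analytic content has already been packed into the preparatory lemmas.

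First I would observe that $\Omega$ is contained in the half-plane $\{s:\Re s\geq 99/100\}$. Indeed, for $s=\sigma+it\in\Omega$ one has $\sigma\geq 1-c/(6\LL)$, and since $\LL=\log\bigl(q(|t|+|r|+2)\bigr)\geq\log 2$ while $c$ is the (small) absolute constant of Theorem \ref{zf}, the correction $c/(6\LL)$ is at most $1/100$. Consequently every estimate established above for $\Re s\geq 99/100$ — in particular Lemma \ref{uniflowLp} and the convergence of $G_{\geq 3}$ in \eqref{defG3} — is available throughout $\Omega$.

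Next, recalling the identity \eqref{central},
$$
M(s,f\otimes\chi)=L_2(s)^{-1}\,L^{-1}(s)\,M_2(s)\,G_{\geq 3}^{-1}(s),
$$
I would bound the four factors in turn: $L^{-1}(s)\ll_f\LL$ by Lemma \ref{boundL-1}; $G_{\geq 3}^{-1}(s)\ll 1$ by \eqref{defG3}; $|M_2(s)|=|1-\lambda_f(2)\chi(2)2^{-s}|\leq 1+|\lambda_f(2)|\,2^{-99/100}\leq 3$ by the Kim--Sarnak bound \eqref{7/64} and $|\chi(2)|\leq 1$; and $L_2(s)^{-1}\ll 1$ by Lemma \ref{uniflowLp}, which furnishes precisely the needed lower bound $|L_2(s,f\otimes\chi)|\geq C_0$ for $\Re s\geq 99/100$. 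Multiplying these four bounds gives $M(s,f\otimes\chi)\ll_f\LL$ uniformly on $\Omega$, with an implied constant depending only on $f$.

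There is essentially no obstacle left at this stage: the genuine difficulty was already disposed of in Lemma \ref{boundL-1} (resting in turn on Lemma \ref{Logder} and the Hoffstein--Ramakrishnan zero-free region) and in the elementary non-vanishing argument of Lemma \ref{uniflowLp} for the awkward prime $2$. The only points needing a moment's care are the harmless reduction $\Omega\subseteq\{\Re s\geq 99/100\}$ and the bookkeeping that all implied constants are absolute apart from the dependence on $f$ inherited from Lemma \ref{boundL-1}.
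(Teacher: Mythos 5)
Your proof is correct and follows essentially the same route as the paper: the authors likewise obtain Lemma \ref{boundM} by combining the factorization \eqref{central} with Lemma \ref{boundL-1} for $L^{-1}$, the bound \eqref{defG3} for $G_{\geq 3}^{-1}$, the estimate $|M_2(s)|\leq 3$, and Lemma \ref{uniflowLp} for $L_2(s)^{-1}$. Your explicit remark that $\Omega\subseteq\{\Re s\geq 99/100\}$ (so that those estimates apply) is a point the paper leaves implicit, but it is the same argument.
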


We now have all the tools to give a sketch of the proof of Theorem \ref{pnt2}.
\subsection{Proof of Theorem \ref{pnt2}} The idea of the proof is quite standard (see for instance  \cite[Theorem 5.13]{IK}).We apply the Perron formula (see  \cite[Lem. 3.12]{Tit})
 to the Dirichlet series $M(s, f\otimes \chi)$ defined in \eqref{defM0} 
and move the contour inside the zero-free region where we can give a good estimate of  the  function $M$. We use a smoothed version of the classical Perron formula using Mellin inversion. To this end, we consider a function $\phi$ with support on $[0,X+Y]$, such that $0\leq \phi(x) \leq  1$ for $0\leq x \leq X+Y$ and $ \phi(x)  =0 $ for $x \geq  X+Y$. Here, $Y$ ($1 \leq Y \leq X/2 $) is a parameter to be chosen later. To be specific, we take 
 $$
 \phi(x) = \textnormal{min }\left(\frac{x}{Y}, 1, 1+\frac{X-x}{Y} \right)
$$
 for $0 \leq x \leq X+Y$ and 
$$
 \phi(x)=0
$$
 elsewhere. Then the Mellin transform of $\phi$ satisfies (see \cite[p.111]{IK})
 \begin{equation}\label{mellin}
 \hat{\phi}(s)  \ll \frac{X^\sigma}{|s|} \textnormal{min }\left(1, \frac{X}{|s|Y}\right)
 \end{equation}
  for $1/2\leq \Re s \leq 2$. After these preliminaries, we now give the proof of the theorem.
\begin{proof}
We have 
 \begin{multline}\label{phi}
 \sum_{n\leq X}\lambda_f(n) \chi(n)\mu(n) = \sum_{n \geq 1}\lambda_f(n) \chi(n)\mu(n)\phi(n)\\
  +
  O\Bigl(\sum_{0 <n \leq  Y}|\lambda_f(n)|\Bigr)+
O\Bigl(\sum_{X <n \leq X+Y}|\lambda_f(n)|\Bigr),
 \end{multline}
and also 
\begin{equation}\label{phi2}
\sum_{0 <n \leq  Y}|\lambda_f(n)|,    \quad  \sum_{X <n \leq X+Y}|\lambda_f(n)| \ll_{f} Y
\end{equation}
 by Cauchy\rq{}s inequality and the asymptotic formula (\ref{L2}), provided $Y \geq ~X^{\frac{3}{5}}$. 
  By the Mellin inversion formula, we can write
  \begin{equation}\label{mell}
  \sum_{n \geq 1}\lambda_f(n) \chi(n)\mu(n)\phi(n) = \frac{1}{2\pi i} \int_{(2)} M(s, f\otimes \chi)  \hat{\phi}(s)\diff s.
  \end{equation}
 Now we move the contour of the integral to the left  and deform it so that it coincides with the boundary of the region $\Omega$. Since $\Omega$ is wholly contained in the zero-free region for $L(s,f\otimes \chi)$, 
we do not encounter any pole of $M$ and thus it remains to estimate the integral over the left edge $\partial \Omega$ of $\Omega$. We assume that $q$ is not very large; namely
  \begin{equation}\label{assuforq}
  q\leq \exp (2c_{1}\sqrt{\log X}),
  \end{equation}
  otherwise, \eqref{thermo} is a trivial consequence of \eqref{L1}.  Let us write $T:=X/Y$, a parameter to be chosen later subject to $2 \leq T \leq X^{1/4}$. By   \eqref{mellin}, \eqref{mell} 
    and Lemma \ref{boundM}, we deduce the inequalities  
      \begin{align}
  \sum_{n \geq 1}\lambda_f(n) \chi(n)\mu(n)\phi(n)& \ll_f  \int_{\partial \Omega}\left|\LL \cdot  \frac{X^\sigma}{|s|} \min\left(1, \frac{X}{|s|Y}\right)\right| \diff |s|  \notag\\
& \ll_{f}  \Biggl\{\  \int_{1}^{T^2} \LL\,\frac{X^{\sigma (t)}}{t} \diff t +\int_{T^2}^\infty\LL\, \frac{X^2}{Y} \cdot \frac{1}{t^2}\diff t
\,\Biggr\}\notag \\
&\ll_f  \bigl( X^{\sigma (T^2)} +Y \bigr) \log^2 (q(T+\vert r\vert +2)),\label{contour}
  \end{align}
with 
$$
\sigma(t) :=1- \frac{c}{6{\mathcal L}}
$$
for $t$ real. For the definition of $\LL$ see \eqref{defL}.  It remains to put this in \eqref{phi}, to use \eqref{phi2}, to choose
$$
T:= \exp\bigl( 2c_1\sqrt{\log X}\bigr)),
$$
and to recall the assumption \eqref{assuforq} to finally write the inequalities
\begin{align*} 
  \sum_{n\leq X}\lambda_f(n) \chi(n)\mu(n)&
   \ll_{f}    \Bigl( X^{\sigma (T^2)} +\frac{X}{T}\Bigr) \log^2 (q(T+\vert r\vert +2))\\
 &\ll_f X\,\Bigl\{\exp\Bigl(- \frac{c\log X}{6 \log (\exp (7c_1 \sqrt{\log X}))} \Bigr) \\
 &\qquad\qquad + \exp\bigl( -2c_1 \sqrt{\log X}\bigr)
 \Bigr\}  \log^2 (q(T+\vert r\vert +2))\\
 &\ll_f X \exp\bigl( -c_1 \sqrt{\log X}\bigr),
\end{align*}
by the definition of $c_1$. This completes the proof of Theorem 
\ref{pnt2}.
 
\end{proof}


\section{Hecke multiplicative functions} 

\subsection{Hecke relation}
The following relation satisfied by Hecke eigenvalues is well known. See \cite[Chap. 14]{IK} \&  \cite[Chap. 8]{Iw2}, for instance.
 \begin{lemma}\label{Hec1} 
For every $m$ and $n\geq 1$, we have
 \begin{equation}
  \lambda_f(m)\lambda_f(n)=\sum_{d\mid (m,n)} \lambda_f \left( \frac{mn}{d^2}\right).
\end{equation}
\end{lemma}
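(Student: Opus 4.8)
The plan is to deduce the relation from the composition law of the Hecke operators together with the fact that $f$ is a simultaneous eigenform, and then to reduce everything to a one-variable computation at a single prime.

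First I would recall the composition law in the level-one Hecke algebra: for all $m,n\geq 1$,
\[
T'_m\,T'_n=\sum_{d\mid(m,n)}T'_{mn/d^2},
\]
which is standard (it follows from the decomposition of the relevant double cosets, or from the action on index-$n$ sublattices of $\Z^2$; see \cite[Chap.~14]{IK}, \cite[Chap.~8]{Iw2}). Applying both sides to the Hecke-Maass cusp form $f$, using $T'_n f=\lambda_f(n)f$ and $f\not\equiv 0$, gives the asserted identity at once. In particular, taking $(m,n)=1$ shows that $n\mapsto\lambda_f(n)$ is multiplicative, and taking $m=p$, $n=p^{k}$ yields the recursion $\lambda_f(p^{k+1})=\lambda_f(p)\lambda_f(p^{k})-\lambda_f(p^{k-1})$ with $\lambda_f(1)=1$.

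Alternatively — and this is the version I would actually write, since the ingredients are already available in the paper — I would verify the identity directly by reducing to prime powers. Writing $m=\prod_p p^{a_p}$ and $n=\prod_p p^{b_p}$, one has $(m,n)=\prod_p p^{\min(a_p,b_p)}$, a divisor $d\mid(m,n)$ amounts to a choice of exponents $0\le d_p\le\min(a_p,b_p)$, and $mn/d^2=\prod_p p^{a_p+b_p-2d_p}$; hence, by multiplicativity of $\lambda_f$,
\[
\sum_{d\mid(m,n)}\lambda_f\!\left(\frac{mn}{d^2}\right)=\prod_p\ \sum_{j=0}^{\min(a_p,b_p)}\lambda_f\!\left(p^{a_p+b_p-2j}\right),
\]
while $\lambda_f(m)\lambda_f(n)=\prod_p\lambda_f(p^{a_p})\lambda_f(p^{b_p})$. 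So it suffices to prove, for each prime $p$ and all $a,b\geq 0$,
\[
\lambda_f(p^a)\,\lambda_f(p^b)=\sum_{j=0}^{\min(a,b)}\lambda_f\!\left(p^{a+b-2j}\right).
\]
For this I would introduce the Satake parameters $\alpha_p,\beta_p$ with $\alpha_p\beta_p=1$ and $\alpha_p+\beta_p=\lambda_f(p)$ (they exist because the local Euler factor of $L(s,f)$ is $1-\lambda_f(p)p^{-s}+p^{-2s}$, as used in the proof of Lemma~\ref{Logder}), so the recursion gives the closed form $\lambda_f(p^{k})=(\alpha_p^{k+1}-\beta_p^{k+1})/(\alpha_p-\beta_p)$. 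Substituting this into both sides and summing the resulting geometric series in $\alpha_p^2$ and $\beta_p^2$, with the simplification $\alpha_p\beta_p=1$, both sides collapse to $(\alpha_p^{a+b+2}+\beta_p^{a+b+2}-\alpha_p^{\,|a-b|}-\beta_p^{\,|a-b|})/(\alpha_p-\beta_p)^2$; the degenerate case $\alpha_p=\beta_p$ follows by continuity.

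There is no genuine obstacle here: the argument is entirely routine. The only points needing care are the bookkeeping that reduces the general relation to the single-prime identity (i.e.\ checking that the divisor sum on the right factors over primes as above) and, if one follows the operator-theoretic route, the derivation of $T'_mT'_n=\sum_{d\mid(m,n)}T'_{mn/d^2}$ from the double-coset description — both of which are classical and recorded in the references cited.
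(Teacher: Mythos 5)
Your proposal is correct, but note that the paper itself offers no proof of this lemma: it is recorded as ``well known'' with pointers to \cite[Chap.~14]{IK} and \cite[Chap.~8]{Iw2}, so you are supplying an argument the authors delegate to the literature. Of your two routes, the operator-theoretic one is exactly what those references establish: the composition law $T'_mT'_n=\sum_{d\mid(m,n)}T'_{mn/d^2}$ in the level-one Hecke algebra, applied to a simultaneous eigenform, gives the identity immediately, and it is the logically primary derivation since multiplicativity of $\lambda_f$ and the recursion at prime powers are corollaries rather than inputs. Your second, ``Satake parameter'' route is also fine as a verification, and the character identity $\lambda_f(p^a)\lambda_f(p^b)=\sum_{j=0}^{\min(a,b)}\lambda_f(p^{a+b-2j})$ does follow from the closed form with $\alpha_p\beta_p=1$; but be aware that it consumes as hypotheses the multiplicativity of $\lambda_f$ and the shape of the local Euler factor $1-\lambda_f(p)p^{-s}+p^{-2s}$ (the factor \eqref{defLp} with trivial character), and in the standard development these are themselves deduced from the Hecke composition law, so taken alone that route is closer to a consistency check than an independent proof. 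If you write this up, commit to the first argument (double-coset or lattice-counting proof of the composition law, then evaluate on $f$), and present the prime-power computation only as the reduction step or as an illustrative check.
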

\begin{definition}
We call a function $\lambda: \mathbb{N} \longrightarrow \mathbb{R}$ \textit{Hecke multiplicative} if $\lambda(1)=1$ and $\lambda$ satisfies the relation
\begin{equation}\label{H1}
  \lambda(m)\lambda(n)=\sum_{d\mid (m,n)} \lambda \left( \frac{mn}{d^2}\right).
\end{equation}
\end{definition}
Here we restrict ourselves to real valued functions as this is enough for our purpose and in what follows we need positivity
of $\lambda^2$.
Soundararajan \cite{Sound} had introduced a similar definition in the context of his work on the {\it Quantum Unique Ergodicity Conjecture}. 
Note that a Hecke multiplicative function is automatically multiplicative.
From (\ref{H1}), we can easily deduce the dual formula:
\begin{equation}\label{H2}
 \lambda (mn)=\sum_{d\mid (m,n)} \mu (d) \lambda\Bigl( \frac{m}{d}\Bigr) \lambda\Bigl( \frac{n}{d}\Bigr).
 \end{equation}


\subsection{The $\lambda^*$ function}

Given a Hecke multiplicative function $\lambda$, we introduce a new function $\lambda^*$ which can be thought of as an analogue of 
(square-root of) the 
 divisor function. 
\begin{definition} Let $\lambda\,:\ \N \rightarrow \R$ be an arithmetic function. We define
the arithmetical function $\lambda^*$
by declaring
\begin{equation}\label{defto}
\lambda^* (n)=\Bigl( \, \sum_{d\mid n} \lambda^2 (d)\,
\Bigr)^\frac{1}{2}\text{ for }n\geq 1.
\end{equation}
\end{definition}
Note that in the trivial case $\lambda\equiv 1$ then we have  $\lambda^*(n)=\sqrt{d(n)}$ where $d(n)$ 
is the number of postive integers
of the integer $n$.
When $\lambda$ is a Hecke multiplicative function, the associated $\lambda^*$ inherits some regularity properties
which justify its introduction.  Here are some of these.
 \begin{lemma}\label{propto} Let $\lambda$ be a Hecke multiplicative function.
Let $m$ and $n$ be any positive integers. Then the following holds.
\begin{enumerate}[(a)]
\item $\lambda^*(n) \geq 1$, \label{one}
\item  $\vert \lambda (m) \vert \leq \lambda^* (m)$,\label{23)}
\item If $m\mid n$, then $ \lambda^* (m)\leq \lambda^* (n),$\label{b)}
\item If $(m,n)=1$ then $\lambda^*(mn)=\lambda^* (m) \lambda^* (n),$\label{a)}
\item $\vert \lambda (mn)\vert \leq \lambda^* (m) \lambda^* (n),$\label{tel}
\item $\lambda^* (mn) \leq   d^\frac{1}{2}(m) \,d^\frac{1}{2}(n) \, \lambda^* (m)\, \lambda^* (n),$\label{cold}
\item  $\vert\,\lambda (m)\,\lambda (n)\,\vert \leq d^\frac{1}{2} ((m,n))\, \lambda^* (mn).$\label{f)}
\end{enumerate}
\end{lemma}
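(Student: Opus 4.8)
The plan is to establish the seven assertions in the order listed, since each later one leans on the earlier ones. Parts (a) and (b) are immediate from the definition \eqref{defto}: since $1\mid n$ and $\lambda(1)=1$ (because $\lambda$ is Hecke multiplicative), the sum defining $\lambda^*(n)^2$ contains the term $\lambda^2(1)=1$, giving (a); and since $m\mid m$, that sum contains $\lambda^2(m)$, giving (b). Part (c) holds because every summand $\lambda^2(d)$ is nonnegative and the divisors of $m$ form a subset of the divisors of $n$ whenever $m\mid n$. For part (d), I would use that a Hecke multiplicative function is multiplicative, hence so is $\lambda^2$, hence so is the divisor sum $n\mapsto\sum_{d\mid n}\lambda^2(d)$; comparing the defining expressions when $(m,n)=1$ and taking positive square roots gives $\lambda^*(mn)=\lambda^*(m)\lambda^*(n)$.

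The first substantive step is (e). Here I would invoke the dual Hecke relation \eqref{H2}, namely $\lambda(mn)=\sum_{d\mid(m,n)}\mu(d)\,\lambda(m/d)\,\lambda(n/d)$, then apply the triangle inequality followed by the Cauchy--Schwarz inequality to the sum over $d\mid(m,n)$, obtaining
\[
|\lambda(mn)|\le\Bigl(\sum_{d\mid(m,n)}\lambda^2(m/d)\Bigr)^{1/2}\Bigl(\sum_{d\mid(m,n)}\lambda^2(n/d)\Bigr)^{1/2}.
\]
As $d$ runs over the divisors of $(m,n)$, the numbers $m/d$ run, without repetition, through a subset of the divisors of $m$, so the first factor is $\le\lambda^*(m)$; likewise the second is $\le\lambda^*(n)$. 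This is exactly (e).

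Part (f) then builds on (e): I would observe that $d\mapsto\bigl((d,m),\,d/(d,m)\bigr)$ injects the divisors of $mn$ into the set of pairs $(d_1,d_2)$ with $d_1\mid m$ and $d_2\mid n$ (the relation $d/(d,m)\mid n$ is checked prime by prime using $d\mid mn$), whence
\[
\lambda^*(mn)^2=\sum_{d\mid mn}\lambda^2(d)\le\sum_{d_1\mid m}\,\sum_{d_2\mid n}\lambda^2(d_1 d_2)\le\sum_{d_1\mid m}\,\sum_{d_2\mid n}\lambda^*(d_1)^2\,\lambda^*(d_2)^2,
\]
using (e) in the last step; bounding $\lambda^*(d_1)\le\lambda^*(m)$ and $\lambda^*(d_2)\le\lambda^*(n)$ by (c), counting the $d(m)d(n)$ terms, and taking square roots yields (f). For part (g), I would start instead from the Hecke relation \eqref{H1}, $\lambda(m)\lambda(n)=\sum_{d\mid(m,n)}\lambda(mn/d^2)$, apply Cauchy--Schwarz to pull out a factor $d((m,n))^{1/2}$, and note that $d\mapsto mn/d^2$ is injective on the divisors of $(m,n)$ with image among the divisors of $mn$ (since $d\mid(m,n)$ forces $d^2\mid mn$), so that $\sum_{d\mid(m,n)}\lambda^2(mn/d^2)\le\lambda^*(mn)^2$. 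The only place that needs real care is (e): choosing Cauchy--Schwarz there, rather than the cruder triangle-inequality estimate that would cost an extra factor $d((m,n))$, is what makes $\lambda^*$ behave like a square root of the divisor function and lets (f) and (g) come out with the clean exponents stated; everything else is bookkeeping with divisor sets.
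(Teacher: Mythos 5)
Your proposal is correct and follows essentially the same route as the paper: (a)--(d) from the definition and multiplicativity, (e) from the dual Hecke relation \eqref{H2} with Cauchy--Schwarz and extension of the summation, (f) by factoring divisors of $mn$ into divisor pairs of $m$ and $n$ and invoking (e) and (c), and (g) from \eqref{H1} with Cauchy--Schwarz. The only cosmetic difference is that you make the divisor-pair injection and the injectivity of $d\mapsto mn/d^2$ explicit where the paper simply extends the summation.
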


\begin{proof}
The first three assertions are trivial since $\lambda (1)=1$ and $\lambda^2 (d)\geq 0$, for all $d$.
The part \eqref{a)} is a consequence of the fact that if $d\mid mn$, then $d$ can be uniquely written as $d=d_{1}d_{2}$ where $d_{1}$ and $d_{2}$ respectively divide $m$ and $n$.  We also use the relation $\lambda (ab)= \lambda (a) \lambda (b)$, when $a$ and $b$ are coprime.
For the part \eqref{tel}, we use \eqref{H2} to write 
$$
\vert \lambda (mn) \vert \leq \sum_{d\mid (m,n)} \bigl\vert \lambda \bigl( \frac{m}{d}\bigr)\, \lambda \bigl(\frac{n}{d}\bigr) \, \bigr\vert \leq \Bigl( \sum_{d\mid (m,n)} \lambda^2 \bigl( \frac{m}{d}\bigr)\Bigr)^\frac{1}{2}\cdot \Bigl( \sum_{d\mid (m,n)} \lambda^2 \bigl( \frac{n}{d}\bigr)
\Bigr)^\frac{1}{2},
$$
hence the result by extending summation.
In the case of \eqref{cold}, we write
  \begin{align*}
{\lambda^*}^2 (mn)&=\sum_{d\mid mn }\lambda^2
\Bigl(\frac{mn}{d}\Bigr)\\
&\leq \sum_{d_{1}\mid m}\ \sum_{d_{2}\mid n}\lambda^2 \Bigl( \frac{m}{d_{1}}\cdot \frac{n}{d_{2}}\Bigr)\\
&\leq \sum_{d_{1}\mid m}\ \sum_{d_{2}\mid n} {\lambda^*}^2 \Bigl( \frac{m}{d_{1}} \Bigr)\cdot {\lambda^*}^2 \Bigl( \frac{n}{d_{2}} \Bigr)\\
&\leq \sum_{d_{1}\mid m}\  {\lambda^*}^2 (m)\cdot  \sum_{d_{2}\mid n}{\lambda^*}^2 (n)\\
&\leq d(m) \,d(n)\,{ \lambda^*}^2(m)\,  {\lambda^*}^2 (n),
\end{align*}
by \eqref{tel} and \eqref{b)}.  
For  \eqref{f)}, we write by \eqref{H1}, the inequalities
\begin{align*}
\vert\, \lambda (m)\, \lambda (n )\, \vert&  \leq \sum_{d\mid (m,n)} \Bigr\vert \lambda\Bigl( \frac{mn}{d^2}\Bigr)\Bigr\vert\\
&\leq \Bigl( \sum_{d\mid (m,n)} 1 \Bigr)^\frac{1}{2}\cdot
\Bigl(\, \sum_{d\mid (m,n)}   \lambda^2\Bigl( \frac{mn}{d^2}\Bigr)\,\Bigr)^\frac{1}{2}\\
&\leq d^\frac{1}{2} ((m,n)) \lambda^* (mn),
\end{align*}
by Cauchy-Schwarz inequality and extending summation.
\end{proof}


\subsection{Moments of $\lambda^*(n)$}

 The divisor function $d(n)$ satisfies nice bounds if we sum its powers over an interval. Indeed, for any positive integer $A$,
we have, for $X \geq 1$,  
\begin{equation}\label{divmoment}
\sum_{n \leq X} d^A(n) \ll_{A} X(\log X)^{2^A -1}.
\end{equation}
For this classical bound  see \cite[p.61]{MoVa} for instance.
The function $\lambda^*$ also displays similar regularity and it is reasonable to expect that moments of $\lambda^*$ should be of same size as corresponding moments of $\lambda$ (up to $\log$ factors).
With a specific application in mind, we prove a particular case of this regularity.


\begin{proposition}\label{6thmoment}
Suppose a Hecke multiplicative function $\lambda$ satisfies the bound
\begin{equation}\label{assum}
 \sum_{m\leq M} {\lambda}^6 (m) \ll_{\lambda}  M (\log M)^4
\end{equation}
uniformly for all $M\geq 2$.
Then for any positive integer $A$, there is some integer $A_1= A_1 (A)$, such that, uniformly for $X\geq 2$, one has the estimate
\begin{equation}\label{4h}
\sum_{m\leq X} d^A (m) {\lambda^*}^4 (m) \ll X\, (\log X)^{A_1},
\end{equation}
where the implied constant depends only on $\lambda$ and $A$.
\end{proposition}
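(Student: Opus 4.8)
The plan is to dominate $\lambda^{*}$ by an ordinary divisor-type object, thereby reducing \eqref{4h} to the classical divisor moment bound \eqref{divmoment} and the sixth-moment hypothesis \eqref{assum}, the two being glued together by H\"older's inequality. No property of $\lambda$ beyond \eqref{assum} and the definition \eqref{defto} will be used, and in particular nothing from Lemma \ref{propto}.

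First, by \eqref{defto} and the Cauchy--Schwarz inequality one has, for every $n\geq 1$, the crude pointwise bound
\[
{\lambda^*}^{4}(n) \;=\; \Bigl(\sum_{d\mid n}\lambda^{2}(d)\Bigr)^{2} \;\leq\; d(n)\sum_{d\mid n}\lambda^{4}(d).
\]
Inserting this into the left-hand side of \eqref{4h}, interchanging the order of summation, writing $m=dk$, and using the submultiplicativity $d(dk)\leq d(d)\,d(k)$ of the divisor function (hence of its powers), I would obtain
\[
\sum_{m\leq X} d^{A}(m)\,{\lambda^*}^{4}(m) \;\leq\; \sum_{d\leq X}\lambda^{4}(d)\,d^{A+1}(d)\sum_{k\leq X/d}d^{A+1}(k).
\]
The next step is to estimate the inner sum by \eqref{divmoment} applied with exponent $A+1$, which gives $\ll_{A}(X/d)(\log 2X)^{2^{A+1}-1}$ (bounding $\log(2X/d)\leq\log 2X$), so that
\[
\sum_{m\leq X} d^{A}(m)\,{\lambda^*}^{4}(m) \;\ll_{A}\; X(\log 2X)^{2^{A+1}-1}\sum_{d\leq X}\frac{\lambda^{4}(d)\,d^{A+1}(d)}{d}.
\]

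It then remains to show that the last sum is bounded by a fixed power of $\log 2X$ depending only on $A$; by partial summation this reduces to bounding $T(Y):=\sum_{d\leq Y}\lambda^{4}(d)\,d^{A+1}(d)$. Here I would apply H\"older's inequality with exponents $\frac{3}{2}$ and $3$:
\[
T(Y) \;\leq\; \Bigl(\sum_{d\leq Y}\lambda^{6}(d)\Bigr)^{\!2/3}\Bigl(\sum_{d\leq Y}d^{3(A+1)}(d)\Bigr)^{\!1/3}.
\]
The first factor is $\ll_{\lambda}\bigl(Y(\log 2Y)^{4}\bigr)^{2/3}$ by the hypothesis \eqref{assum}, and the second is $\ll_{A}\bigl(Y(\log 2Y)^{2^{3(A+1)}-1}\bigr)^{1/3}$ by \eqref{divmoment} applied with exponent $3(A+1)$; hence $T(Y)\ll_{\lambda,A}Y(\log 2Y)^{B}$ with $B=\frac{8}{3}+\frac{1}{3}\bigl(2^{3(A+1)}-1\bigr)$. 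Partial summation then yields $\sum_{d\leq X}\lambda^{4}(d)\,d^{A+1}(d)/d\ll_{\lambda,A}(\log 2X)^{B+1}$, and feeding this into the previous display gives \eqref{4h} with any integer $A_{1}\geq 2^{A+1}+B$, after noting that $\log 2X\ll\log X$ for $X\geq 2$ (for bounded $X$ the statement is trivial).

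The single genuine point is the divisor weight $d^{A+1}(d)$ sitting next to $\lambda^{4}(d)$ in $T(Y)$: a bare fourth-moment bound for $\lambda$ would not suffice to absorb this extra factor, and it is exactly here that the sixth-moment hypothesis \eqref{assum} --- which, in the application to $\lambda=\lambda_f$, is supplied by the case $j=3$ of \eqref{moment} of Lau and L\"u --- is indispensable, entering through the H\"older splitting with exponents $\frac{3}{2}$ and $3$. Everything else is entirely routine.
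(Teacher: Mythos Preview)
Your proof is correct. Both you and the paper reduce the problem to bounding a weighted sum $\sum_{d\leq X}d^{C}(d)\lambda^{4}(d)/d$ for some $C=C(A)$, and both finish with the same H\"older splitting (exponents $\tfrac{3}{2}$ and $3$) to invoke the sixth-moment hypothesis \eqref{assum} together with the divisor moment \eqref{divmoment}.

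The genuine difference lies in the opening move. The paper expands ${\lambda^*}^{4}(m)=\bigl(\sum_{d\mid m}\lambda^{2}(d)\bigr)^{2}$ as a double sum over pairs $d_{1},d_{2}\mid m$, swaps to obtain a sum over $m$ with $[d_{1},d_{2}]\mid m$, and must then unfold the lcm via a GCD parameter $\delta$ before a further Cauchy--Schwarz in the $d_{1}$-sum brings it to the shape $\sum_{d_{1}}d^{2A+1}(d_{1})\lambda^{4}(d_{1})/d_{1}$. Your pointwise Cauchy--Schwarz bound ${\lambda^*}^{4}(m)\leq d(m)\sum_{d\mid m}\lambda^{4}(d)$ bypasses the double divisor and the lcm bookkeeping entirely, landing directly on a single-divisor sum and the target shape $\sum_{d}d^{A+1}(d)\lambda^{4}(d)/d$. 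This is a cleaner route and yields a somewhat smaller admissible exponent $A_{1}$ (the paper accumulates extra $\LL^{A_{1}}$ losses through the intermediate steps). Neither argument uses the Hecke multiplicativity of $\lambda$ or Lemma~\ref{propto}; only the definition \eqref{defto} and the hypothesis \eqref{assum} are needed, as you correctly observe.
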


\begin{proof} 

Throughout the proof we denote by $A_1$ some unspecified but effective function of $A$. The value of $A_1$ may be different in different occurrences.  By the definition \eqref{defto} of the function $\lambda^*$, one has the equality
\begin{align}
\sum_{m\leq X}d^A (m)\,{\lambda^*}^4 (m)&=\sum_{m\leq X}\ d^A (m)\,\Bigl( \,\sum_{d\mid m}\,\lambda^2 (d)\,\Bigr)^2\nonumber\\
&=\underset{d_1,\ d_2}{\sum\ \sum } \lambda^2 (d_1) \lambda^2(d_2)\, \sum_{\substack{m\leq X\\ [d_1,d_2] \mid m} }d^A (m),\label{green}
\end{align}
where $[d_{1},d_{2}]$ is the least common multiple of $d_{1}$ and $d_{2}$.  Using the inequality
\begin{equation}\label{sun0}  
d(ab)\leq d(a) d(b),
\end{equation}
and \eqref{divmoment}, we transform \eqref{green} into
$$
\sum_{m\leq X}d^A (m)\,{\lambda^*}^4 (m)\ll \LL^{A_1}
\underset{d_1,\ d_2}{\sum\ \sum } d^A (d_1)\,d^A (d_2)\, \lambda^2 (d_1) \lambda^2(d_2)\, \frac{X}{[d_1,d_2]},
$$ 
where $\LL$ has now the meaning $$\LL := \log 2X.$$
Since $[d_{1}, d_{2}]=d_{1}d_{2}(d_{1},d_{2})^{-1} $ we extend    the summation over all the  divisors $\delta$ 
of $d_{1}$ and $d_{2}$,  to obtain  the series of inequalities
\begin{align*}
\sum_{m\leq X}d^A (m)\, {\lambda^*}^4 (m)&\ll  X\,\LL^{A_1} \,\sum_{\delta \leq  X}\delta\,  \Bigl( \sum_{\delta \mid d_{1}\leq X} d^A (d_{1})\frac{\lambda^2 (d_{1})}{d_{1}}\Bigr)^2\\
&\ll X\,\LL^{A_1}\,\sum_{\delta \leq X} \delta  \Bigl( \sum_{\delta \mid d_{1}\leq X} \frac{\lambda^4 (d_{1})}{d_{1}}\Bigr) \cdot \Bigl( \sum_{\delta \mid d_{1}\leq X} \frac{d^{2A}(d_{1})}{d_{1}}\Bigr)\\
&\leq X \, \LL^{A_1}\,\sum_{\delta \leq X}  \ d^{2A}(\delta )\,  \Bigl( \sum_{\delta \mid d_{1}\leq X} \frac{\lambda^4 (d_{1})}{d_{1}}\Bigr)\\
&\leq  X\, \LL^{A_1}\,   \Bigl( \sum_{  d_{1}\leq X} d^{2A+1}(d_{1})\, \frac{\lambda^4 (d_{1})}{d_{1}}\Bigr)\\
&\leq X\, \LL^{A_1}\,  \Bigl( \sum_{ d_{1}\leq X}   \frac{\lambda^6 (d_{1})}{d_{1}}\Bigr)^\frac{2}{3}\  \Bigl( \sum_{ d_{1}\leq X}   \frac{d^{6A+3} (d_{1})}{d_{1}}\Bigr)^\frac{1}{3}\\
&\ll X \LL^{A_1}, 
\end{align*}
where we used the Cauchy-Schwarz inequality, the inequalities \eqref{sun0} and \eqref{divmoment}, H\" older's inequality, and finally the assumption
\eqref{assum} combined with Abel summation.
\end{proof}


\section{Additive twists and Miller's theorem}

\subsection{ \textbf{GL(2)}}

 For later applications in the estimation of Type I sums  we prove the following:
\begin{lemma}\label{GL2-N} 
 Let $f$  be a cusp form  on ${\rm SL}(2,\Z)$.  Then uniformly  for $N$ integer $\geq 1$, for  $X \geq 1$ and  for $\alpha  \in \R$  one has the inequality 
$$
\sum_{n\leq X}\lambda_f (Nn)e (\alpha n) \ll_f  \sqrt{X} \log (2X)\, d (N)^\frac{1}{2}\, \lambda_{f }^* (N).
$$
 \end{lemma}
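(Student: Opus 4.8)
The plan is to reduce the sum over $\lambda_f(Nn)$, which involves a genuinely $\mathrm{GL}(2)$ quantity, to a sum of shifted additive twists of the Hecke eigenvalues themselves, so that the uniform bound \eqref{sum} from the introduction can be applied. The key algebraic input is Lemma \ref{Hec1} (equivalently \eqref{H2}). Writing $N$ and the summation variable multiplicatively, one expands $\lambda_f(Nn)$ by \eqref{H2}: for each $d \mid (N,n)$ one gets a term $\mu(d)\lambda_f(N/d)\lambda_f(n/d)$. Since we only need an upper bound, we can afford to be wasteful: first split off the divisor $d$ of $N$, then write $\lambda_f(N/d)\lambda_f(n/d)$ using Lemma \ref{Hec1} as $\sum_{e \mid (N/d, n/d)} \lambda_f\bigl(\tfrac{(N/d)(n/d)}{e^2}\bigr)$, and so on — but a cleaner route is to just use \eqref{H2} once and then bound $|\lambda_f(N/d)|$ trivially, leaving a sum of the form $\sum_{d \mid N}|\lambda_f(N/d)| \bigl| \sum_{m \le X/d} \lambda_f(m) e(\alpha d m) \bigr|$ after the substitution $n = dm$.

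More precisely, first I would record that by \eqref{H2},
$$
\lambda_f(Nn) = \sum_{d \mid (N,n)} \mu(d)\, \lambda_f\!\Bigl(\frac{N}{d}\Bigr)\lambda_f\!\Bigl(\frac{n}{d}\Bigr),
$$
so that substituting $n = dm$ (the condition $d \mid n$) gives
$$
\sum_{n \le X} \lambda_f(Nn) e(\alpha n) = \sum_{d \mid N} \mu(d)\, \lambda_f\!\Bigl(\frac{N}{d}\Bigr) \sum_{m \le X/d} \lambda_f(m)\, e(\alpha d m).
$$
Now apply \eqref{sum} to the inner sum, with the real number $\alpha d$ in place of $\alpha$ — this is exactly where the uniformity of the implied constant in \eqref{sum} in the additive parameter is essential. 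This yields the inner sum is $\ll_f \sqrt{X/d}\,\log(2X) \le \sqrt{X}\,\log(2X)$. Hence the whole expression is
$$
\ll_f \sqrt{X}\,\log(2X) \sum_{d \mid N} \Bigl|\lambda_f\!\Bigl(\frac{N}{d}\Bigr)\Bigr|.
$$
It remains to estimate $\sum_{d \mid N}|\lambda_f(N/d)| = \sum_{e \mid N}|\lambda_f(e)|$ by $d(N)^{1/2}\lambda_f^*(N)$. This is immediate from Cauchy–Schwarz: $\sum_{e \mid N}|\lambda_f(e)| \le \bigl(\sum_{e \mid N} 1\bigr)^{1/2}\bigl(\sum_{e \mid N}\lambda_f^2(e)\bigr)^{1/2} = d(N)^{1/2}\,\lambda_f^*(N)$ by the definition \eqref{defto}.

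I expect no serious obstacle here; the only point requiring a little care is making sure the statement \eqref{sum} is invoked correctly — it is stated for $\nu_f(n)$ over $|n| \le N$, and $\nu_f(m) = \nu_f(1)\lambda_f(m)$ for a level-one form with $\nu_f(1) \ne 0$, so \eqref{sum} is equivalent to $\sum_{m \le M}\lambda_f(m) e(\beta m) \ll_f \sqrt{M}\log(2M)$ uniformly in $\beta \in \R$, which is exactly what the argument uses. (Alternatively one can quote \cite[Theorem 8.1]{Iw2} directly in the $\lambda_f$ normalization.) The divisor-function gymnastics are all routine consequences of Lemma \ref{propto}, part \eqref{one}, and standard multiplicativity, so the proof is short.
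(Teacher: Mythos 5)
Your proposal is correct and follows essentially the same route as the paper: expand $\lambda_f(Nn)$ via the dual Hecke relation \eqref{H2}, apply the uniform bound \eqref{sum} to the inner sum $\sum_{m\le X/d}\lambda_f(m)e(\alpha dm)$, and finish the divisor sum with Cauchy--Schwarz and the definition \eqref{defto}. The only (harmless) difference is that the paper retains the factor $d^{-1/2}$ from bounding the inner sum by $\sqrt{X/d}\log 2X$ before applying Cauchy--Schwarz, whereas you discard it; the stated bound follows either way.
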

\begin{proof}
We use \eqref{H2} and \eqref{sum} to write
\begin{align*}
\sum_{n\leq X}\lambda_f (Nn)e (\alpha n)&= \sum_{d\mid N} \mu (d) \lambda_f (N/d)\sum_{k\leq X/d}
\lambda_f (k)  e (\alpha dk)\\
&\ll  \sqrt{X} (\log 2X)\,\sum_{d\mid N} \mu^2 (d)\, \bigl\vert \lambda_f (N/d)\bigr\vert d^{-\frac{1}{2}}.
\end{align*}
   It remains to apply
the Cauchy--Schwarz  inequality and to refer to the definition \eqref{defto} to conclude the proof.
\end{proof}


\subsection{ \textbf{GL(3)}}
 
We recall the main theorem in \cite{Mi} already mentioned in \eqref{Mill100} above. Miller's theorem depends 
crucially on the Voronoi summation formula for ${\rm GL}(3)$ which was first established by Miller and Schmidt \cite{MS1}  (see also \cite{GL1} for a different treatment). A concrete introduction to the theory of higher degree automorphic forms 
is the book \cite{GL}.

\begin{theorema}\label{MILLL}
Let $a_{r,n}$ denote the Fourier coefficients of a cusp form $f$  on \break  ${\rm GL}(3,\Z)\backslash {\rm GL}(3,\R)$. Then for every $\eps >0$, for every integer $r$, and for every $T\geq 1$, one has the inequality 
$$
\sum_{n\leq T} a_{r,n}  e(n\alpha) \ll_{f,r,\eps} T^{\frac{3}{4}+\eps},
$$
 where the implied constant depends only on the form $f$, $r$, and $\eps$.
\end{theorema}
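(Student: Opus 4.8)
This is \cite[Theorem 1.1]{Mi}; since it is invoked here only as a black box, I will merely outline the strategy. The plan, following Miller, is to exploit the ${\rm GL}(3)$ Voronoi summation formula of Miller and Schmidt \cite{MS1} (see \cite{GL1} for an alternative derivation and \cite{GL} for background): for $(a,c)=1$ and a smooth function $\psi$ compactly supported on $(0,\infty)$, it transforms the twisted sum $\sum_n a_{r,n}\,e(an/c)\,\psi(n)$ into a \emph{dual} sum, running over $m\geq 1$ and divisors $d$ of $cr$, of the coefficients $a_{m,d}$ weighted by Kloosterman sums to moduli $cr/d$ and by an explicit integral transform $\Psi_{\pm}$ of $\psi$ evaluated at $md^2/(c^3r)$; the kernel of $\Psi_{\pm}$ is a combination of Bessel functions --- equivalently a Mellin--Barnes integral of a ratio of Gamma factors --- oscillating with the cube-root phase characteristic of ${\rm GL}(3)$.

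I would first reduce, by a smooth dyadic decomposition together with a standard treatment of the sharp cut-off near $n=T$ (the edge contribution being controlled by Cauchy--Schwarz and the Rankin--Selberg mean-value $\sum_{n\le N}|a_{r,n}|^2\ll_{f,r}N^{1+\eps}$), to bounding $\sum_n a_{r,n}\,e(n\alpha)\,V(n/T)$ for a fixed bump $V$ supported on $[1,2]$. By Dirichlet's theorem, write $\alpha=a/c+\beta$ with $(a,c)=1$, $1\le c\le Q$ and $|\beta|\le 1/(cQ)$, where $Q$ is a free parameter. Applying the Voronoi formula with $\psi(n)=e(n\beta)\,V(n/T)$ --- so that the perturbation $\beta$ is absorbed into the test function and reappears, after a stationary-phase analysis, inside $\Psi_{\pm}$ --- one finds that the dual sum has effective length $M\asymp c^3(1+|\beta|T)^{O(1)}/T$, with each $\Psi_{\pm}$ of size $\asymp T$ up to powers of $1+|\beta|T$. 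Bounding the dual sum by Cauchy--Schwarz in $m$, the Weil bound for the Kloosterman sums, the divisor bound for the sum over $d$, and the Rankin--Selberg estimate above then yields a genuine saving over the trivial bound $T^{1+\eps}$ when $c$ is not too large, while for $c$ close to $Q$ the length $M$ stays under control; choosing $Q$ to balance these two regimes produces the exponent $3/4$, the $T^{\eps}$ absorbing the divisor sums, the powers of $1+|\beta|T$, and the $\eps$'s in the Weil and Rankin--Selberg bounds.

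The technical heart --- and the reason this result lies far deeper than its ${\rm GL}(2)$ counterpart \eqref{sum}, which by comparison is essentially elementary --- is the analysis of the kernel $\Psi_{\pm}$. Unlike the single classical Bessel function occurring in the ${\rm GL}(2)$ Voronoi formula, this kernel is a genuinely higher-rank oscillatory integral (a Meijer $G$-function), and one must pin down its size, its effective support, and the precise shape of its oscillation --- together with how that oscillation interacts with the extra factor $e(n\beta)$ --- all uniformly in the parameters $c$, $\beta$, $m$, $d$ and $r$. Establishing the ${\rm GL}(3)$ Voronoi formula itself with the required uniformity is the substance of \cite{MS1}; granting it, the remainder of the argument is classical analytic number theory in the spirit of additive twists of ${\rm GL}(2)$ cusp forms.
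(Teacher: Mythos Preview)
The paper does not prove this statement at all: Theorem~\ref{MILLL} is quoted verbatim from \cite[Theorem~1.1]{Mi} and used as a black box, exactly as you say in your opening sentence. So there is no ``paper's own proof'' to compare against; your sketch of Miller's argument via the ${\rm GL}(3)$ Voronoi formula of \cite{MS1} is a reasonable summary of the strategy and goes well beyond what the present paper requires.
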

 
Applying this theorem to the symmetric square lift of a Hecke-Maass cusp form $f$ of level one and noting that 
we can write the coefficients of $L(s, {\rm sym}^2 f)$ as convolutions from the expression 
$$
 L(s, {\rm sym}^2 f)=\zeta (2s) \sum_{n=1}^\infty \frac{\lambda_f (n^2)}{n^s},
 $$
we obtain the following corollary. See \cite[p. 434--435]{MSsurvey} for details. 

\begin{corollary}\label{165}
For every  Hecke-Maass cusp form $f$ of level one and for every $\varepsilon >0$, there exists a function $C(f,\varepsilon)$ such that, for every  $T\geq 1$  one has the inequality
$$
\Bigl\vert \sum_{n\leq T} \Bigl( \sum_{n=md^2}\lambda_f (m^2)\Bigr)e (n \alpha)\Bigr\vert \leq C(f,\varepsilon) T^{\frac{3}{4} +\eps}.
$$
\end{corollary}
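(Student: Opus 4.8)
The plan is to derive Corollary \ref{165} directly from Theorem \ref{MILLL} by identifying the inner sum as a partial sum of Fourier coefficients of a fixed ${\rm GL}(3)$ cusp form, namely the symmetric square lift of $f$. First I would recall (as asserted in the excerpt, citing Gelbart--Jacquet \cite{GJ}) that for a Hecke-Maass cusp form $f$ of level one, the symmetric square lift ${\rm sym}^2 f$ is a cuspidal automorphic representation of ${\rm GL}(3,\mathbb{A}_\Q)$; since $f$ has level one, ${\rm sym}^2 f$ is a cusp form for ${\rm GL}(3,\Z)\backslash {\rm GL}(3,\R)$ in the sense of Theorem \ref{MILLL}.

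Next I would make the identification of coefficients precise. The standard $L$-function of ${\rm sym}^2 f$ has the Dirichlet series expansion with $n$-th coefficient $\sum_{md^2 = n}\lambda_f(m^2)$, which is exactly read off from the stated factorization $L(s,{\rm sym}^2 f)=\zeta(2s)\sum_{m\geq 1}\lambda_f(m^2)m^{-s}$ upon expanding $\zeta(2s)=\sum_d d^{-2s}$ and collecting terms with $md^2=n$. Write $A(n) := \sum_{md^2=n}\lambda_f(m^2)$. These $A(n)$ are, up to the usual normalization convention, precisely the Fourier-Whittaker coefficients $a_{1,n}$ of the ${\rm GL}(3)$ cusp form ${\rm sym}^2 f$ (the coefficients indexed with $r=1$); here one should be slightly careful about which normalization of Fourier coefficients Miller uses versus the $L$-function coefficients, but the two differ at most by the harmless arithmetic factor already accounted for in Miller's statement (this is why \eqref{Mill100} is quoted with a parameter $r$), and the cited pages \cite[p.~434--435]{MSsurvey} supply exactly this bookkeeping.

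With the identification $A(n) = a_{1,n}$ in hand, Corollary \ref{165} is immediate: apply Theorem \ref{MILLL} with this particular ${\rm GL}(3)$ cusp form and $r=1$ to get
$$
\Bigl\vert \sum_{n\leq T} A(n)\, e(n\alpha)\Bigr\vert = \Bigl\vert \sum_{n\leq T} a_{1,n}\, e(n\alpha)\Bigr\vert \ll_{f,\eps} T^{3/4+\eps},
$$
uniformly in $\alpha \in \R$ and $T \geq 1$. Renaming the implied constant $C(f,\eps)$ gives the claimed inequality, since the dependence on $r=1$ is now absorbed into the dependence on $f$.

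I expect the only genuine subtlety, rather than an obstacle, to be the normalization matching between the Dirichlet series coefficients of $L(s,{\rm sym}^2 f)$ and Miller's Fourier coefficients $a_{r,n}$ for ${\rm GL}(3)$ forms — i.e.\ verifying that the inner sum $\sum_{n=md^2}\lambda_f(m^2)$ really is of the form $a_{r,n}$ for a fixed $r$ (here $r=1$) rather than something more complicated. This is precisely the point deferred to \cite[p.~434--435]{MSsurvey}, and it is routine: it amounts to writing out the Hecke relations for the symmetric square and comparing with the standard description of ${\rm GL}(3)$ Hecke-Whittaker coefficients. Everything else — cuspidality of ${\rm sym}^2 f$ at level one, uniformity in $\alpha$, the shape of the error term — is inherited verbatim from Theorem \ref{MILLL}.
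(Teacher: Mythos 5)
Your proposal is correct and follows essentially the same route as the paper: apply Theorem \ref{MILLL} to the ${\rm GL}(3)$ cusp form ${\rm sym}^2 f$ (cuspidal by Gelbart--Jacquet since $f$ has level one) and identify the convolution $\sum_{md^2=n}\lambda_f(m^2)$ with the Fourier coefficient $a_{1,n}$ via the factorization $L(s,{\rm sym}^2 f)=\zeta(2s)\sum_m \lambda_f(m^2)m^{-s}$, deferring the normalization bookkeeping to \cite[p.~434--435]{MSsurvey} exactly as the paper does.
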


\subsection{Application of Miller's theorem} We have 

\begin{lemma}\label{Miller}

For every a Hecke-Maass cusp form $f$ of level one and for every postive $\eps$, we have
$$
\sum_{n\leq T} \lambda_f (n^2)e (n\alpha) \ll_{\eps,f} T^{\frac{3}{4} +\eps},
$$
uniformly for $T\geq 1$.
\end{lemma}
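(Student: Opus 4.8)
The plan is to deduce Lemma \ref{Miller} from Corollary \ref{165} by Möbius inversion, removing the square-divisor convolution that appears there. Write $b(n) := \sum_{n = m d^2} \lambda_f(m^2)$ for the coefficients handled by Corollary \ref{165}; the goal is to recover $\sum_{n \le T} \lambda_f(n^2) e(n\alpha)$ from $\sum_{n \le T} b(n) e(n\alpha)$. Since $b = \mathbf{1}_{\square} * (\lambda_f(\cdot^2))$ in the Dirichlet-convolution sense, where $\mathbf{1}_\square$ is the indicator of perfect squares, one inverts to get $\lambda_f(n^2) = \sum_{n = m d^2} \beta(d)\, b(m)$ for a suitable multiplicative coefficient $\beta$ supported on squares; in fact $\sum_d \beta(d^2) d^{-s} = \zeta(2s)^{-1}$, so $\beta(d^2) = \mu(d)$ and $\beta$ is otherwise zero. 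Concretely, $\lambda_f(n^2) = \sum_{d^2 \mid n} \mu(d)\, b(n/d^2)$.

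First I would substitute this identity and interchange the order of summation:
\begin{equation*}
\sum_{n \le T} \lambda_f(n^2) e(n\alpha) = \sum_{d \le \sqrt{T}} \mu(d) \sum_{m \le T/d^2} b(m)\, e(m d^2 \alpha).
\end{equation*}
The inner sum is exactly of the shape controlled by Corollary \ref{165}, with $\alpha$ replaced by $d^2 \alpha$ (the corollary is uniform in the real parameter, since Miller's Theorem \ref{MILLL} is uniform in $\alpha$) and $T$ replaced by $T/d^2$. Applying it gives the bound
\begin{equation*}
\Bigl\vert \sum_{m \le T/d^2} b(m)\, e(m d^2 \alpha) \Bigr\vert \le C(f,\eps)\, (T/d^2)^{3/4 + \eps}.
\end{equation*}

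Then I would sum over $d \le \sqrt{T}$. Since $\sum_{d \ge 1} d^{-3/2 - 2\eps}$ converges, the total is $\ll_{f,\eps} T^{3/4 + \eps}$, which is the claimed bound (after relabelling $\eps$). A minor point to check is that $\beta$ is indeed supported on squares with value $\mu(\sqrt{d})$, which follows from the Euler product identity $L(s, \mathrm{sym}^2 f) = \zeta(2s) \sum_n \lambda_f(n^2) n^{-s}$ quoted just before Corollary \ref{165}, rearranged as $\sum_n \lambda_f(n^2) n^{-s} = \zeta(2s)^{-1} L(s, \mathrm{sym}^2 f)$ and matched termwise against the convolution $b$. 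I do not expect any real obstacle here: the only thing to be slightly careful about is the uniformity in the additive parameter when replacing $\alpha$ by $d^2\alpha$, but this is automatic from the statement of Theorem \ref{MILLL}, and the convergence of the $d$-sum is immediate because the exponent $3/4$ is comfortably less than $1$ (indeed less than the $1$ that a trivial bound would need). The routine verification of the inversion coefficients is the only computational step, and it is elementary.
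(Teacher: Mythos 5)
Your proposal is correct and follows essentially the same route as the paper: the paper proves the identity $S(T,\alpha)=\sum_{r\leq\sqrt{T}}\mu(r)\,M\bigl(T/r^{2},r^{2}\alpha\bigr)$ (its equation \eqref{193}), which is exactly your Möbius inversion $\lambda_f(n^2)=\sum_{d^2\mid n}\mu(d)\,b(n/d^2)$, and then applies Corollary \ref{165} with the twisted argument and sums over the convergent series in $d$. The only difference is cosmetic — you justify the inversion via the Dirichlet series $\zeta(2s)^{-1}$, while the paper does it by a direct change of variables — so there is nothing to add.
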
 
\begin{proof}
Let 
$$
 M(T,\alpha):=\sum_{n\leq T} \Bigl( \sum_{n=md^2}\lambda_f (m^2)\Bigr)e (n \alpha),$$ 
and let 
$$
S(T, \alpha):= \sum_{n\leq T} \lambda_f (n^2)e (n\alpha).
$$
We claim the equality
\begin{equation}\label{193}
S(T,\alpha) = \sum_{r\leq \sqrt T} \mu (r) M\Bigl( \frac{T}{r^2}, r^2 \alpha \Bigr),
\end{equation}
and Lemma \ref{Miller} directly follows from Proposition \ref{165} after a summation over $r$. To prove \eqref{193}, we write 
\begin{align*}
S(T, \alpha)&= \sum_{m\leq \sqrt T} \Bigl( \sum_{r\mid m}\mu (r) \Bigr) S\Bigl( \frac{T}{m^2}, m^2 \alpha\Bigr)\\
&=\sum_{r\leq \sqrt T} \mu (r)\sum_{\ell \leq \sqrt T/r} \ \sum_{k\leq T/(\ell^2 r^2)}\lambda_f (k^2) e(kr^2\ell^2 \alpha).
\end{align*}
The proof now follows by making the change of variables $n=k\ell^2$.
\end{proof}

Let us denote, for positive integer  $A$,
$$
S(T,A, \alpha) :=  \sum_{n\leq T} \lambda_f (A n^2)e (n\alpha).
$$
By \eqref{H2} and the observation that for a squarefree $\ell$, $\ell \mid n^2$ if and only if $ \ell \mid n,$ we have
\begin{equation}\label{M1}
S(T,A,\alpha)=\sum_{\ell \mid A}\,\mu (\ell)\,\lambda_f(A/\ell)\,S(T/\ell , \ell, \ell \alpha),
\end{equation}
for any integer $A$.
Now we prove a key lemma.

\begin{lemma}\label{key} Let $f$ be a Hecke-Maass cusp form  of level one and let $\eps$ be any positive real number.  Then we have
the bound
\begin{equation}\label{keyeq}
S(T,A,\alpha)   \ll_{\eps,f} (1+\omega(A))\,d_3(A)\,|\lambda_f(A)|\,T^{\frac{3}{4}+\eps},
\end{equation}
uniformly  for $T\geq 1$,  for  squarefree $A \geq 1$ and for  real $\alpha$.
 
\end{lemma}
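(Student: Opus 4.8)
The plan is to reduce $S(T,A,\alpha)$ to a short combination of Miller‑type sums $\sum_{m\le Y}\lambda_f(m^2)e(m\beta)$, each of which is controlled by Lemma~\ref{Miller} (whose bound is uniform in the additive parameter $\beta$). The bridge is an exact identity expressing $\lambda_f(An^2)$, for squarefree $A$, as $\lambda_f(A)$ times a de‑convolution of $m\mapsto\lambda_f(m^2)$ over the part of $n$ built from primes dividing $A$.

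First I would record a one‑prime formula: from Lemma~\ref{Hec1} with $m=p$ and $n=p^a$ one gets $\lambda_f(p^{a+1})=\lambda_f(p)\lambda_f(p^a)-\lambda_f(p^{a-1})$ for $a\ge1$, and a trivial induction on $a$ then yields
$$
\lambda_f(p^{\,2a+1})=\lambda_f(p)\sum_{j=0}^{a}(-1)^{a-j}\lambda_f(p^{2j}).
$$
Now write $A=\prod_{p\mid A}p$ and split $n=uv$, where $u$ is the largest divisor of $n$ composed only of primes dividing $A$ and $(v,A)=1$. Since $An^2=(Au^2)v^2$ is a coprime factorisation and $Au^2=\prod_{p\mid A}p^{\,1+2v_p(n)}$, multiplicativity of $\lambda_f$ together with the one‑prime formula gives
$$
\lambda_f(An^2)=\lambda_f(A)\,\lambda_f(v^2)\sum_{d\mid u}(-1)^{\Omega(u/d)}\lambda_f(d^2),
$$
where $\Omega$ counts prime factors with multiplicity. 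Because $d\mid u$ is composed of primes dividing $A$ and hence is coprime to $v$, one may replace $\lambda_f(d^2)\lambda_f(v^2)$ by $\lambda_f((dv)^2)$; the substitution $m=dv$, $r=u/d$ is a bijection onto the pairs with $n=mr$ and $r$ supported on the primes of $A$, and it produces the clean identity
$$
\lambda_f(An^2)=\lambda_f(A)\sum_{\substack{n=mr\\ p\mid r\,\Rightarrow\, p\mid A}}(-1)^{\Omega(r)}\,\lambda_f(m^2).
$$
(The same identity can be obtained by iterating \eqref{M1} to completion, but the direct route above is shorter.)

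It then remains to sum over $n\le T$ and invert the order of summation, obtaining
$$
S(T,A,\alpha)=\lambda_f(A)\sum_{\substack{r\ge1\\ p\mid r\,\Rightarrow\, p\mid A}}(-1)^{\Omega(r)}\sum_{m\le T/r}\lambda_f(m^2)e(mr\alpha).
$$
By Lemma~\ref{Miller} the inner sum is $\ll_{\eps,f}(T/r)^{3/4+\eps}$ uniformly in the additive parameter $r\alpha$, and it is empty unless $r\le T$; hence
$$
|S(T,A,\alpha)|\ll_{\eps,f}|\lambda_f(A)|\,T^{3/4+\eps}\sum_{\substack{r\ge1\\ p\mid r\,\Rightarrow\, p\mid A}}r^{-3/4-\eps}=|\lambda_f(A)|\,T^{3/4+\eps}\prod_{p\mid A}\bigl(1-p^{-3/4-\eps}\bigr)^{-1}.
$$
Each Euler factor is at most $(1-2^{-3/4})^{-1}<3$, so the product is $\le 3^{\omega(A)}=d_3(A)$ for squarefree $A$, which is $\le(1+\omega(A))d_3(A)$; this gives the claimed bound (indeed a slightly stronger one).

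The only real content is the exact identity; once it is in place the estimate is routine. The points to get right are (i) that everything is phrased in terms of $\lambda_f$, for which multiplicativity and the Hecke recursion of Lemma~\ref{Hec1} are available, and (ii) the bookkeeping of the ``$A$‑part versus coprime‑to‑$A$‑part'' decomposition of $n$, in particular checking that $m=dv$, $r=u/d$ really is a bijection onto $\{(m,r):mr=n,\ p\mid r\Rightarrow p\mid A\}$. I expect no genuine obstacle beyond this; the factor $1+\omega(A)$ appearing in the statement is not needed for the argument above and is presumably retained only for compatibility with a derivation based directly on \eqref{M1}.
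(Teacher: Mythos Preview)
Your argument is correct and actually yields the slightly stronger bound
\[
S(T,A,\alpha)\ll_{\eps,f} d_3(A)\,|\lambda_f(A)|\,T^{3/4+\eps},
\]
so the factor $1+\omega(A)$ is indeed superfluous. The identity you derive is sound: the one-prime telescoping formula follows from the Hecke recursion, the multiplicative extension over the primes of $A$ is legitimate because $A$ is squarefree, and the change of variables $(d,u/d)\leftrightarrow(m,r)$ is the bijection you describe (the point being that $(v,r)=1$ forces $v\mid m$). The Euler product bound $(1-p^{-3/4})^{-1}<3$ is fine, and Lemma~\ref{Miller} is uniform in the additive twist.

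The paper proceeds differently: it uses \eqref{M1} once and runs an induction on $T$, applying the inductive hypothesis to $S(T/\ell,\ell,\ell\alpha)$ for each proper divisor $\ell>1$ of $A$. That scheme forces one to track how the implied constant propagates through the recursion, and the extra factor $1+\omega(A)$ is exactly what is needed to close the induction (via $1+\omega(\ell)\le\omega(A)$ for $1<\ell<A$). Your approach unfolds the recursion completely into a single exact identity and then applies Miller's bound term by term; this sidesteps the inductive bookkeeping, gives a marginally sharper result, and is arguably cleaner. The paper's route has the minor advantage of staying closer to \eqref{M1}, which is reused verbatim in the next lemma for non-squarefree $A$.
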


\begin{proof}

We shall prove this Lemma for every squarefree $A$ by induction on $T$,
with the same implicit constant as the one contained in the statement of Lemma 6.2.
For $T_0\leq 1$, formula (73) is correct for any $A$. Similarly, (73) is correct for any
$T$ when $A=1$,
with the same constant as in Lemma 6.2.
Suppose now that there exists $T_0 \geq 1$, such that (73) is true for any $T\leq T_0$
and any $A$
squarefree. We now prove that the same holds for any $T\leq 2T_0$.

We start with the relation (\ref{M1}). The first term corresponding to $\ell=1$ is $\lambda_f(A) S(T, 1, \alpha)$ and $S(T, 1, \alpha) = O_\eps 
(T^{\frac{3}{4}+\eps})$ by Lemma \ref{Miller}. For $\ell >1$, we use the induction hypothesis. Since $A$ is squarefree, for $\ell \mid A$  we 
have $(\ell, A/\ell) =1$ (hence $\lambda_f (A) =\lambda_f(A/\ell) \lambda_f (\ell)$)  and also  $1+\omega(\ell) \leq \omega(A)$ for $\ell \neq 
A$. Thus we have,
\begin{align*}
S(T, A, \alpha) &\ll  \bigl\vert  \lambda_f(A)\bigr\vert \,T^{\frac{3}{4}+\eps}\Biggl\{1+\omega(A)\sum_{\substack{\ell \mid A\\1 <\ell <A}}|\mu(\ell)|\frac{d_3(\ell)}{\ell^{\frac{3}{4}+\eps}}\\
& \qquad\qquad\qquad\qquad\qquad\qquad\quad+\frac{d_3(A)(1+\omega(A))}{A^{\frac{3}{4}+\eps}} \Biggr\}\\
&\leq \bigl\vert \lambda_f(A)\bigr\vert \,T^{\frac{3}{4}+\eps}\Biggl\{1+\omega(A)\prod_{p\mid A} (1+\frac{3}{p^{\frac{3}{4}+\eps}})+\frac{d_3(A)}{A^{\frac{3}{4}+\eps}}\Biggr\}.
\end{align*}
Now we note that 
$$
\prod_{p\mid A} (1+\frac{3}{p^{\frac{3}{4}}}) < d_3(A),
$$
as $3/p^{\frac{3}{4}} <2$ for all primes $p$. Since we also have $1+d_3 (A)/A^{3/4} < d_3(A)$ for all $A \geq 2$, we deduce 
\begin{align*}
S(T, A, \alpha) &\ll \lambda_f(A)T^{\frac{3}{4}+\eps}\left\{1+\omega(A)d_3(A)+\frac{d_3(A)}{A^{\frac{3}{4}}}\right\}\\
& \ll  (1+\omega(A))\,d_3(A)\,\lambda_f(A)\,T^{\frac{3}{4}+\eps}.
\end{align*}

\end{proof} 

Now we generalize this to all integers $A$, squarefree or not, by  using the function $\lambda_f^*$ defined in \eqref{defto}.

\begin{lemma}\label{Millgen}
Let $f$ be a Hecke-Maass cusp form of level one  and let $\eps >0$ be any real number. Then we have
the inequality
$$
S(T,A,\alpha) \ll_{ \varepsilon,f} \bigl(1+\omega (A)\bigr)\, d_{3}(A) \, {\lambda_f ^*}^2(A) \, T^{\frac{3}{4} +\varepsilon},
$$
uniformly  for $T\geq 1$, for $A\geq 1$ and for real $\alpha$.
\end{lemma}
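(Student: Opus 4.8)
The plan is to reduce the general case to the squarefree case of Lemma~\ref{key} by factoring $A$ into its squarefree and ``powerful'' parts, and then to invoke the relation \eqref{M1} once more to peel off a squarefree divisor. Write $A = B C$, where $B$ is the largest squarefree divisor of $A$ such that $(B, A/B) = 1$; equivalently, $B = \prod_{p \| A} p$ is the product of the primes dividing $A$ exactly once, and $C = A/B$ collects all prime powers $p^a$ with $a \geq 2$. Then $(B,C) = 1$, so by \eqref{H2} applied to $S(T, A, \alpha) = S(T, BC, \alpha)$, or rather by the multiplicativity structure, I would first use \eqref{M1} with $\ell$ ranging over divisors of $B$ to write
$$
S(T, A, \alpha) = \sum_{\ell \mid B} \mu(\ell)\, \lambda_f(A/\ell)\, S(T/\ell, \ell C, \ell\alpha).
$$
Since $B$ is squarefree and $(B,C)=1$, for $\ell \mid B$ the integer $\ell C$ has $\ell$ as a coprime squarefree factor times $C$; but $C$ is ``essentially determined'' and one expects $S$ at argument $\ell C$ to behave like $|\lambda_f(\ell C)|$ times the squarefree bound. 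The cleanest route is: apply \eqref{M1} to strip all of $C$'s structure down, writing $S(T,A,\alpha)$ as a sum over $\ell \mid A$ with $\ell$ squarefree (so $\ell \mid B$) of $\mu(\ell)\lambda_f(A/\ell) S(T/\ell, \ell, \ell\alpha)$ only when the second argument is squarefree — but the second argument in \eqref{M1} is $\ell$, which is always squarefree! So in fact \eqref{M1} already expresses $S(T,A,\alpha)$ entirely in terms of $S(T/\ell, \ell, \ell\alpha)$ with $\ell$ squarefree, for \emph{any} $A$.

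So the actual plan is much more direct. Starting from \eqref{M1},
$$
S(T, A, \alpha) = \sum_{\ell \mid A} \mu(\ell)\, \lambda_f(A/\ell)\, S(T/\ell, \ell, \ell\alpha),
$$
the sum is over squarefree $\ell$ only (because of $\mu(\ell)$), and each $S(T/\ell, \ell, \ell\alpha)$ is covered by Lemma~\ref{key}. I would therefore bound
$$
|S(T, A, \alpha)| \ll_{\varepsilon, f} \sum_{\substack{\ell \mid A \\ \ell \text{ squarefree}}} |\lambda_f(A/\ell)|\, (1 + \omega(\ell))\, d_3(\ell)\, |\lambda_f(\ell)|\, (T/\ell)^{3/4 + \varepsilon}.
$$
Now $1 + \omega(\ell) \leq 1 + \omega(A)$, $d_3(\ell) \leq d_3(A)$ since $\ell \mid A$, and $(T/\ell)^{3/4+\varepsilon} \leq T^{3/4+\varepsilon}$. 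The remaining sum is $\sum_{\ell \mid A, \text{ sqfree}} |\lambda_f(A/\ell) \lambda_f(\ell)|$, which by part~\eqref{f)} of Lemma~\ref{propto} (or more simply parts \eqref{23)} and \eqref{b)}, noting $\lambda_f(A/\ell)\lambda_f(\ell)$ for squarefree $\ell$ relates to values at $A$) I would bound. Here I expect to use that for squarefree $\ell \mid A$, by the Hecke relation $|\lambda_f(A/\ell)\lambda_f(\ell)| \leq \sum_{d \mid (\ell, A/\ell)} |\lambda_f(A\ell/d^2)| \leq d(\ell)^{1/2} \lambda_f^*(A)$ after extending the divisor sum and using $(\ell, A/\ell) \mid \ell$; summing over $\ell \mid A$ contributes another divisor-function factor. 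So $\sum_{\ell \mid A} |\lambda_f(A/\ell)\lambda_f(\ell)| \ll d(A)^{O(1)} \lambda_f^*(A) \ll {\lambda_f^*}^2(A) \cdot (\text{something absorbed})$; one uses $\lambda_f^*(A) \geq 1$ and the relation between $d(A)$ and $\lambda_f^*$ is not automatic, so instead I would bound $|\lambda_f(A/\ell)| \leq \lambda_f^*(A/\ell) \leq \lambda_f^*(A)$ and $|\lambda_f(\ell)| \leq \lambda_f^*(\ell) \leq \lambda_f^*(A)$ directly by parts \eqref{23)} and \eqref{b)}, giving each summand $\leq {\lambda_f^*}^2(A)$ and the number of squarefree divisors $\leq d(A)$; but $d(A)$ must then be absorbed — this is where one uses $d(A) \ll_\varepsilon A^\varepsilon \ll T^\varepsilon$ only if $A \leq T^{O(1)}$, which need not hold.

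The main obstacle, then, is handling the regime $A$ large relative to $T$, where the factor $d(A)$ cannot be absorbed into $T^\varepsilon$. The resolution is to \emph{not} bound the number of terms crudely but to keep the sum $\sum_{\ell \mid A} |\lambda_f(A/\ell)|\,|\lambda_f(\ell)|$ and control it by $d(A)^{1/2}$ times $\lambda_f^*(A)$ via Lemma~\ref{propto}\eqref{f)}: indeed for squarefree $\ell \mid A$ one has $(\ell, A/\ell) \mid \ell$ squarefree so $d((\ell, A/\ell)) \leq d(\ell)$, and Lemma~\ref{propto}\eqref{f)} gives $|\lambda_f(\ell)\lambda_f(A/\ell)| \leq d(\ell)^{1/2}\lambda_f^*(A)$ (taking $m = \ell$, $n = A/\ell$, $mn = A$). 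Wait — \eqref{f)} requires $mn$, here $\ell \cdot (A/\ell) = A$, good. Summing: $\sum_{\ell \mid A} d(\ell)^{1/2} \leq \sum_{\ell \mid A} d(\ell)^{1/2} \cdot 1$, and $\sum_{\ell \mid A} d(\ell)^{1/2} \leq (\sum_{\ell\mid A} 1)^{1/2}(\sum_{\ell \mid A} d(\ell))^{1/2} \leq d(A)^{1/2} \cdot d_3(A)^{1/2} \ll d_3(A)$. Hence $\sum_{\ell \mid A, \text{ sqfree}} |\lambda_f(\ell)\lambda_f(A/\ell)| \ll d_3(A)\,\lambda_f^*(A) \leq d_3(A)\,{\lambda_f^*}^2(A)$. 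Plugging this into the bound above yields
$$
S(T, A, \alpha) \ll_{\varepsilon, f} (1 + \omega(A))\, d_3(A)^2\, {\lambda_f^*}^2(A)\, T^{3/4 + \varepsilon},
$$
which is slightly weaker than claimed; to recover exactly $d_3(A)$ I would be more careful, bounding $d_3(\ell) \le d_3(A)$ only once and noting $d_3$ is multiplicative so the two divisor factors combine, or simply observing that the statement tolerates replacing one $d_3(A)$ by $d_3(A)$ after reabsorbing the other $d_3$-type factor using $d_3(A) \ll_\varepsilon A^\varepsilon$ is \emph{not} available — so the honest statement one proves is with $d_3(A)$, achieved by being careful that in \eqref{M1} the factor $d_3(\ell)$ from Lemma~\ref{key} and the factor $\sum d(\ell)^{1/2}$ together give $d_3(A)$ via $\sum_{\ell \mid A} d_3(\ell) d(\ell)^{1/2} / (\text{stuff}) \ll d_3(A)$, which holds because $\sum_{\ell \mid A}$ of a multiplicative function bounded by $d_3(\ell)d(\ell)^{1/2} \ll d_3(\ell)^{3/2}$ is $\ll d(A)^{O(1)}$ — this circularity is exactly the subtle point, and I would resolve it by instead proving the lemma by the same induction on $T$ as Lemma~\ref{key}, writing $S(T,A,\alpha)$ via \eqref{M1}, bounding the $\ell = 1$ term by Lemma~\ref{Miller} as $\ll |\lambda_f(A)| T^{3/4+\varepsilon} \le {\lambda_f^*}^2(A) T^{3/4+\varepsilon}$ and the $\ell > 1$ terms by the induction hypothesis at $T/\ell \le T_0$, then closing the induction via the elementary inequality $\sum_{1 < \ell \mid A} |\mu(\ell)|\,|\lambda_f(A/\ell)|\,(1+\omega(\ell))d_3(\ell){\lambda_f^*}^2(\ell)\,\ell^{-3/4} \le {\lambda_f^*}^2(A)(1+\omega(A))d_3(A) \cdot (1 - c)$ for a suitable constant, using $|\lambda_f(A/\ell)| \le \lambda_f^*(A/\ell)$, Lemma~\ref{propto}\eqref{a)}--\eqref{cold} to relate $\lambda_f^*(A/\ell)\lambda_f^*(\ell)$ to $\lambda_f^*(A)$ up to a $d(\ell)^{1/2}$ loss, and the Euler-product bound $\prod_{p \mid A}(1 + O(p^{-3/4})) \ll d_3(A)$ as in the proof of Lemma~\ref{key}. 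That induction is the clean way around the divisor-bookkeeping circularity and is the step I would expect to require the most care.
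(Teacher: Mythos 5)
Your reduction is the same one the paper uses: \eqref{M1} already writes $S(T,A,\alpha)$ in terms of $S(T/\ell,\ell,\ell\alpha)$ with $\ell$ squarefree, so Lemma \ref{key} applies termwise, and the bounds $1+\omega(\ell)\le 1+\omega(A)$, $d_3(\ell)\le d_3(A)$, $(T/\ell)^{3/4+\eps}\le T^{3/4+\eps}$ are exactly the paper's. The gap is in the final divisor-sum bookkeeping: the step you are missing is a one-line Cauchy--Schwarz applied directly to the remaining sum, namely
$$
\sum_{\ell\mid A}\,\vert\lambda_f(\ell)\vert\,\vert\lambda_f(A/\ell)\vert
\;\le\;\Bigl(\sum_{\ell\mid A}\lambda_f^2(\ell)\Bigr)^{\frac12}\Bigl(\sum_{\ell\mid A}\lambda_f^2(A/\ell)\Bigr)^{\frac12}
\;=\;{\lambda_f^*}^2(A),
$$
by the definition \eqref{defto}, since $\ell\mapsto A/\ell$ permutes the divisors of $A$. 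This is precisely why the lemma is stated with ${\lambda_f^*}^2(A)$ rather than $\lambda_f^*(A)$, and it closes your first computation with the single factor $d_3(A)$ claimed; your detour through Lemma \ref{propto} \eqref{f)} together with $\sum_{\ell\mid A}d(\ell)^{1/2}\ll d_3(A)$ only yields $(1+\omega(A))\,d_3(A)^2\,{\lambda_f^*}^2(A)\,T^{3/4+\eps}$, as you yourself observe, which is weaker than the statement (though, incidentally, it would still suffice for the type II application, where $A\ll C^2$ so $d_3(A)^2\ll (CL)^{\eps}$).

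The repair you propose --- rerunning the induction on $T$ from Lemma \ref{key} for general $A$ --- is both unnecessary and not sound as sketched. Your closing inequality requires an upper bound of the shape $\lambda_f^*(A/\ell)\,\lambda_f^*(\ell)\ll d(\ell)^{1/2}\,\lambda_f^*(A)$, but Lemma \ref{propto} provides no such reverse inequality: part \eqref{cold} goes in the opposite direction, bounding $\lambda_f^*(mn)$ \emph{above} by $d^{1/2}(m)\,d^{1/2}(n)\,\lambda_f^*(m)\,\lambda_f^*(n)$, and a reverse bound with a loss depending only on $\ell$ is neither proved in the paper nor evidently true. Likewise the factor $(1-c)$ you want for the induction is not established. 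None of this machinery is needed once the Cauchy--Schwarz step above is in place.
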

\begin{proof}  We start from \eqref{M1}. Applying \eqref{keyeq}, we obtain 
\begin{align*}
S(T,A,\alpha)& \ll \bigl(1+\omega (A)\bigr)\, d_{3}(A)\,T^{\frac{3}{4} +\varepsilon}\sum_{\ell \mid A}\vert \lambda_f (\ell)\vert\, \vert \lambda_f (A/\ell)\vert \\
&\ll \bigl(1+\omega (A)\bigr)\, d_{3}(A)\,{\lambda_f^*}^2 (A)\,T^{\frac{3}{4} +\varepsilon},
\end{align*}
by the Cauchy-Schwarz inequality and the definition \eqref{defto}.
\end{proof}


\section{The proof of Theorem \ref{mainthm1}}\label{section5}
We assume throughout the rest of the paper 
that $f$ is a Hecke-Maass cusp form of level one. There 
is no loss of generality in doing so as the space of Maass cusp forms is spanned by the Hecke forms. Recall that for such a form, the Fourier 
coefficients $\nu_f(n)$ and the Hecke eigenvalues $\lambda_f(n)$ coincide up to multiplcation by the non-zero constant $\nu_f(1)$.
We prove only the bound \eqref{144} for the sum  involving the M\" obius  function. The proof of the   bound \eqref{139}  is structurally identical and, in fact, simpler as explained in the introduction. 
 Throughout the rest of the paper, $f$ denotes a Hecke-Maass cusp form for the group ${\rm SL}(2, \mathbb{Z})$.

\subsection{Initial steps}

  Let us write
$$
T(X, \alpha) = \sum_{1 \leq n \leq X} \lambda_f(n)\mu(n) e(n \alpha).
$$
We fix a parameter $Q$ to be optimized later. Now, Dirichlet\rq{}s theorem on Diophantine Approximation ensures that
given any $\alpha \in [0,1)$, there is always a rational number $a/q$, $(a, q) =1$ such that 
\begin{equation}\label{Dirichlet}
 1\leq q \leq Q \text{ and } \bigg| \alpha - \frac{a}{q} \bigg| \leq \frac{1}{qQ}.
\end{equation}
By partial summation, we have
\begin{equation}\label{ps}
\Bigl\vert\, T(X, \alpha) \Bigr\vert \ll   \Bigl\vert\, T\biggl(X, \frac{a}{q}\biggr) \,\Bigr\vert  + \int_1^X \bigg|\biggl(\alpha - \frac{a}{q}\biggr)T\biggl(x, \frac{a}{q}\biggr)\bigg|\diff x +1
\end{equation}
We now plan a general study of the sum $T(x,a/q)$.
We first write the equality
\begin{equation}\label{split1}
T\biggl(x, \frac{a}{q}\biggr) = \sum_{b (\textnormal{mod } q)}e\biggl(\frac{ab}{q}\biggr)\sum_{\substack{n \equiv b (\textnormal{mod } q)\\n \leq x}} \lambda_f(n)\mu(n).
\end{equation}
To detect the congruence $n\equiv b\bmod q$ by Dirichlet characters, we must first ensure the coprimality
of the class and the modulus. So we introduce
\[
d=(b,q),\ b_{1}=b/d, \  q_{1}=q/d, \textit{ and } 
\]
\[\chi_{d}, \textit{ the principal character modulo } d.\]
This gives the equalities
\begin{align}
\sum_{\substack{n \equiv b (\textnormal{mod } q)\\n \leq x}} \lambda_f(n)\mu(n)
&=
\sum_{\substack{n_{1} \equiv b_{1} (\textnormal{mod } q_{1})\\n_{1} \leq x/d}} \lambda_f(dn_{1})\mu(dn_{1})\notag \\
&=\lambda_{f }(d) \mu (d) \sum_{\substack{n_{1} \equiv b_{1} (\textnormal{mod } q_{1})\\n_{1} \leq x/d}} \lambda_f(n_{1})\mu(n_{1})\chi_{d }(n_{1})\notag\\
&=\frac{\lambda_{f }(d) \mu (d)}{\varphi (q_{1})} \ \sum_{\chi (\textnormal{mod } q_{1})}\ \overline{\chi} (b_{1})\, \sum_{ n_{1} \leq x/d} \lambda_f(n_{1})\mu(n_{1})\bigl( \chi \chi_{d }\bigr)(n_{1}).
\end{align}

Since $\chi \chi_d$ is a character of modulus $d q_1$, we can apply Theorem \ref{pnt2} with $q:= dq_{1}$ to the inner sum. This gives 
\[
 \sum_{ n_{1} \leq x/d} \lambda_f(n_{1})\mu(n_{1})\bigl( \chi \chi_{d }\bigr)(n_{1}) \ll \sqrt{d q_1} \frac{X}{d} \exp\left(-c_1 \sqrt{\log (X/d)}\right).
\]
Bounding $\lambda_f(d)$ by \eqref{7/64}, we finally have 
\begin{equation}\label{bengal}
T\biggl(x, \frac{a}{q}\biggr) \ll q^{3/2}X \exp\left(-c_1 \sqrt{\log (X/q)}\right).
\end{equation}

Now the proof will proceed differently depending on the size of $q$ compared to $X$.

\subsection{Major arcs}

By \eqref{bengal}, \eqref{ps}, and \eqref{Dirichlet}, we have 
\begin{align}
T(X, \alpha) &\ll  q^{3/2} X \exp\left(-c_1\sqrt{\log (X/q)}\right)\left(1+\left|\alpha - \frac{a}{q}\right|\,X \right) \notag\\
& \ll  \sqrt{q}\, X \exp\left(-c_1\sqrt{\log (X/q)}\right)\left(q+\frac{X}{Q}\right).\label{Gauss}
\end{align}
Now we choose
\begin{equation}\label{qchoice}
Q = X\exp\left(-\frac{c_1}{3}\sqrt{\log X}\right).
\end{equation}
If 
\begin{equation}\label{smalldenom}
q \leq \frac{X}{Q} = \exp\left(\frac{c_1}{3}\sqrt{\log X}\right),
\end{equation}
then by \eqref{Gauss},
$$
T(X, \alpha) \ll X \exp\left(-\frac{c_1}{10}\sqrt{\log X}\right).
$$
Therefore, we have proved Theorem \ref{mainthm1} if $\alpha$ admits a good enough 
rational approximation $a/q$; $(a, q)=1$, satisfying (\ref{Dirichlet}) with $Q$ is as above and $q$ satisfies the bound \eqref{smalldenom}. 
On the other hand, if $\alpha$ is such that \eqref{smalldenom} is true for no rational number $a/q$; $(a, q)=1$, satisfying (\ref{Dirichlet}),
then this method does not work and we apply the Vinogradov method as explained in the next few subsections.


\subsection{Minor arcs}

 After the pioneering work of Vinogradov, Gallagher, Vaughan and others, we know how to quickly  enter into the combinatorial structure of  the functions $\Lambda$ and $\mu$. 
In our situation we use  (see \cite[Prop.13.5]{IK} for instance):
 
\begin{proposition}
Let $y, z \geq 1$. Then for any $m > \max\{y, z\}$, we have
 \begin{equation} \label{vaughan}
 \mu(m) = -\sum_{\substack{bc\mid m\\b \leq y, c \leq z}} \mu(b) \mu(c) + \sum_{\substack{bc\mid m\\ b>y, c > z}} \mu(b) \mu(c).
 \end{equation}
 \end{proposition}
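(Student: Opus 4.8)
The plan is to prove \eqref{vaughan} as a purely formal identity, deducing it from the single elementary fact that $\sum_{d\mid n}\mu(d)$ equals $1$ for $n=1$ and $0$ for $n>1$ (write $\delta$ for this indicator). Equivalently, one may work in the ring of finitely supported formal Dirichlet series and start from the algebraic identity
\[
\frac{1}{\zeta(s)} = U(s) + V(s) - U(s)V(s)\zeta(s) + \Bigl(\frac{1}{\zeta(s)} - U(s)\Bigr)\bigl(1 - V(s)\zeta(s)\bigr),
\]
where $U(s) = \sum_{b\le y}\mu(b)b^{-s}$ and $V(s) = \sum_{c\le z}\mu(c)c^{-s}$; this is verified by expanding the right-hand side and cancelling. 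Reading off the coefficient of $m^{-s}$ on both sides then yields \eqref{vaughan} up to two harmless extra terms.

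First I would record the building blocks. The coefficient of $n^{-s}$ in $U$ (resp. $V$) is $\mu(n)\mathbf{1}_{n\le y}$ (resp. $\mu(n)\mathbf{1}_{n\le z}$); the coefficient of $m^{-s}$ in $U(s)V(s)\zeta(s)$ is $\sum_{bc\mid m,\ b\le y,\ c\le z}\mu(b)\mu(c)$; the series $1/\zeta - U$ has coefficients $\mu(b)\mathbf{1}_{b>y}$; and, using $\sum_{c\mid n}\mu(c)=\delta(n)$, the series $1 - V(s)\zeta(s)$ has coefficient $\sum_{c\mid n,\ c>z}\mu(c)$ at $n^{-s}$. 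Multiplying the last two series, the coefficient of $m^{-s}$ in $(1/\zeta - U)(1 - V\zeta)$ is $\sum_{bc\mid m,\ b>y,\ c>z}\mu(b)\mu(c)$, since $c\mid m/b$ and $b\mid m$ together amount to $bc\mid m$. Assembling these gives
\[
\mu(m) = \mu(m)\mathbf{1}_{m\le y} + \mu(m)\mathbf{1}_{m\le z} - \sum_{\substack{bc\mid m\\ b\le y,\ c\le z}}\mu(b)\mu(c) + \sum_{\substack{bc\mid m\\ b> y,\ c> z}}\mu(b)\mu(c).
\]

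The final step is to invoke the hypothesis $m > \max\{y,z\}$: then $\mathbf{1}_{m\le y} = \mathbf{1}_{m\le z} = 0$, the two diagonal terms vanish, and the displayed identity is exactly \eqref{vaughan}. If one prefers to avoid generating functions, the same computation can be carried out directly: expand $\sum_{bc\mid m,\ b>y,\ c>z}\mu(b)\mu(c)$ by summing over $b$ first, split the condition $c>z$ as $\delta(m/b)$ minus the range $c\le z$, isolate the term $b=m$ (which contributes $\mu(m)$ because $m>y$), and rewrite the complementary sum over $b\le y$ using $\sum_{c\mid m}\mu(c)=\delta(m)=0$ together with $z<m$.

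There is no genuine obstacle here: the statement is an elementary combinatorial identity and the argument is pure bookkeeping. The only points requiring care are keeping the divisibility constraint $bc\mid m$ consistent through the convolutions, correctly locating the two Kronecker-delta contributions, and observing that it is precisely the hypothesis $m>\max\{y,z\}$ that allows us to discard them; none of this involves any analytic input.
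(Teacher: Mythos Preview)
Your argument is correct: the Dirichlet-series identity $1/\zeta = U + V - UV\zeta + (1/\zeta - U)(1 - V\zeta)$ expands trivially, and reading off the coefficient of $m^{-s}$ together with the hypothesis $m>\max\{y,z\}$ gives exactly \eqref{vaughan}. The paper does not supply its own proof but simply cites \cite[Prop.~13.5]{IK}, where the same generating-function derivation is carried out, so your approach matches the standard one.
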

 Accordingly, we decompose the sum $T(X, \alpha)$ as 
 \begin{equation}\label{types}
 T(X, \alpha) =- T_1 (X, \alpha) + T_2 (X, \alpha) +O (y+z),
 \end{equation}
where 
\begin{equation}\label{t1}
T_1 (X, \alpha) = \sum_{b \leq y} \mu(b) \sum_{c \leq z} \mu(c) \sum_{k \leq X/bc} \lambda_f(kbc)e(kbc \alpha),
\end{equation}
and 
\begin{equation}\label{t2}
T_2 (X, \alpha) = \sum_{b > y} \mu(b) \sum_{c > z} \mu(c) \sum_{k \leq X/bc} \lambda_f(kbc)e(kbc \alpha)
\end{equation}
are called {\it sums of type I and type II}   respectively. The parameters $y\geq 1$ and $z\geq 1$ will be chosen later optimally (they will be
 of size about
$O(X^{1/5})$).
  The error term in \eqref{types} comes from the contribution of the $m\leq \max \{y,z\}$ and is handled with the inequality \eqref{L1}.


\subsection{Type I sums}

 A direct application of Lemma \ref{GL2-N} to the inner sum of  \eqref{t1} leads to the upper bound
$$
\sum_{k \leq X/bc} \lambda_f(kbc)e(kbc \alpha) \ll_{\eps} (bc)^{2\eps}\,\lambda_f^*(bc)\left(\frac{X}{bc}\right)^{\frac{1}{2}+\eps},
$$
after using standard bounds for the arithmetical functions involved. Inserting this bound in \eqref{t1} and writing $m:=bc$ we obtain
\begin{align}
T_1 (X, \alpha) &\ll_{\eps} X^{\frac{1}{2}+\eps}(yz)^{\eps} \sum_{m \leq yz}\frac{d (m) \lambda_f^*(m) }{m^{\frac{1}{2}}}  \notag\\
 & \ll_{\eps}(Xyz)^{\frac{1}{2}+\eps}\label{type1}
\end{align}
by Theorem \ref{LauLu},  Proposition \ref{6thmoment}, and partial summation.


\subsection{Type II sum}

Now we come to the most delicate part of the proof which is the estimation of the type II sum.
 Introducing the notation 
$$
\beta_{\ell} := \sum_{\substack{b|\ell \\b>y}} \mu(b),
$$
we see that
$$
T_2 (X, \alpha)=\sum_{\ell} \beta_{\ell}\sum_{\substack{c>z\\\ell c\leq X}}\mu(c)\lambda_f(c\ell)e(\alpha c\ell)
$$
and $\beta_{\ell}$ satisfies the bound

\begin{equation}\label{divisor}
|\beta_{\ell}| \leq d(\ell).
\end{equation}

Now we introduce two parameters $L$ and $C$ which will be chosen later subject to 
\begin{equation}\label{condLC}
L > y,\,  C > z \textit{ and  }LC \leq X.
\end{equation} 
We split the sum
$T_2 (X, \alpha)$ into $O((\log X)^2)$  many dyadic pieces of the form 
$$
T_2(C, L, \alpha) = \sum_{\ell \sim L} \beta_{\ell}\sum_{\substack{c \sim C}}\mu(c)\lambda_f(c\ell)e(\alpha c\ell),
$$
where the variables $\ell $ and $c$ satisfy the extra condition
\begin{equation}\label{<X}
c\ell \leq X.
\end{equation}
This extra condition is sometimes superfluous but allows us to suppress the dependence on $X$ in the notations. By the Cauchy-Schwarz inequality we have 
 
\begin{equation}\label{type2cauchy}
\left|T_2(C, L, \alpha)\right|^2 \leq\left( \sum_{\ell \sim L} |\beta_{\ell}|^2\right) A(C, L, \alpha),
\end{equation}
where 
$$
A(C, L, \alpha) := \sum_{\ell \sim L }\,\Bigl\vert\, \sum_{c\sim C} \mu (c) \lambda_f(c\ell) e\bigl( \alpha c \ell)\,
\Bigr\vert^2,
$$
with the extra constraint \eqref{<X}.
By \eqref{divisor} and \eqref{divmoment},
\begin{equation}\label{betamoment}
\sum_{\ell \sim L} |\beta_{\ell}|^2\ \ll L (\log 2L)^3,
\end{equation}
where the implied constant is absolute. Now it remains to estimate $A(C, L, \alpha)$ and we can give a non-trivial bound as  long as $\alpha$ is not close to rationals with small denominators. Precisely we prove the following.
\begin{theorem}\label{centrall} Let $f$ be a Hecke-Maass cusp form of level one.  Suppose $\alpha$ is a real number and $a/q$ is any rational number written as a reduced fraction such that
\begin{equation}\label{Dir}
\bigl\vert \alpha -\frac{a}{q} \bigr\vert \leq \frac{1}{q^2}.
\end{equation}
Then there are absolute constants $K$ and $K' >0$, and for all $\eps >0$ a constant $C(\eps)$, such that 
\begin{align*}
A(C,L,\alpha) \leq C(\eps)\,   C^2 L^\frac{5}{6} (CL)^\eps + K' \bigl( C^\frac{3}{2}L + C^2L q^{-\frac{1}{2}}
+C^\frac{3}{2}L^\frac{1}{2}q^\frac{1}{2}\bigr) (\log (2CL))^K,
\end{align*}
uniformly for all $C$, $L$ and $X>1$.
\end{theorem}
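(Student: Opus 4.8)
The plan is to treat $A(C,L,\alpha)$ as a (squared) bilinear sum in the spirit of Vinogradov and Davenport, exhibiting along the way the symmetric square of $f$. First I would open the square,
\[
A(C,L,\alpha)=\sum_{c_1,c_2\sim C}\mu(c_1)\mu(c_2)\sum_{\substack{\ell\sim L\\ c_1\ell,\,c_2\ell\le X}}\lambda_f(c_1\ell)\,\lambda_f(c_2\ell)\,e\bigl(\alpha(c_1-c_2)\ell\bigr),
\]
and isolate the diagonal $c_1=c_2$. The diagonal is harmless: by Lemma~\ref{propto}(e) one has $|\lambda_f(c\ell)|^{2}\le{\lambda_f^*}^{2}(c)\,{\lambda_f^*}^{2}(\ell)$, and since $\sum_{d\le 2L}\lambda_f(d)^{2}/d\ll\log 2L$ by Theorem~\ref{LauLu}, it follows that $\sum_{\ell\sim L}{\lambda_f^*}^{2}(\ell)\ll L\log 2L$ and likewise $\sum_{c\sim C}{\lambda_f^*}^{2}(c)\ll C\log 2C$, so the diagonal is $\ll CL(\log(2CL))^{2}\le C^{3/2}L(\log(2CL))^{K}$.

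For the off-diagonal I would set $g=(c_1,c_2)$ and $h=c_1-c_2\ne 0$ and use the Hecke relation (Lemma~\ref{Hec1}): $\lambda_f(c_1\ell)\lambda_f(c_2\ell)=\sum_{d\mid g\ell}\lambda_f\bigl(c_1c_2\ell^{2}/d^{2}\bigr)$. Writing $d'=d/(d,g)$ (so that $d'\mid\ell$), $\ell=d'm$, and $A=c_1c_2/(d,g)^{2}\in\N$, the inner sum over $\ell$ becomes a sum, over divisors of $g$ and over $d'$, of pieces of the shape
\[
\sum_{m\le L/d'}\lambda_f\bigl(Am^{2}\bigr)\,e\bigl(\alpha hd'm\bigr)=S\bigl(L/d',A,\alpha hd'\bigr),
\]
which is exactly the quantity estimated via Miller's theorem in Lemma~\ref{Millgen}. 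The crucial point is that each piece admits two estimates: for $d'$ below a cutoff $\Delta$ one uses Miller's bound $S(T,A,\beta)\ll_{\varepsilon,f}(1+\omega(A))\,d_3(A)\,{\lambda_f^*}^{2}(A)\,T^{3/4+\varepsilon}$, which is uniform in $\beta$; for $d'>\Delta$ the $m$-sum is short, so after swapping the order of summation one extracts the oscillation of the additive character in the variable $d'$ by the classical minor-arc estimate (Lemma~\ref{IwKowp.346}), and it is here that the Diophantine hypothesis $|\alpha-a/q|\le q^{-2}$ enters, precisely as in Davenport's treatment of $\sum\mu(n)e(n\alpha)$. Summing Miller's bound over $d'\le\Delta$ contributes $\ll C^{2}L^{3/4}\Delta^{1/4}(CL)^{\varepsilon}$, while the minor-arc estimates for $d'>\Delta$, summed over $d'$ and over the $O(C)$ differences $h$ via $\sum_{n\le N}\min(V,\|\alpha n\|^{-1})\ll(NV/q+N+V+q)\log(2qN)$ (in a divisor-weighted form, and after one further Cauchy--Schwarz in the $h$-variable, which produces the factors $q^{\pm1/2}$), contribute $\ll(C^{3/2}L+C^{2}Lq^{-1/2}+C^{3/2}L^{1/2}q^{1/2})(\log(2CL))^{O(1)}$. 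The choice $\Delta\asymp L^{1/3}$ balances the two regimes and yields the main term $C^{2}L^{5/6}(CL)^{\varepsilon}$.

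Throughout, the arithmetic factors $(1+\omega(A))\,d_3(A)\,{\lambda_f^*}^{2}(A)$, together with the divisor functions generated by the gcd bookkeeping, must be summed over $c_1,c_2\sim C$. One dominates ${\lambda_f^*}^{2}\bigl(c_1c_2/(d,g)^{2}\bigr)$ by ${\lambda_f^*}^{2}(c_1)\,{\lambda_f^*}^{2}(c_2)$, up to divisor factors, by Lemma~\ref{propto}(c) and~(f), and then invokes $\sum_{c\le C}d^{A}(c)\,{\lambda_f^*}^{4}(c)\ll C(\log(2C))^{O(1)}$ from Proposition~\ref{6thmoment}, whose hypothesis is guaranteed by the sixth-moment bound in Theorem~\ref{LauLu}. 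The main obstacle, as flagged in the introduction, is exactly this control of the common divisors of the summation variables --- the reason $\lambda^{*}$ was introduced --- compounded by the fact that $\lambda_f$ is only Hecke-multiplicative, not completely multiplicative: without $\lambda^{*}$, neither the divisor sum coming from the Hecke expansion nor the passage from squarefree $A$ (Lemma~\ref{key}) to arbitrary $A$ (Lemma~\ref{Millgen}) would close. The technical heart is then the simultaneous optimization of the cutoff $\Delta$ against the minor-arc estimates, keeping all powers of logarithms and all $\varepsilon$'s uniform in $C$, $L$, $X$, and $q$.
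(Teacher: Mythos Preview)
Your outline matches the paper's proof almost step for step: open the square, handle the diagonal via Rankin--Selberg, expand the off-diagonal through the Hecke relation with the factorization $d'=d/(d,g)$ and $\ell=d'm$, split at a threshold $\Delta$, apply Lemma~\ref{Millgen} below and Lemma~\ref{IwKowp.346} (after a Cauchy--Schwarz) above, and control all arithmetic weights through $\lambda_f^*$ and Proposition~\ref{6thmoment}.

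One inconsistency should be fixed. As you have written it, your bound for the regime $d'>\Delta$ carries no $\Delta$-dependence, so there would be nothing to ``balance'' and one could take $\Delta=1$, yielding $C^{2}L^{3/4}$ rather than $C^{2}L^{5/6}$. In the paper the $M$-term of Lemma~\ref{IwKowp.346} (with $M\asymp C U\mathcal{N}$ and $\mathcal{N}\ll L/\Delta$, since $d'\sim L/\nu\geq\Delta$) produces an additional contribution $C^{2}L\,\Delta^{-1/2}(\log(2CL))^{K}$ in the large-$d'$ range; it is this term that is balanced against $C^{2}L^{3/4}\Delta^{1/4}$ at $\Delta=L^{1/3}$. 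Relatedly, the Cauchy--Schwarz is not merely ``in the $h$-variable'': after a M\"obius inversion removing the constraint $(d',\gamma/\delta)=1$ and a dyadic localization of $\gamma,c_1',c_2',u,\nu$, one sums the geometric-series bound against a weight $g(m)$ in the composite variable $m=\gamma\nu u(c_1'-c_2')$, and it is the second moment $\sum_{m}g(m)^{2}$ that is controlled by Proposition~\ref{6thmoment}. With these two clarifications your sketch coincides with the paper's argument.
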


\noindent
\textit{Remark.} To test the strength of  Theorem \ref{centrall}, we first give a trivial bound of $A(C,L,\alpha)$. By $\LL$, we 
now denote 
$$\LL:=\log 2CL \ (\ll \log 2X).
$$ 
We have
\begin{align*}
 A(C,&L,\alpha) \leq C \sum_{\ell \sim L } \sum_{c \sim C}\vert \lambda_f (c\ell) \vert^2\\
 &\ll C\,\sum_{CL<m\leq 4CL} d(m) \lambda_f^2 (m) \\
&\ll_f C^2L\,{\LL}^2,
\end{align*}
by Cauchy's inequality, \eqref{moment} and \eqref{divmoment}. Hence the theorem is useful if we have $C$, $L$ and $q$ satisfy the inequalities: $C$ and   $L\geq (CL)^\eps $ and
  ${\LL}^{ K_1} \leq q \leq (CL){\LL}^{- K_1}$, where $K_1$ is an explicit constant.
Now we give a proof of Theorem \ref{centrall}.

\begin{proof}

Throughout the proof, $K$ will denote an unspecified but effective constant the value of which may change in different occurrences.
Expanding square and inverting summations, we can write
\begin{equation}\label{scis}
A (C,L,\alpha) =\underset{c_{1},\,c_{2}\sim C}{\sum \ \sum} \,\mu (c_{1}) \mu (c_{2}) \sum_{\ell \sim L}
\lambda_f(c_{1}\ell) \,\lambda_f(c_{2}\ell)\, e\bigl( \alpha (c_{1}-c_{2})\ell \bigr),
\end{equation}
where $\ell$ satisfies the extra inequality
\begin{equation}\label{<<X}
\ell \leq \min \{X/c_{1}, \, X/c_{2}\}.
\end{equation}

 We first consider the diagonal $A^{\rm diag}(C,L,\alpha)$ corresponding to the contribution of the terms  satisfying $c_{1}=c_{2}$ 
in \eqref{scis}. The argument in the above remark shows that
there exists an absolute and positive constant $K$ such that 
\begin{equation}\label{148} 
A^{\rm diag} (C,L,\alpha) \ll CL \,\LL^K,
\end{equation}
uniformly for $\alpha$ real, $C$, $L $ and $X\geq 1$.

The off-diagonal part of the sum $A(C,L,\alpha)$ (see \eqref{scis}) is given by
$$
A^{\textnormal{offdiag}}(C,L, \alpha):=\mathop{\sum \sum}_{\substack{c_1, c_2 \sim C\\c_1 \neq c_2 }}\mu(c_1)\mu(c_2)\sum_{\ell\sim L}\lambda_f(\ell c_1)\lambda_f(\ell c_2)
e(\alpha(c_1-c_2)\ell),
$$
where $\ell$ satisfies \eqref{<<X}.
 Let  $\gamma=(c_ 1,c_{2})$.  
We apply  \eqref{H1}   with the choice  $m=c_{1}\ell$, $n=c_{2}\ell$. This gives the equality
 \begin{equation*}
A^{\textnormal{offdiag}}(C,L, \alpha) 
=\sum_{\gamma\leq 2C}\mathop{\sum \sum}_{\substack{c_1, c_2 \sim C\\c_1 \neq c_2\\(c_1,c_2)=\gamma}}\mu(c_1)\mu(c_2)\sum_{\ell\sim L}\sum_{d\mid \ell \gamma}\lambda_f({\ell}^2 c_1 c_2/d^2)
e(\alpha(c_1-c_2)\ell),
\end{equation*}
where $\ell$ satisfies \eqref{<<X}.
Let us further factorize the variables by introducing
\begin{equation*}
c'_{1}=c_{1}\gamma^{-1},
 \text{  and  }c'_{2}=c_{2}\gamma^{-1},
\end{equation*} 
and 
 \begin{equation}\label{DEC1}
 (\gamma, d):=\delta,\  d:=\delta d' \text{ and }\ell:= d' \nu.
 \end{equation}
Note also the equivalences  
$$d\mid \ell \gamma \iff
\frac{d}{(\gamma, d)}\Bigl\vert \,\frac{\gamma}{(\gamma, d)}\cdot \ell \iff \frac{d}{(\gamma, d)}\Bigl\vert \,\ell.
$$
Thus we have,
 \begin{multline}\label{125}
A^{\textnormal{offdiag}}(C,L, \alpha)=\sum_{\gamma}\mu^2 (\gamma)
\underset{\substack{1<c'_{1},\, c'_{2}\sim C\gamma^{-1}\\ (\gamma, c'_{1}c'_{2})=(c'_{1},c'_{2})=1}}{\sum\ \sum}
\mu (c'_{1}c'_{2})\,
\sum_{\delta \mid \gamma}\ \sum_{(d', \gamma\delta^{-1})=1} \\
\sum_{\nu \sim Ld'^{-1}}
\lambda_f \Bigl(\frac{c'_{1}c'_{2}\gamma^2}{\delta^2} \cdot \nu^2\Bigr) e \Bigl( \alpha\, \gamma\, d' (c'_{1}-c'_{2}) \nu 
\Bigr), 
\end{multline}
where $\nu$ satisfies the inequality
\begin{equation}\label{<<<X}
\nu\leq \min \bigl\{X/(\gamma \, c'_{1}\, d'), X/(\gamma\, c'_{2}\, d')
\bigr\}.
\end{equation}
Let $D'=D'(C,L)(<L)$ be a parameter to be fixed later. We split the sum $A^{\rm offdiag }(C,L,\alpha)$ into
 \begin{equation}\label{red}
 A^{\rm offdiag }(C,L,\alpha)= A_{<D'}^{\rm offdiag }(C,L,\alpha)+A_{\geq D'}^{\rm offdiag }(C,L,\alpha),
 \end{equation}
according as $d'<D'$ or $d'\geq D'$ in the sum \eqref{125}. 
  By Lemma \ref{Millgen}, we  obtain the upper bound
 \begin{multline}\label{lam}A_{<D'}^{\rm offdiag }(C,L,\alpha)\ll (CL)^\eps \sum_{\gamma}\mu^2 (\gamma)
\underset{\substack{1<c'_{1},\, c'_{2}\sim C\gamma^{-1}\\ (\gamma, c'_{1}c'_{2})=(c'_{1},c'_{2})=1}}{\sum\ \sum}
\mu^2 (c'_{1}c'_{2})\\
\sum_{\delta \mid \gamma}\ \sum_{\substack{(d', \gamma\delta^{-1})=1\\ d' <D'}} 
{\lambda_f^*}^2 \Bigl(\frac{c'_{1}c'_{2}\gamma^2}{\delta^2}  \Bigr) (L/d')^{\frac{3}{4}+\varepsilon}.
\end{multline} 
By Lemma \ref{propto} \eqref{b)}, we know that ${\lambda_f^*}^2 \bigl(\frac{c'_{1}c'_{2}\gamma^2}{\delta^2}  \bigr)\leq {\lambda_f^*}^2 \bigl({c'_{1}c'_{2}\gamma^2}  \bigr)$.
 Furthermore, each $c\leq 4C^2$ has $O(C^\eps)$ ways  of being written as $c=c'_{1}c'_{2}\gamma^2$, with $c'_{1}$, $c'_{2}$ squarefree and coprime. Using these remarks, we simplify \eqref{lam} into
 \begin{equation}\label{lam1}A_{<D'}^{\rm offdiag }(C,L,\alpha)\ll {D'}^\frac{1}{4}\, L^\frac{3}{4}\,(CL)^\eps \sum_{c\leq 4C^2}d(c)\, {\lambda_f^*}^2 (c).
\end{equation}
  It remains to note the inequality ${\lambda_f^*}^2(m)\leq {\lambda_f^*}^4 (m)$ to 
apply \eqref{4h} to finally deduce the following bound valid for every $\varepsilon >0$.
 \begin{equation}\label{2310} 
A^{\rm offdiag}_{{<D'}}(C,L,\alpha)\ll_{\eps}  C^2{D'}^\frac{1}{4} L^\frac{3}{4} (CL)^\eps,
\end{equation}
uniformly for $C$, $D'$,  $L$ and $X\geq 1$. The above bound is useful when $D'$ is small. When $D'$ is very close to $L$, we recover the trivial bound  
$A^{\rm offdiag} (C,L,\alpha)\ll C^2 L\LL^K$. In that situation we will benefit from the cancellation of additive 
characters in a long sum over the variable ~$d'$.

The goal now is to give an upper bound for
$A^{\rm offdiag}_{{\geq D'}}(C,L,\alpha)$. 
We start from the expressions \eqref{125}  \& \eqref{red} and rewrite   as
\begin{multline}\label{380}
A_{\geq D'}^{\rm offdiag }(C,L,\alpha) = \sum_{\gamma}\mu^2 (\gamma)
\underset{\substack{1<c'_{1},\, c'_{2}\sim C\gamma^{-1}\\ (\gamma, c'_{1}c'_{2})=(c'_{1},c'_{2})=1}}{\sum\ \sum}
\mu (c'_{1}c'_{2})\,
\sum_{\delta \mid \gamma}\ 
\\
\sum_{\nu }
\lambda_f \Bigl(\frac{c'_{1}c'_{2}\gamma^2}{\delta^2} \cdot \nu^2\Bigr) 
\sum_{\substack{(d', \gamma\delta^{-1})=1\\ d'\geq D',\, d'\sim L/\nu}} e \bigl( \alpha\, \gamma\, d' (c'_{1}-c'_{2}) \nu 
\bigr)
\end{multline}
where now $d'$ verifies the extra condition (see \eqref{<X})
\begin{equation}\label{sun<}
d'\leq \min \bigl\{X/(\gamma \, \nu\, c'_{1}), X/(\gamma\, \nu\, c'_{2})
\bigr\}.
\end{equation}
In the expression \eqref{380}, the variable $d'$ is not  smooth completely, because of the coprimality condition $(d',\gamma \delta ^{-1})=1$. Capturing the coprimality condition by the M\" obius function, we write \eqref{380} as
\begin{multline}\label{392}
A_{\geq D'}^{\rm offdiag }(C,L,\alpha) = \sum_{\gamma}\mu^2 (\gamma)
\underset{\substack{1<c'_{1},\, c'_{2}\sim C\gamma^{-1}\\ (\gamma, c'_{1}c'_{2})=(c'_{1},c'_{2})=1}}{\sum\ \sum}
\mu (c'_{1}c'_{2})\,
\sum_{\delta \mid \gamma}\ 
\\
\sum_{\nu }\ 
\lambda_f \Bigl(\frac{c'_{1}c'_{2}\gamma^2}{\delta^2} \cdot \nu^2\Bigr) \sum_{u\mid \gamma\delta ^{-1}} \mu (u)
\sum_{\substack{d'' \geq D'/u\\  d''\sim L/\nu u}} e \bigl( \alpha\, \gamma\, \nu\,(c'_{1}-c'_{2}) u d'' 
\bigr),
\end{multline}
where now \eqref{sun<} is replaced by
\begin{equation}\label{sun<<}
d''\leq \min \bigl\{X/(\gamma \, \nu\, c'_{1}\,  u), X/(\gamma\,\nu\, c'_{2}\,  u)
\bigr\}.
\end{equation}
Taking absolute 
values,  extending the summation over all $u\mid \gamma$ and changing $\delta \mapsto \gamma\delta ^{-1}$, we deduce from \eqref{392}  the inequality
\begin{multline}\label{405}
\bigl\vert\,A_{\geq D'}^{\rm offdiag }(C,L,\alpha) \,\bigr\vert \leq \sum_{\gamma}\mu^2 (\gamma)\,  
\underset{\substack{1<c'_{1},\, c'_{2}\sim C\gamma^{-1}\\ (\gamma, c'_{1}c'_{2})=(c'_{1},c'_{2})=1}}{\sum\ \sum}
\mu^2 (c'_{1}c'_{2})\, \sum_{u\mid \gamma} \mu^2 (u)\,\sum_{\delta \mid \gamma}
\\
\sum_{\nu }\ 
\Bigl\vert\,\lambda_f \bigl( {c'_{1}c'_{2} }{\delta^2}   \nu^2\bigr)\,\Bigr\vert \, \,\Bigl\vert\,
\sum_{\substack{d'' \geq D'/u\\  d''\sim L/\nu u}} e \bigl( \alpha\, \gamma\, \nu\,(c'_{1}-c'_{2}) u d'' 
\bigr)\,\Bigr\vert,
\end{multline}
with the constraint \eqref{sun<<} for the variable $d''$.
We now split the ranges of variations of  the variables $\gamma$, $c'_{1}$, $c'_{2}$,  $u$ and $\nu$
in the right hand side of \eqref{405}
into dyadic segments:
\begin{equation}\label{control10}
\gamma\sim \Gamma,\, c'_{1}\sim C'_{1},\, c'_{2}\sim C'_{2}, \, u\sim U\text{ and } \nu \sim \mathcal{N}.
\end{equation}
We denote by $A(\Gamma, C'_{1}, C'_{2}, U,\mathcal{N})$ the corresponding  contribution.
The number of these subsums is $O (\LL^5)$.
Note that we have
\begin{equation}\label{350}
\Gamma C'_{1}\asymp \Gamma C'_{2}\asymp C, \, U\leq \Gamma, \, L/\mathcal{N}  >D'/2.
\end{equation}
To condense the notations, we define
\begin{equation}\label{defm10}
m:=\gamma\nu u (c'_{1}-c'_{2}).
\end{equation}
Using the well-known bound for sums of additive characters, we have
\begin{equation}\label{3320}
A(\Gamma, C'_{1}, C'_{2}, U, \mathcal{N}) \ll
\sum_{1\leq \vert m\vert \leq M}  g(m) \min \Bigl( \frac{L}{\mathcal{N} U} ,\Vert \alpha m\Vert^{-1}\Bigr),
\end{equation}
 where 
\begin{equation}\label{MMM}
M=16\, \Gamma\, C'_{1} \,U\, \mathcal{N},\  (\asymp  CU\mathcal{N}),
\end{equation}
 $g(m)$  is the weight function
\begin{equation}\label{defg}
g(m):=\sum_{\gamma}  \, 
�\sum_{c'_{1}}\, \sum_{c'_{2}} \, \mu^2 (c'_{1}c'_{2} \gamma)
   \sum_{u\mid \gamma}\, \sum_{\delta \mid \gamma}
 \sum_{\nu}  \bigl\vert \, \lambda_f (c'_{1}c'_{2}\delta ^2 \nu^2)\bigr\vert,
\end{equation}
where   the variables $(\gamma, c'_{1}, c'_{2},   u,\nu)$ also satisfy \eqref{control10} and \eqref{defm10}.
Now we recall the classical lemma (see \cite[p.346]{IK}, for instance).

\begin{lemma}\label{IwKowp.346}  The
inequality    
$$
\sum_{\vert m\vert  \leq M} \min (N, \Vert \alpha m\Vert^{-1} ) \ll (M+N+MNq^{-1}+q)\log 2q
$$
 holds uniformly for $M$ and $N\geq 1$, $\alpha$ real, and any rational number $a/q$ satisfying \eqref{Dir}.
\end{lemma}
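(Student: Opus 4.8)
\noindent\emph{Strategy.}
The plan is to run the classical Vinogradov--Davenport argument, organizing the sum according to the rational approximation $\alpha=a/q+\beta$ with $\vert\beta\vert\le q^{-2}$ furnished by \eqref{Dir}. First I would partition the integers $m$ with $\vert m\vert\le M$ into $O(M/q+1)$ consecutive blocks, each an interval of at most $q$ integers, and reduce the lemma to the single-block estimate
$$
\sum_{m\in B}\min\bigl(N,\Vert\alpha m\Vert^{-1}\bigr)\ll N+q\log 2q,
$$
since summing this over the $O(M/q+1)$ blocks yields $(M/q+1)(N+q\log 2q)\ll(M+N+MNq^{-1}+q)\log 2q$, which is exactly the asserted bound.

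To prove the block estimate I would fix a block $B$ with left endpoint $m_B$ and, for $m\in B$, define $b=b(m)\in\{0,\dots,q-1\}$ by $am\equiv b\pmod q$; since $(a,q)=1$ and $B$ contains at most $q$ consecutive integers, the map $m\mapsto b(m)$ is injective on $B$. Setting $\gamma:=\{\beta m_B\}$, which is fixed on $B$, and $\eta_m:=\beta(m-m_B)$, one has $\vert\eta_m\vert\le q\vert\beta\vert\le 1/q$, and because $am/q$ and $\beta m_B$ differ from $b/q$ and $\gamma$ by integers,
$$
\Vert\alpha m\Vert=\Vert b/q+\gamma+\eta_m\Vert.
$$
The crux is a purely geometric fact about the $q$ numbers $\Vert b/q+\gamma\Vert$, $b=0,\dots,q-1$: the points $b/q+\gamma\bmod 1$ are $q$ points of $\R/\Z$ with consecutive gaps exactly $1/q$, so folding by $x\mapsto\Vert x\Vert$ shows that any sub-interval of $[0,1/2]$ of length $h$ contains $O(hq+1)$ of the values $\Vert b/q+\gamma\Vert$; in particular $O(1)$ of them lie in any interval of length $\le 3/q$.

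Finally I would split the block sum dyadically in $\Vert\alpha m\Vert$, using the displayed identity together with $\vert\eta_m\vert\le 1/q$. The terms with $\Vert\alpha m\Vert\le 1/q$ have $\Vert b/q+\gamma\Vert\le 2/q$, so there are $O(1)$ of them and they contribute $O(N)$ via $\min(N,\cdot)\le N$; and for each integer $\ell$ with $1\le\ell\le q/2$, a term with $\ell/q<\Vert\alpha m\Vert\le(\ell+1)/q$ has $\Vert b/q+\gamma\Vert$ lying in an interval of length $3/q$, so there are $O(1)$ such $m$, each contributing $\Vert\alpha m\Vert^{-1}<q/\ell$, whence $\sum_{1\le\ell\le q/2}q/\ell\ll q\log 2q$. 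Adding the two ranges gives the block estimate, and reassembling over blocks completes the proof. I expect the only genuinely delicate point to be the geometric claim about the sorted values $\Vert b/q+\gamma\Vert$, paired with a clean accounting of the harmless perturbation $\eta_m$; the remaining steps are elementary summation and bookkeeping.
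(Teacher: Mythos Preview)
Your argument is correct and is exactly the classical Vinogradov--Davenport block argument. Note, however, that the paper does not actually supply a proof of this lemma: it is stated as ``the classical lemma (see \cite[p.~346]{IK}, for instance)'' and then used as a black box. What you have written is precisely the standard proof one finds in that reference, so there is nothing to compare; your proposal simply fills in a proof the authors chose to cite rather than reproduce.
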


To apply Lemma \ref{IwKowp.346} to \eqref{3320}, we first apply the Cauchy-Schwarz inequality with the view to take advantage of the fact that although the size of the coefficients $g(m)$ may be difficult to control, the $\Vert\cdot \Vert_2$--norm of this  sequence can still be estimated by the results of \S 5. 
By Cauchy-Schwarz inequality we obtain
\begin{multline}\label{381}
A(\Gamma, C'_{1}, C'_{2}, U,\mathcal{N})\\
\ll \Bigl( \frac{L}{\mathcal{N} U}\Bigr)^\frac{1}{2}\cdot \Bigl(\sum_{1\leq \vert m\vert \leq M}
g^2 (m) \Bigr)^\frac{1}{2}\cdot \Bigl( M+\frac{L}{\mathcal{N} U}+\frac{LM}{q\mathcal{N} U} +q\Bigr)^\frac{1}{2}\, (\log (2q))^\frac{1}{2}.
\end{multline} 
By Lemma \ref{propto} and the coprimality conditions of the variables $c'_{1}$ and $c'_{2}$,  we get the inequalities
\begin{align*}
\bigl\vert \, \lambda_f (c'_{1}c'_{2}\delta ^2 \nu^2)\bigr\vert &\leq \lambda_f^*(c'_{1}c'_{2})  \lambda_f^*(\delta ^2\nu^2)\\
&\leq \lambda_f^* (c'_{1  })\, \lambda_f^*(c'_{2})\, \lambda_f^* (\gamma^2 \nu^2)\\
&\leq  d(\gamma \nu)\lambda_f^* (c'_{1  })\, \lambda_f^*(c'_{2})\, {\lambda_f^*}^2 (\gamma \nu).
\end{align*}
Inserting this bound into the definition \eqref{defg}, we obtain  the inequality
$$
g(m)\leq \underset{\substack{u,\, \gamma,\, \nu\\ u\gamma\nu\mid m}}{\sum\,\sum\, \sum}\, d(\gamma\,\nu)\,   \, {\lambda_f^*}^2\, (\gamma\, \nu)\,\sum_{c'_{1}}\, \lambda_f^* (c'_{1})\lambda_f^* (c'_{1}+m/(u\gamma\nu)).
$$
By the Cauchy--Schwarz inequality applied to the  sum in $c'_{1}$ (recall that we have $\vert m/(u\gamma \nu)\vert \ll C'_{2}$) and by  \eqref{4h}, we get the upper bound
\begin{align*}
g(m)&\ll C'_{1} \,\LL^K \,\underset{\substack{u,\, \gamma,\, \nu\\ u\gamma\nu\mid m}}{\sum\,\sum\, \sum}\, d(\gamma\,\nu)\,   \, {\lambda_f^*}^2\, (\gamma\, \nu)\\
&\ll {\lambda_f^*}^2(m)\, C'_{1}\, \LL^K \underset{\substack{u,\, \gamma,\, \nu\\ u\gamma\nu\mid m}}{\sum\,\sum\, \sum}\,d(\gamma \nu) \\
&\ll d^5 (m)   {\lambda_f^*}^2 (m) \,C_{1}' \LL^K,
\end{align*}
by using  Lemma \ref{propto} \eqref{b)} and trivial bound on the divisor functions. 
By the above inequality, we have
\begin{align}\label{niss}
\sum_{1\leq \vert m\vert \leq M}g^2 (m)& \ll C_{1}'^2\,\LL^K\, \sum_{1\leq \vert m\vert\leq M} d^{10}(m){\lambda_f^*}^4 (m)\nonumber\\
&\ll C_{1}'^2 M \LL^K\nonumber\\
&\ll C_{1}'^3\,U\, \Gamma \,\mathcal{N}\,\LL^K,
\end{align}
the last lines being consequences of  Proposition \ref{6thmoment}  and the definition \eqref{MMM} of $M$.
 Inserting \eqref{niss} in \eqref{381}, we get the inequality
\begin{equation}\label{488}
A(\Gamma, C'_{1}, C'_{2}, U,\mathcal{N})
\ll {C'_{1}}^\frac{3}{2}\,L^\frac{1}{2}\,\Gamma^\frac{1}{2}\cdot \Bigl( M+\frac{L}{\mathcal{N} U}+\frac{LM}{q\mathcal{N} U} +q\Bigr)^\frac{1}{2}\, \LL^K.
\end{equation}
We must take the supremum of the right hand side of \eqref{488}  under the constraints  \eqref{350} and   \eqref{MMM}. We easily obtain
\begin{align*}
A(\Gamma, C'_{1}, C'_{2}, U,\mathcal{N})
&
\ll 
\bigl( C^3 L\Gamma^{-2}\bigr)^\frac{1}{2}\Bigl( CU\mathcal{N} +L+\frac{CL}{q   } +q\Bigr)^\frac{1}{2}\, \LL^K\\
&\ll 
\bigl( C^3 L\Gamma^{-2}\bigr)^\frac{1}{2}\Bigl( CL D'^{-1}\Gamma+L+\frac{CL}{q   } +q\Bigr)^\frac{1}{2}\, \LL^K.
\end{align*}
By summing over all these subsums we have that if $\alpha$ satisfies \eqref{Dir}, then there exists an absolute constant $K>0$ such that
\begin{equation}\label{4541}
A^{\rm offdiag}_{{\geq D'}}(C,L,\alpha)\ll \bigl( C^2LD'^{-\frac{1}{2}}+ C^\frac{3}{2} L+C^2Lq^{-\frac{1}{2}}+C^\frac{3}{2}L^\frac{1}{2} q^\frac{1}{2} 
\bigr)\LL^K,
\end{equation}
uniformly for $C$, $L\geq 1$ and $1\leq D'\leq L$.

 Recall that we had divided the sum $A(C,L,\alpha)$ into
 \begin{equation}\label{decompo1}
 A(C,L,\alpha) =A^{\rm diag}(C,L,\alpha)+A^{\rm offdiag}_{<D'}(C,L,\alpha) +A^{\rm offdiag}_{\geq D'} (C,L,\alpha).
 \end{equation}
Using
Propositions \eqref{148}, \eqref{2310}  \& \eqref{4541} in \eqref{decompo1} and giving  the value $L^\frac{1}{3}$ to    the parameter $D'$  we complete
  the proof of Theorem \ref{centrall}.
\end{proof}

\subsection{The finishing touches}
By \eqref{type2cauchy}, \eqref{betamoment}, and Theorem \ref{centrall},
we have the inequalities
\begin{align*}
\left|T_2(C, L, \alpha)\right|^2  &\ll L {\LL}^3 A(C, L, \alpha)\\
&\ll_{\eps}  C^2 L^{\frac{11}{6}}(CL)^{\eps} +  \bigl( C^{\frac{3}{2}} L^2 + C^2 L^2 q^{-\frac{1}{2}} + C^{\frac{3}{2}} L^{\frac{3}{2}} q^{\frac{1}{2}} \bigr) {\LL}^K,
\end{align*}
for any $\eps>0$ and for some absolute constant $K>0$.
Therefore, we have the inequality
\begin{equation}\label{final}
T_2(X, \alpha) \ll_{\eps} y^{-\frac{1}{12}} X^{1+\eps }+\bigl(z^{-\frac{1}{4}} X +  q^{-\frac{1}{4}}X  +   q^{\frac{1}{4}}X^\frac{3}{4} \bigr) (\log X)^{K}, 
\end{equation}
by summing over the dyadic segments (see \eqref{condLC}).
Recall that here we are considering only those $\alpha$ for which any rationals $a/q$, $(a, q)=1$ satisfying \eqref{Dirichlet}, also satisfies
$
X/Q < q \leq Q,
$
where $Q = X\exp\left(-\frac{c_1}{3}\sqrt{\log X}\right)$. 
To be precise, we make the choices $
y = z = X^{\frac{1}{5}},
$
and this gives  the upper bound
 $$
T_2(X, \alpha) \ll X\exp\left(-\frac{c_1}{13}\sqrt{\log X}\right),
$$
where the implied constant is absolute. This, together with \eqref{types} and \eqref{type1}, proves Theorem \ref{mainthm1}.


\end{document}